\pdfoutput=1
\documentclass[10pt]{article}

\usepackage[a4paper,margin=1.05in]{geometry}
\usepackage[T1]{fontenc}
\usepackage[utf8]{inputenc}
\usepackage{lmodern}
\usepackage{microtype}
\usepackage{xurl}
\usepackage{amsmath,amssymb,amsfonts,amsthm}
\usepackage{mathtools}
\usepackage{enumitem}
\usepackage{tabularx}
\usepackage{graphicx}
\usepackage[sort&compress,numbers,square]{natbib}
\setcitestyle{numbers,square}
\usepackage[colorlinks=true,linkcolor=blue,citecolor=blue,urlcolor=blue]{hyperref}

\graphicspath{{fig/}}

\newcommand{\TV}{\mathrm{TV}}

\newcommand{\Stsp}{\ensuremath{\mathcal{X}}}
\newcommand{\Exsp}{\Stsp_{\rm exc}}
\newcommand{\Rgsp}{\Stsp_{\rm reg}}
\newcommand{\Zsp}{\ensuremath{\mathcal{Z}}}

\newcommand{\sgn}{\mathop{\mathrm{sgn}}}

\newcommand{\1}{\mathbb{1}}

\theoremstyle{plain}
\newtheorem{theorem}{Theorem}[section]
\newtheorem{conjecture}[theorem]{Conjecture}
\newtheorem{lemma}[theorem]{Lemma}
\newtheorem{proposition}[theorem]{Proposition}

\newtheorem{assumption}[theorem]{Assumption}
\theoremstyle{definition}
\newtheorem{definition}{Definition}

\theoremstyle{remark}
\newtheorem{remark}{Remark}

\hypersetup{
  pdftitle={Threshold-indexability of restless bandits \\ with real interval state spaces: a performance-metric verification framework and long-run average analysis},
  pdfauthor={Jose Nino-Mora},
  pdfsubject={Restless bandits; Whittle index; threshold-indexability},
  pdfkeywords={restless bandits, Whittle index, indexability, threshold policies, real-valued state spaces, long-run average criterion, partial conservation laws}
}

\title{Threshold-indexability of restless bandits \\ with real interval state spaces:
 a performance-metric verification framework and long-run average analysis}
\author{Jos\'e Ni\~no-Mora\\[0.25em]
\small Departamento de Estad\'istica, Universidad Carlos III de Madrid\\
\small C.\ Madrid, 126, Getafe, 28903, Madrid, Spain\\
\small \href{mailto:jose.nino@uc3m.es}{jose.nino@uc3m.es}\\
\small \url{https://alum.mit.edu/www/jnimora}\\
\small ORCID: 0000-0002-2172-3983}
\date{}

\begin{document}

\maketitle

\begin{abstract}
Restless multiarmed bandits are Markov decision process models for allocating a
scarce resource among projects whose states evolve under active or passive
actions. Whittle's index policy is widely used for such problems, but its
application to a given model requires both a proof of indexability and a means
of computing the index, two analytically challenging tasks. This paper develops
a performance-metric framework for proving threshold-indexability and computing
Whittle indices for binary-action projects with real interval state spaces. The
framework extends discounted partial conservation law \textup{(PCL)} methods to
a criterion-agnostic setting and works directly with reward and resource metrics
of threshold policies, rather than first proving threshold optimality and then
monotonicity of optimal thresholds in the resource price. The main theorem is a
verification and characterization result: under marginal-resource positivity and
a marginal integration-by-parts identity, threshold-indexability is equivalent
to monotonicity and continuity of the marginal productivity (MP) index,
which then equals the Whittle index. The framework is specialized to the
discrete-time long-run average criterion by a vanishing-discount transfer of
discounted threshold metrics and includes exceptional states where the MP
marginal-resource denominator vanishes, handled by continuous extension or
vanishing-discount limits. Applications to web crawling and noisy-channel
transmission recover known long-run average Whittle indices. For scalar
Kalman-filter bandits, it proves a regular-part average-cost result and reduces
the remaining indexability question to explicit exceptional-state metric-limit
conjectures.
\end{abstract}

\medskip
\noindent\textbf{Keywords:} restless bandits; Whittle index; indexability; threshold policies; real-valued state spaces; long-run average criterion; partial conservation laws

\bigskip

\section{Introduction}
\label{s:intro}

\subsection{Restless bandits and Whittle indexability}
\label{s:intro_multiproject}

Restless multiarmed bandits were introduced by \citet{whit88} as Markov
decision process \textup{(MDP)} models for allocating a scarce resource among
several stochastically evolving projects; see \citep{nmmath23} for a recent
review. In the binary-action case considered here, project \(n\) has state
\(X_{n,t}\), action \(A_{n,t}\in\{0,1\}\), with \(A_{n,t}=1\) denoting
activation, and evolves independently of the other projects conditional on its
own state--action history. The projects are coupled only through the resource
constraint, which makes exact dynamic programming high-dimensional.

Whittle's Lagrangian relaxation replaces the hard resource constraint by an
average one. If \(F_n(x_n,\pi_n)\) and \(G_n(x_n,\pi_n)\) denote the reward and
resource metrics of project \(n\), the relaxed problem maximizes total reward
subject to a total resource budget. Introducing a resource price \(\lambda\)
decomposes it into single-project objectives of the form
\(F_n(x_n,\pi_n)-\lambda G_n(x_n,\pi_n)\); for the corresponding discounted
multiproject derivation, see \cite[Online Companion, App.~A]{nmmor20}. In the critical-price
formulation used here, the mathematical core is \emph{indexability}: for each state \(y_n\), there is a unique critical price
\(\lambda_n^*(y_n)\) at which activation and passivity are both optimal; below
that price activation is optimal, and above it passivity is optimal. This
critical price is the Whittle index, and the resulting index policy activates
projects with the largest current indices, subject to the resource constraint.

\subsection{Real interval state spaces and threshold policies}
\label{s:intro_scalar_thresholds}

This paper studies projects whose state space is a real interval
\(\Stsp\subseteq\mathbb R\). Such scalar states arise in sensor scheduling,
remote estimation, opportunistic communication, web crawling, and service
systems. Examples include Kalman-filter scheduling and sensor management
\citep{lascalaMoran06,leNyetal11,danceSi15}, dynamic multichannel access
\citep{liuZhao10}, crawling of ephemeral content \citep{avraBorkar18}, and
remote estimation of continuous Gauss--Markov processes \citep{orneeSun23}.

The scalar order makes threshold policies natural: one action is used below a
threshold and the other above it, typically with activation in high states. Yet
thresholdability alone is not indexability. The prevailing proof route first
establishes threshold optimality for each fixed price \(\lambda\), usually via
a model-specific DP argument, and then proves that the optimal threshold varies
monotonically with \(\lambda\). This second step is often as delicate as the
first, especially on continuous state spaces, where optimality equations are
functional equations and boundary, unboundedness, and long-run average issues can
interact. Thus several real-state analyses either rely on model-specific
monotonicity arguments or prove indexability conditional on thresholdability;
see, for example,
\cite{liuZhao10,leNyetal11,avraBorkar18,danceSi15,fryerHarms18,chenetal22,kriouileAssaad21,orneeSun23}.

\subsection{A performance-metric PCL route}
\label{s:intro_pcl_route}

The partial conservation laws \textup{(PCLs)} approach avoids this two-step DP
route by working directly with threshold-policy performance metrics. It was
introduced for finite-state projects in \cite{nmaap01,nmmp02}, extended to
countable-state projects in \cite{nmmor06}, and later developed for discounted
real-state projects in \cite{nmmor20}. PCL identities relate neighboring
threshold policies, yield marginal reward and marginal resource metrics, and
identify their ratio as a marginal productivity \textup{(MP)} index. Under the
PCL-indexability conditions, that MP index is the Whittle index.

The present paper extends this approach in two directions. First, it formulates a
criterion-agnostic performance-metric framework for binary-action projects on
real intervals. The framework defines reward and resource metrics, Q-value and
marginal metrics, thresholdability, indexability, threshold-indexability, and
the MP index without committing to discounted or average rewards. Second, it
proves an abstract PCL verification and characterization theorem. Once
marginal-resource positivity \textup{(PCLI1)} and the marginal
integration-by-parts identity \textup{(PCLI3b)} hold, threshold-indexability is
equivalent to the MP-index monotonicity/continuity condition
\textup{(PCLI2)}. When these conditions hold, the Whittle index is the MP index
and the optimal threshold maps are precisely the generalized inverses of that
index. The theorem also treats exceptional states at which the diagonal
marginal-resource denominator vanishes; there the index is supplied by a
continuous extension or by a vanishing-discount limit.

\subsection{Long-run average extension}
\label{s:intro_avg_ext}

The main specialization is the discrete-time long-run average criterion. The
finite- and countable-state PCL theories already include average criteria
\citep{nmaap01,nmmp02,nmmor06}; the corresponding real-interval counterpart
remained unavailable. This extension is not a change of notation. Discounted threshold
metrics usually diverge at rate \((1-\beta)^{-1}\) as \(\beta\uparrow1\), so one
must separate average gains from bias terms and then transfer marginal metrics,
MP indices, and PCL identities. Moreover, in average limits the marginal
resource metric \(z\mapsto g_x(z)\) need not have locally bounded variation, so
one cannot generally use a Lebesgue--Stieltjes \textup{(LS)} integral with integrator \(\mathrm dg_x\). The
paper therefore separates the Radon--Nikodym threshold identity
\textup{(PCLI3a)}, which gives the PCL and shadow-price interpretation, from the
marginal integration-by-parts identity \textup{(PCLI3b)}, which is the sign
identity used in the threshold-indexability proof.

\subsection{Applications and scope}
\label{s:contribs}

The framework is applied to three scalar-state models. For optimal web crawling
it recovers the long-run average Whittle index of \cite{avraBorkar18} from the
discounted formulas in \cite[\S~12.1]{nmmor20}. For dynamic transmission over a
noisy channel it recovers the long-run average index of \cite{liuZhao10} from
\cite[\S~12.2]{nmmor20}. In both cases the proof is a metric-transfer argument:
discounted threshold metrics are passed to average reward, resource, bias, and
marginal metrics.

The scalar Kalman-filter bandit is treated differently. Its restless bandit
formulation goes back to \cite{lascalaMoran06}, the PCL approach to the
discrete-time scalar model was introduced in \cite{nmsv09}, and the discounted
PCL-indexability analysis was completed by \citet{danceSi19}. The present
paper proves the regular periodic-itinerary part of the average transfer and
identifies the remaining exceptional-state metric-limit conjectures. Thus the
remaining average-cost indexability problem is reduced to explicit questions aligned
with \textup{(PCLI1)}--\textup{(PCLI3)}: off-diagonal marginal-resource
positivity, continuous extension of the MP index at exceptional states, and the
corresponding singular PCL identities.

\subsection{Average-criterion MDP tools and paper organization}
\label{s:intro_lit}
\label{s:org}

For each fixed price \(\lambda\), the average-reward analysis uses standard
average-cost MDP tools: existence of gain and bias functions, average-cost
optimality equations or inequalities, stationary average-optimal policies, and
vanishing-discount convergence; see, for example, \cite[Ch.~5]{herlerLass96}
and \cite{schal93,jaskiNowak06,costaDufour12,vegaAmaya15,vegaAmaya18,feinbergetal12,feinbergLiang22}. Related structural and convex-analytic approaches
include \cite{yu22,arapostYuksel23}. For noncompact unbounded-cost models,
such as the Kalman-filter example, these DP prerequisites are separate from
the PCL metric-limit analysis.

The rest of the paper is organized as follows. Section~\ref{s:arf} develops
the abstract performance-metric framework. Section~\ref{s:pcli} states the PCL
conditions and proves the verification/characterization theorem.
Section~\ref{s:dtla} gives the vanishing-discount transfer to the long-run
average criterion. Section~\ref{s:aexamples} applies the framework to web
crawling, noisy-channel transmission, and the scalar Kalman-filter bandit.


\section{Performance-metric framework for threshold-indexability on interval state spaces}
\label{s:arf}

\subsection{Restless bandit project, performance metrics, and
\texorpdfstring{$\lambda$}{lambda}-price problems}
\label{s:mdf}

We consider a general single-project control model with a binary action and a
single resource. The project state at decision time \(t \in \mathbb{T}\) is denoted by
\(X_t\in\Stsp\), where the \emph{state space} \(\Stsp\) is a closed real
interval bounded below,
\(
\Stsp=\{x\in\mathbb R:\ell\le x\le u\},
\)
where \(\ell\in\mathbb R\), \(\ell<u\), and either \(u\in\mathbb R\) or
\(u=+\infty\). Thus \(\Stsp\) is either a compact interval or the half-line
\([\ell,\infty)\). At each decision epoch, the controller chooses a binary
action \(A_t\in\{0,1\}\), where action \(a=1\) denotes activation and action
\(a=0\) denotes passivity.

We leave open the time parameterization of decision epochs, except that time
starts at \(0\) and the horizon is infinite. Thus, decision opportunities may
arise in discrete time, at periods \(t \in \mathbb{T} = \{0,1,\ldots\}\), or in continuous time, for
example at intervention epochs. Likewise, the probabilistic dynamics are left
unspecified in this general framework.

Let \(\Pi\) denote the class of admissible project-operating policies. For each
initial state \(x\in\Stsp\) and policy \(\pi\in\Pi\), let \(F(x,\pi)\) and
\(G(x,\pi)\) denote the corresponding \emph{reward} and
\emph{resource-consumption performance metrics}. These are finite real-valued
performance quantities, typically defined as expectations or expected limits
over the state-action path \(\{(X_t,A_t)\colon t \in \mathbb{T}\}\), conditional on
\(X_0=x\). They measure, respectively, the reward earned and the resource
expended under policy \(\pi\) starting from \(x\). When the resource is time,
we refer to \(G(x,\pi)\) as the \emph{work metric}. The definitions of \(F\)
and \(G\) are model- and criterion-dependent; in particular, they may be
discounted metrics or long-run average metrics.

When resource usage is charged at price \(\lambda\) per unit, the
project's \emph{net value metric} is
\(
V_\lambda(x,\pi)
\triangleq
F(x,\pi)-\lambda G(x,\pi).
\)
The associated \emph{\(\lambda\)-price problem} is to find a policy maximizing
this net value from every initial state:
\begin{equation}
\label{eq:lpricepfg1}
P_\lambda\colon\qquad
\textup{find } \pi_\lambda^*\in\Pi \textup{ such that }
V_\lambda(x,\pi_\lambda^*)=V_\lambda^*(x)
\triangleq \sup_{\pi\in\Pi}V_\lambda(x,\pi),
\qquad x\in\Stsp .
\end{equation}
A policy solving \eqref{eq:lpricepfg1} is called
\emph{\(P_\lambda\)-optimal}, and \(V_\lambda^*(\cdot)\) is called the
project's \emph{optimal net value function}. We consider the collection of all
\(\lambda\)-price problems,
\begin{equation}
\label{eq:Pcollec}
\mathcal P
\triangleq
\{P_\lambda\colon\lambda\in\mathbb R\}.
\end{equation}

We also use one-step action--policy concatenations. For
\(a\in\{0,1\}\) and \(\pi\in\Pi\), let \(\langle a,\pi\rangle\) denote the
policy that chooses action \(a\) at time \(t=0\) and then follows policy
\(\pi\) thereafter. We assume that
\(
\langle a,\pi\rangle\in\Pi .
\)

To express one-step optimality conditions, we use Q-value notation. For each
admissible policy \(\pi\), state \(x\), and action \(a\), let
\(Q_F^\pi(x,a)\) and \(Q_G^\pi(x,a)\) denote the one-step reward and resource
\emph{Q-value metrics} associated with the criterion under consideration. These
Q-value metrics are part of the performance-metric specification of the model.
In discounted problems, they can be taken to be the literal concatenation
metrics
\(Q_F^\pi(x,a)=F(x,\langle a,\pi\rangle)\) and
\(Q_G^\pi(x,a)=G(x,\langle a,\pi\rangle)\).
In long-run average problems, where \(F\) and \(G\) may denote average reward
and resource-consumption rates, a one-time initial action generally does not
change such rates. In that case \(Q_F^\pi\) and \(Q_G^\pi\) are instead the
corresponding differential, or \emph{bias}, Q-value metrics, as specified in
the model-specific average-reward formulation.

The \emph{net Q-value metric} at price \(\lambda\) is
\(
Q_\lambda^\pi(x,a)
\triangleq
Q_F^\pi(x,a)-\lambda Q_G^\pi(x,a).
\)
Associated with the Q-value metrics, we define the \emph{marginal reward} and
\emph{marginal resource} metrics by
\begin{equation}
\label{eq:def-fg-marginal}
f(x,\pi)
\triangleq
Q_F^\pi(x,1)-Q_F^\pi(x,0),
\qquad
g(x,\pi)
\triangleq
Q_G^\pi(x,1)-Q_G^\pi(x,0).
\end{equation}
Thus \(f(x,\pi)\) and \(g(x,\pi)\) measure the marginal changes in reward and
resource consumption obtained by choosing the active action initially rather
than the passive action, while following policy \(\pi\) thereafter, in the
Q-value sense appropriate to the chosen performance criterion.

For each price \(\lambda\), the associated \emph{marginal net value metric} is
\begin{equation}
\label{eq:def-v-marginal}
v_\lambda(x,\pi)
\triangleq
f(x,\pi)-\lambda g(x,\pi)
=
Q_\lambda^\pi(x,1)-Q_\lambda^\pi(x,0).
\end{equation}

We assume that each \(\lambda\)-price problem admits an optimal
\emph{stationary deterministic} \textup{(SD)} policy in \(\Pi\). For an SD
policy \(\pi\), we write \(\pi(x)\in\{0,1\}\) for the action prescribed in
state \(x\). We further assume that \(P_\lambda\)-optimal actions are
characterized by Q-value inequalities, as stated next. Throughout the paper,
``iff'' means ``if and only if''.

\begin{assumption}[Existence of optimal SD policies and optimal action characterization]
\label{ass:excharcosdp}
For each price \(\lambda\in\mathbb R\):
\begin{enumerate}[label=\textup{(\roman*)},leftmargin=*]
\item
There exists an SD \(P_\lambda\)-optimal policy in \(\Pi\).

\item
There exist optimal net Q-value functions
\(
Q_\lambda^*(\cdot,a)\colon\Stsp\to\mathbb R\) for
\(a\in\{0,1\},\)
such that the sign of
\(
v_\lambda^*(x)
\triangleq
Q_\lambda^*(x,1)-Q_\lambda^*(x,0)
\)
characterizes \(P_\lambda\)-optimal actions:
in state \(x\), action \(a=1\) is optimal iff \(v_\lambda^*(x)\ge0\), and
action \(a=0\) is optimal iff \(v_\lambda^*(x)\le0\). Moreover, any admissible
SD policy that selects, in every state, an action characterized as
\(P_\lambda\)-optimal by these inequalities is \(P_\lambda\)-optimal.

\item
An admissible SD policy \(\pi^*\) is \(P_\lambda\)-optimal iff, for every
\(x\in\Stsp\),
\[
v_\lambda(x,\pi^*)\ge0
\quad\textup{whenever}\quad
\pi^*(x)=1,
\qquad
v_\lambda(x,\pi^*)\le0
\quad\textup{whenever}\quad
\pi^*(x)=0.
\]
In that case, the functions \(Q_\lambda^*(\cdot,a)\) in \textup{(ii)} may be
chosen as
\(Q_\lambda^*(x,a) \triangleq Q_\lambda^{\pi^*}(x,a),\)
and hence
\(v_\lambda^*(x)=v_\lambda(x,\pi^*).\)
\end{enumerate}
\end{assumption}

\begin{remark}
\label{re:excharcosdp}
\begin{enumerate}[label=\textup{(\roman*)},leftmargin=*]
\item The role of Assumption~\ref{ass:excharcosdp} is to ensure that optimality in
each \(\lambda\)-price problem can be checked through the signs of the
corresponding marginal net Q-value metrics. In discounted models it is typically verified
from the discounted optimality equation. In average-reward models it is
verified from the appropriate optimality equation or inequality,
with \(Q_\lambda^\pi\) interpreted as a bias Q-value metric.
\item By Assumption~\ref{ass:excharcosdp}\textup{(ii)}, both actions are
\(P_\lambda\)-optimal in state \(x\) iff
\begin{equation}
\label{eq:bothactopt}
v_\lambda^*(x)=0.
\end{equation}
Equivalently, under an SD \(P_\lambda\)-optimal continuation policy
\(\pi^*\), Assumption~\ref{ass:excharcosdp}\textup{(iii)} allows choosing
\[
Q_\lambda^*(x,a)
=
Q_\lambda^{\pi^*}(x,a),
\qquad a\in\{0,1\},
\]
so that both initial actions have the same net Q-value:
\(
Q_\lambda^{\pi^*}(x,0)
=
Q_\lambda^{\pi^*}(x,1).
\)
For fixed \(x\), any solution of \eqref{eq:bothactopt}, viewed as an
\emph{indifference equation} in \(\lambda\), is precisely a price
at which both actions are optimal in state \(x\).
\end{enumerate}
\end{remark}

\subsection{Indexability}
\label{s:indxb}

We now formalize \emph{indexability} \citep{whit88} in this framework.
It describes how the optimal actions in each fixed state vary with
the resource price. Once the index map is known, the optimal action in state
\(x\) at price \(\lambda\) is obtained by comparing \(\lambda\) with the index
value at \(x\). Thus indexability gives an explicit solution of the whole price problem
collection \(\mathcal P\).

For a given price \(\lambda\), define the \(P_\lambda\)-optimal active and
passive regions by
\begin{equation}
\label{eq:Sstarlam}
S_\lambda^{*,1}
\triangleq
\{x\in\Stsp\colon v_\lambda^*(x)\ge0\},
\qquad
S_\lambda^{*,0}
\triangleq
\{x\in\Stsp\colon v_\lambda^*(x)\le0\}.
\end{equation}
These regions need not be disjoint. Their intersection is the set of states at
which both actions are \(P_\lambda\)-optimal.

In the sequel, \(\sgn(\cdot)\) denotes the sign function
\(\sgn\colon\mathbb R\to\{-1,0,1\}\), defined by
\[
\sgn(y)
\triangleq
\begin{cases}
-1, & y<0,\\
0, & y=0,\\
1, & y>0.
\end{cases}
\]

We use the critical-price formulation of indexability in \cite{nmmor20}, where each state has a
finite unique indifference price.

\begin{definition}[Indexability]
\label{def:indx}
The project is \emph{indexable} if there exists a map
\(\lambda^*\colon\Stsp\to\mathbb R\), called  its \emph{Whittle index}, such that
\begin{equation}
\label{eq:indxsigndef}
\sgn\bigl(v_\lambda^*(x)\bigr)
=
\sgn\bigl(\lambda^*(x)-\lambda\bigr),
\qquad x\in\Stsp,\ \lambda\in\mathbb R.
\end{equation}
\end{definition}

\begin{remark}
\label{re:indx}
\begin{enumerate}[label=\textup{(\roman*)},leftmargin=*]
\item
Definition~\ref{def:indx} implies the following price monotonicity at each fixed
state \(x\): action \(a=1\) is uniquely optimal iff
\(\lambda<\lambda^*(x)\), action \(a=0\) is uniquely optimal iff
\(\lambda>\lambda^*(x)\), and both actions are optimal iff
\(\lambda=\lambda^*(x)\). Thus the optimal action is nonincreasing in the
resource price.

\item
Consequently, for each fixed state \(x\), the indifference equation
\eqref{eq:bothactopt}, viewed as an equation in \(\lambda\), has the unique
solution \(\lambda=\lambda^*(x)\). Thus the Whittle index is the unique
resource price at which the controller is indifferent between the active and
passive actions in state \(x\).

\item
Definition~\ref{def:indx} does not impose monotonicity of optimal actions with
respect to the state \(x\). State monotonicity, and hence optimal threshold
structure, is a separate property considered below.

\item
For an indexable project,
\[
S_\lambda^{*,1}
=
\{x\in\Stsp\colon\lambda^*(x)\ge\lambda\},
\qquad
S_\lambda^{*,0}
=
\{x\in\Stsp\colon \lambda^*(x)\le\lambda\}.
\]
Hence, as \(\lambda\) increases from \(-\infty\) to \(+\infty\), the optimal
active region contracts from \(\Stsp\) to \(\varnothing\), while the optimal
passive region expands from \(\varnothing\) to \(\Stsp\).

\item
Definition~\ref{def:indx} is the sign-function formulation of indexability
considered in \cite{nmmor20} for the discounted criterion. It corresponds to
Whittle's \cite{whit88} monotone-passive-region formulation in the common case where, for
each state, the active--passive indifference price is unique. In the present
formulation, increasing \(\lambda\) penalizes resource usage and therefore
makes the passive action more attractive.

\item
The index map satisfying \eqref{eq:indxsigndef}, if it exists, is unique.
Indeed, if two finite values \(\lambda_1^*(x)<\lambda_2^*(x)\) gave the same
sign characterization at state \(x\), then choosing \(\lambda\) strictly
between them would give different signs.
\end{enumerate}
\end{remark}

\subsection{Thresholdability and threshold-indexability}
\label{s:assub}

Following \cite{nmmor20}, we now focus on projects whose optimal actions 
have a threshold structure in the state variable. In such cases, for each price
\(\lambda\), optimal actions can be characterized by a threshold separating a
lower passive region from an upper active region.
We thus fix the convention that activation is associated with high states; models
with the reverse ordering can be handled by reversing the state order.

We use two threshold-policy conventions, depending on whether the active action
is taken at the threshold. For \(z\in\mathbb R\cup\{\infty\}\), the passive-at-threshold
\emph{\(z\)-policy} has active and passive regions
\[
S^1(z)
\triangleq
\{x\in\Stsp\colon x>z\},
\qquad
S^0(z)
\triangleq
\{x\in\Stsp\colon x\le z\}.
\]
If \(z=\infty\), these inequalities are interpreted in the extended-real sense,
so that
\(S^1(\infty)=\varnothing\) and \(S^0(\infty)=\Stsp\).

For finite \(z\), the active-at-threshold \emph{\(z^{\scriptscriptstyle -}\)-policy} has active and passive regions
\[
S^1(z^{\scriptscriptstyle -})
\triangleq
\{x\in\Stsp\colon x\ge z\},
\qquad
S^0(z^{\scriptscriptstyle -})
\triangleq
\{x\in\Stsp\colon x<z\}.
\]
The \(z\)- and \(z^{\scriptscriptstyle -}\)-policies differ only at the
threshold state \(x=z\), and only when \(z\in\Stsp\).

We write \(\Zsp\) for the \emph{admissible threshold set}. We assume throughout
that all finite-threshold policies introduced above are admissible. The only
threshold policy whose admissibility may fail in the unbounded state case is
the all-passive policy, represented by \(z=\infty\). For example, under the
long-run average criterion on an unbounded state space, the all-passive policy
may be unstable or may fail to yield finite, well-defined reward and resource
metrics. Thus
\[
\Zsp
\triangleq
\begin{cases}
\mathbb R\cup\{\infty\},
& \textup{if } u=\infty \textup{ and the all-passive policy is admissible},\\[1mm]
\mathbb R,
& \textup{otherwise}.
\end{cases}
\]

The all-active policy is represented by any threshold \(z<\ell\) under the
\(z\)-policy, and by any threshold \(z\le\ell\) under the
\(z^{\scriptscriptstyle -}\)-policy. When \(u<\infty\), the all-passive policy
is represented by any threshold \(z\ge u\) under the \(z\)-policy, and by any
threshold \(z>u\) under the \(z^{\scriptscriptstyle -}\)-policy. When
\(u=\infty\), no finite threshold represents the all-passive policy; in that
case it is represented by \(z=\infty\), provided that \(\infty\in\Zsp\).

For \(z\in\Zsp\), we write \(F(x,z)\) and \(G(x,z)\) for the reward and
resource metrics of the \(z\)-policy. For finite \(z\), we write
\(F(x,z^{\scriptscriptstyle -})\) and \(G(x,z^{\scriptscriptstyle -})\) for
the corresponding metrics of the \(z^{\scriptscriptstyle -}\)-policy. The same
substitution convention applies to Q-value, marginal, and net metrics.

For convenience, we use the initial state as a subscript for threshold-policy
metrics:
\[
        F_x(z)\triangleq F(x,z),
        \qquad
        G_x(z)\triangleq G(x,z),
\]
and define the threshold-policy net metric
\[
        V_{\lambda,x}(z)
        \triangleq
        V_\lambda(x,z)
        =
        F_x(z)-\lambda G_x(z).
\]
Similarly, for marginal metrics,
\[
        f_x(z)\triangleq f(x,z),
        \qquad
        g_x(z)\triangleq g(x,z),
        \qquad
        v_{\lambda,x}(z)
        \triangleq
        v_\lambda(x,z)
        =
        f_x(z)-\lambda g_x(z).
\]
For finite \(z\), the same notation with \(z^{\scriptscriptstyle -}\) denotes
the corresponding quantities under the active-at-threshold policy. Under the
regularity assumptions imposed below, the notation with
\(z^{\scriptscriptstyle -}\) agrees with the ordinary left limit in the
threshold parameter.

We now formulate thresholdability. The definition requires that, for each
price \(\lambda\), there be a threshold \(z^*(\lambda)\in\Zsp\) such that the
passive action is optimal throughout the lower region and the active action is
optimal throughout the upper region. If the threshold itself is a state, both
actions are optimal there. The definition does not rule out additional
indifference states away from the selected threshold; this flexibility is
important because the MP index need only be nondecreasing, and may have flat
portions.

\begin{definition}[Thresholdability]
\label{def:potp}
The project is \emph{thresholdable} if there exists a map
\(z^*\colon\mathbb R\to\Zsp\) such that, for every \(x\in\Stsp\) and
\(\lambda\in\mathbb R\),
\begin{equation}
\label{eq:thrlsigndef}
\begin{split}
x\le z^*(\lambda)
&\quad\Longrightarrow\quad
v_\lambda^*(x)\le0,\\
x\ge z^*(\lambda)
&\quad\Longrightarrow\quad
v_\lambda^*(x)\ge0.
\end{split}
\end{equation}
If \(z^*(\lambda)=\infty\), the inequalities are interpreted in the
extended-real order; in particular, the second implication is then vacuous. We
call any such \(z^*(\cdot)\) an \emph{optimal threshold map}.
\end{definition}

\begin{remark}[Equivalent characterization of thresholdability]
\label{re:thr_equiv}
\begin{enumerate}[label=\textup{(\roman*)},leftmargin=*]
\item
If the project is thresholdable and \(z^*(\cdot)\) is an optimal threshold map,
then, for every \(\lambda\in\mathbb R\), the \(z^*(\lambda)\)-policy is
\(P_\lambda\)-optimal and
\[
V_\lambda^*(x)
=
V_{\lambda,x}(z^*(\lambda)),
\qquad x\in\Stsp.
\]
Moreover, if \(z^*(\lambda)\in\mathbb R\), then the
\(z^*(\lambda)^{\scriptscriptstyle -}\)-policy is also \(P_\lambda\)-optimal
and
\[
V_\lambda^*(x)
=
V_{\lambda,x}(z^*(\lambda)^{\scriptscriptstyle -}),
\qquad x\in\Stsp.
\]

\item
Equivalently, the project is thresholdable iff, for every
\(\lambda\in\mathbb R\), there exists \(z\in\Zsp\) such that the \(z\)-policy is
\(P_\lambda\)-optimal and, whenever \(z\in\mathbb R\), the
\(z^{\scriptscriptstyle -}\)-policy is also \(P_\lambda\)-optimal.

\item
Thresholdability does not imply that the optimal active and passive regions
\(S_\lambda^{*,1}\) and \(S_\lambda^{*,0}\) are intervals. It only ensures the
existence of a threshold at which the lower region can be made passive and the
upper region active optimally. Additional indifference states may still occur
away from the threshold.
\end{enumerate}
\end{remark}

The two properties just defined address different aspects of the
single-project price problems. Indexability is a price-comparative-statics
property: at each state, there is a critical resource price that determines
whether activation or passivity is optimal. Thresholdability is a
state-monotonicity property: for each price, at least one optimal stationary
policy has threshold form. The central property herein is
their conjunction. In that case, the whole family of price problems is solved
by threshold policies, and the thresholds are ordered consistently by the
Whittle index.

\begin{definition}[Threshold-indexability]
\label{def:indxtp}
The project is \emph{threshold-indexable} if it is thresholdable and indexable.
\end{definition}

\subsection{The MP metric and the MP index}
\label{s:mrrum}

We use the threshold-policy notation introduced above: \(F_x(z),G_x(z)\) for
performance metrics, \(f_x(z),g_x(z)\) for marginal metrics, and
\(V_{\lambda,x}(z)\), \(v_{\lambda,x}(z)\) for their corresponding net
metrics.

Whenever \(g_x(z)>0\), define the \emph{marginal productivity} \textup{(MP)}
\emph{metric} by
\begin{equation}
\label{eq:mpm1}
m_x(z)
\triangleq
\frac{f_x(z)}{g_x(z)}.
\end{equation}
Similarly, for finite \(z\in\mathbb R\), whenever
\(g_x(z^{\scriptscriptstyle -})>0\), define
\(m_x(z^{\scriptscriptstyle -}) \triangleq
f_x(z^{\scriptscriptstyle -})/g_x(z^{\scriptscriptstyle -}).
\)

If \(g_x(x)>0\) for every \(x\in\Stsp\), the project's \emph{MP index} is the
diagonal ratio
\begin{equation}
\label{eq:phimpx}
m(x)
\triangleq
m_x(x)
=
\frac{f_x(x)}{g_x(x)},
\qquad x\in\Stsp .
\end{equation}

\begin{remark}
\label{re:mpm}
The MP metric \(m_x(z)\) is explicitly defined in terms of threshold-policy
marginal metrics, while the MP index \(m(x)\) is its diagonal restriction. This
contrasts with the Whittle index, which is defined implicitly through the
optimality relations \eqref{eq:bothactopt} and \eqref{eq:indxsigndef}. One of
the main messages of the paper is that, under suitable structural conditions,
the MP index and the Whittle index coincide.
\end{remark}

In some settings, the diagonal positivity condition \(g_x(x)>0\) may fail. As
illustrated by the Kalman-filter model under the long-run average criterion in
\S\ref{s:kalmanavg}, it may happen that
\(g_x(x)=0\)
on a subset \(\Exsp\subset\Stsp\) of \emph{exceptional states}, while
\(g_x(x)>0\) holds on the \emph{regular-state set}
\(
\Rgsp
\triangleq
\Stsp\setminus\Exsp .
\)
At exceptional states the diagonal ratio in \eqref{eq:phimpx} is not available.
Nevertheless, the appropriate index value may still be recovered as a
vanishing-discount limit of discounted indices or by continuous extension from
nearby regular states.

This motivates a relaxation of the diagonal positivity requirement $g_x(x) > 0$. 
When exceptional states are present, we call \(m\) an \emph{extended MP index}
if it agrees with the ratio \eqref{eq:phimpx} on
\(\Rgsp\) and is assigned values on \(\Exsp\) by a specified
extension rule, such as continuous extension from \(\Rgsp\) or a
vanishing-discount limit of discounted MP indices.

\section{PCL-indexability conditions and threshold-indexability verification theorem}
\label{s:pcli}

We continue to work under the abstract framework of \S\ref{s:arf}.

\subsection{Lebesgue--Stieltjes conventions and regularity of performance metrics}
\label{s:ls-prelim}

We first fix the analytic conventions used in the PCL conditions. All
functions in this subsection are real-valued and all measures are Borel
measures on \(\mathbb R\). For a right-continuous function
\(h\colon\mathbb R\to\mathbb R\), write
\(h(z-)\triangleq\lim_{y\uparrow z}h(y)\), whenever the limit exists, and
\(\Delta h(z)\triangleq h(z)-h(z-)\). The function \(h\) is
\emph{c\`adl\`ag} if it is right-continuous and has finite left limits.
For an interval \(I\subseteq\mathbb R\), its \emph{total variation} is
\[
\TV_I(h)
\triangleq
\sup\left\{
\sum_{i=1}^{n}|h(z_i)-h(z_{i-1})|:
 z_0<z_1<\cdots<z_n,\ z_i\in I
\right\}.
\]
We say that \(h\) has \emph{bounded variation} on \(I\) if
\(\TV_I(h)<\infty\), and \emph{locally bounded variation} if this holds on every
compact interval.

If \(h\) is nondecreasing and right-continuous, the set function
\begin{equation}
\label{eq:ls-increments}
\mu_h((z_1,z_2])=h(z_2)-h(z_1),
\qquad z_1<z_2,
\end{equation}
extends uniquely to a locally finite nonnegative
\emph{Lebesgue--Stieltjes \textup{(LS)} measure} on
\(\mathcal B(\mathbb R)\). If \(h\) is c\`adl\`ag and has locally bounded
variation, its local Jordan decomposition induces a unique locally finite
signed LS measure \(\mu_h\) satisfying \eqref{eq:ls-increments}; see, e.g.,
\cite[\S X.6]{doob94}. We denote its total variation measure by
\(|\mu_h|\). Thus \(\mu_h(\{z\})=\Delta h(z)\), and, for a measurable
\(\varphi\),
\[
\int_B\varphi(z)\,\mathrm dh(z)
\triangleq
\int_B\varphi(z)\,\mu_h(dz),
\qquad
\int_B|\varphi(z)|\,|\mu_h|(dz)<\infty .
\]
In particular,
\(h(z_2)-h(z_1)=\int_{(z_1,z_2]}1\,\mathrm dh(z)\).

We also use Radon--Nikodym derivatives of signed LS measures. If \(\mu\) is
a locally finite signed measure and \(\nu\) a locally finite nonnegative
measure, \(\mu\ll\nu\) means that \(\nu(B)=0\) implies \(\mu(B)=0\). Since
locally finite Borel measures on \(\mathbb R\) are \(\sigma\)-finite,
\(\mu\ll\nu\) implies the existence of a measurable density
\(\mathrm d\mu/\mathrm d\nu\), unique \(\nu\)-a.e. When \(h\) and \(q\) are
c\`adl\`ag functions of locally bounded variation, the notation
\(\mathrm dh(z)=\varphi(z)\,\mathrm dq(z)\) means the signed-measure
identity
\[
\mu_h(B)=\int_B\varphi(z)\,\mu_q(dz),
\qquad B\in\mathcal B(\mathbb R),
\]
with integrability understood with respect to \(|\mu_q|\). Hence, for
finite \(z_1<z_2\),
\begin{equation}
\label{eq:hz1z2int}
h(z_2)-h(z_1)
=
\int_{(z_1,z_2]}\varphi(z)\,\mathrm dq(z).
\end{equation}

The resource threshold metric \(G_x(z)\) is assumed below to be
nonincreasing in the threshold. Hence \(\mu_{G_x}=-\mu_{-G_x}\), where
\(\mu_{-G_x}\) is the nonnegative LS measure generated by the resource
saved when the threshold is raised. Thus
\(\mathrm dh=\varphi\,\mathrm dG_x\) is equivalently
\(\mathrm dh=-\varphi\,\mathrm d[-G_x]\); at a downward jump of \(G_x\),
\(\mu_{-G_x}(\{z\})=G_x(z-)-G_x(z)\).

Finite-threshold metrics are viewed as functions of \(z\in\mathbb R\), with
constant continuation outside the effective state range. Thus, for
\(H\in\{F,G,f,g\}\), set \(H_x(z)=H_x(\ell^{\scriptscriptstyle -})\) for
\(z<\ell\), and, if \(u<\infty\), set \(H_x(z)=H_x(u)\) for \(z\ge u\). If
\(u=\infty\) and \(\infty\in\Zsp\), values at \(z=\infty\) are interpreted
as one-sided limits of finite-threshold metrics, whenever those limits
exist.

\begin{assumption}[Regularity of threshold performance metrics]
\label{ass:regFG}
For each initial state \(x\in\Stsp\):
\begin{enumerate}[label=\textup{(\roman*)},leftmargin=*]
\item
The threshold performance metrics \(F_x,G_x\colon\mathbb R\to\mathbb R\)
are c\`adl\`ag and have locally bounded variation. Their left limits are the
active-at-threshold metrics:
\(F_x(z^{\scriptscriptstyle -})=F_x(z-)\) and
\(G_x(z^{\scriptscriptstyle -})=G_x(z-)\). If \(\infty\in\Zsp\), then
\(F_x(\infty)=\lim_{z\to\infty}F_x(z)\) and
\(G_x(\infty)=\lim_{z\to\infty}G_x(z)\), with finite limits.

\item
The resource metric \(G_x(z)\) is nonincreasing in \(z\) and is not
identically constant on \(\mathbb R\).
\end{enumerate}
\end{assumption}

\begin{assumption}[Regularity of marginal threshold metrics]
\label{ass:regfg}
For each state \(x\in\Stsp\), the marginal threshold metrics
\(f_x,g_x\colon\mathbb R\to\mathbb R\) are c\`adl\`ag. Their left limits are
the active-at-threshold metrics:
\(f_x(z^{\scriptscriptstyle -})=f_x(z-)\) and
\(g_x(z^{\scriptscriptstyle -})=g_x(z-)\). If \(\infty\in\Zsp\), then
\(f_x(\infty)=\lim_{z\to\infty}f_x(z)\) and
\(g_x(\infty)=\lim_{z\to\infty}g_x(z)\), with finite limits.
\end{assumption}

Under Assumption~\ref{ass:regFG}, the threshold PCL identity will be written
in the signed form
\begin{equation}
\label{eq:pcli3-signed-convention}
F_x(z_2)-F_x(z_1)
=
\int_{(z_1,z_2]}m(z)\,\mathrm dG_x(z),
\qquad z_1<z_2,
\end{equation}
or equivalently \(\mathrm dF_x(z)=m(z)\,\mathrm dG_x(z)\). Because
\(G_x\) is nonincreasing, the same identity has density \(-m\) with respect
to the positive resource-decrement measure \(\mu_{-G_x}\). At an atom,
\eqref{eq:pcli3-signed-convention} gives
\[
F_x(z)-F_x(z-)=m(z)\bigl(G_x(z)-G_x(z-)\bigr).
\]
Therefore, if \(G_x(z-)-G_x(z)>0\),
\begin{equation}
\label{eq:pcl-atom-slope}
m(z)
=
\frac{F_x(z-)-F_x(z)}{G_x(z-)-G_x(z)}
=
\frac{F_x(z^{\scriptscriptstyle -})-F_x(z)}
     {G_x(z^{\scriptscriptstyle -})-G_x(z)}.
\end{equation}
Thus \(m(z)\) is the reward lost per unit of resource saved when the
threshold is raised through \(z\), or, equivalently, the local resource price
that balances the two threshold conventions.

We shall also use the LS integration-by-parts consequence of
\eqref{eq:pcli3-signed-convention}. If \(m\) is continuous and
nondecreasing, then \(\mathrm dm\) is a nonnegative nonatomic LS measure, and
for finite \(z_1<z_2\),
\begin{equation}
\label{eq:nonmarginal-ibp-prelim}
\bigl[F_x(z_2)-m(z_2)G_x(z_2)\bigr]
-
\bigl[F_x(z_1)-m(z_1)G_x(z_1)\bigr]
=
-\int_{(z_1,z_2]}G_x(z)\,\mathrm dm(z),
\end{equation}
by the standard LS integration-by-parts formula
\cite[Theorem~6.2.2]{cartvanBrunt00}. The expanded notation is deliberate:
the coefficient in each bracket is the threshold-dependent value \(m(z)\),
not a fixed price \(\lambda\).

\begin{remark}
\label{re:structural}
The two regularity requirements used below are intentionally asymmetric.
Assumption~\ref{ass:regFG} is needed because the threshold PCL identity
\textup{(PCLI3a)} is an LS identity,
\(\mathrm dF_x=m\,\mathrm dG_x\). In contrast, no bounded-variation
assumption is imposed on \(f_x\) or \(g_x\): in \textup{(PCLI3b)}, \(g_x\)
is only an integrand with respect to \(\mathrm dm\), not an LS integrator.
This distinction is essential in long-run average scalar-state models, where
\(z\mapsto g_x(z)\) may fail to have locally bounded variation.
\end{remark}

\subsection{PCL-indexability conditions}
\label{s:mpmth}

We now state the PCL-indexability conditions. The MP metric is
\(m_x(z)=f_x(z)/g_x(z)\) when \(g_x(z)>0\). The unsubscripted function
\(m\) denotes the candidate MP index: on regular states it is the diagonal
ratio \(m_x(x)\), and at exceptional states it is supplied by continuous
extension. Relative to the discounted framework of \cite{nmmor20}, the third
condition is split into the Radon--Nikodym threshold identity
\textup{(PCLI3a)} and the marginal integration-by-parts identity
\textup{(PCLI3b)}. The former gives the PCL/shadow-price interpretation of
\(m\); the latter is the sign identity used in the verification theorem.

\begin{definition}[PCL-indexability with possible exceptional states]
\label{def:pcli}
The project is \emph{PCL-indexable} with respect to threshold policies if
there exists a Borel set \(\Exsp\subseteq\Stsp\), possibly empty, such that
\textup{(PCLI1)}, \textup{(PCLI2)}, \textup{(PCLI3a)}, and
\textup{(PCLI3b)} below hold.

\begin{enumerate}[
label=\textup{(PCLI\arabic*)},
ref=\textup{(PCLI\arabic*)},
align=left,
labelsep=0.75em,
leftmargin=!
]

\item
The marginal resource metric is positive except possibly on the diagonal over
\(\Exsp\):
\[
        g_x(z)>0,
        \qquad
        x\in\Stsp,\ z\in\Zsp,
        \quad
        \textup{whenever either } z\neq x \textup{ or } x\notin\Exsp .
\]
At exceptional states \(x\in\Exsp\), \(g_x(x)=f_x(x)=0\).

\item
Let \(\Rgsp\triangleq\Stsp\setminus\Exsp\). There exists a nondecreasing
continuous function \(m\) on \(\Stsp\) such that
\[
        m(x)=m_x(x)=\frac{f_x(x)}{g_x(x)},
        \qquad x\in\Rgsp.
\]
For finite-threshold integrals, \(m\) is extended outside the state space by
constant continuation:
\[
        m(z)=m(\ell), \quad z<\ell,
        \qquad
        m(z)=m(u), \quad z\ge u,\ u<\infty .
\]
If \(u=\infty\) and \(\infty\notin\Zsp\), we also require
\(m(x)\to\infty\) as \(x\to\infty\). The resulting finite-threshold
extension induces a nonnegative nonatomic LS measure \(\mu_m\), denoted under
the integral sign by \(\mathrm dm\).

\item
For each initial state \(x\in\Stsp\), the following two identities hold.

\begin{enumerate}[
label=\textup{(PCLI3\alph*)},
ref=\textup{(PCLI3\alph*)},
align=left,
labelsep=0.75em,
leftmargin=*
]

\item
\emph{Radon--Nikodym identity.}
For finite \(z_1<z_2\),
\begin{equation}
\label{eq:pcli3FG}
        F_x(z_2)-F_x(z_1)
        =
        \int_{(z_1,z_2]}m(y)\,\mathrm dG_x(y).
\end{equation}
Equivalently, \(\mathrm dF_x(y)=m(y)\,\mathrm dG_x(y)\) as a signed LS
measure identity.

\item
\emph{Marginal integration-by-parts identity.}
For every finite threshold \(z\in\mathbb R\),
\begin{equation}
\label{eq:pcli3fg-ibp}
        f_x(z)-m(z)g_x(z)
        =
        \begin{cases}
        \displaystyle
        \int_{(z,x)}g_x(y)\,\mathrm dm(y), & z<x,\\[2.5ex]
        \displaystyle
        -\int_{[x,z]}g_x(y)\,\mathrm dm(y), & x\le z .
        \end{cases}
\end{equation}
If \(\infty\in\Zsp\), the second line is also understood at
\(z=\infty\) as the limiting identity
\[
        f_x(\infty)-m(\infty)g_x(\infty)
        =
        -\int_{[x,\infty)}g_x(y)\,\mathrm dm(y),
\]
whenever \(m(\infty)=\lim_{y\to\infty}m(y)<\infty\); the improper integral
is the monotone limit over \([x,b]\) as \(b\to\infty\).

\end{enumerate}
\end{enumerate}
\end{definition}

\begin{remark}
\label{re:pcliii}
Condition \textup{(PCLI3a)} identifies \(m\) as the marginal
reward/resource density along the threshold family. At a threshold atom it is
exactly the chord-slope formula \eqref{eq:pcl-atom-slope}. Condition
\textup{(PCLI3b)} links the marginal metrics to the order of the MP index;
together with \textup{(PCLI1)} and \textup{(PCLI2)}, it yields
\[
\sgn\bigl(f_x(z)-m(z)g_x(z)\bigr)
=
\sgn\bigl(m(x)-m(z)\bigr),
\]
the key statewise comparison used below. When \(\Exsp\neq\varnothing\), the
diagonal ratio \(f_x(x)/g_x(x)\) is unavailable at exceptional states; the
index value there is supplied by the continuous extension in
\textup{(PCLI2)}, often through a vanishing-discount limit. Finally,
\textup{(PCLI1)} is imposed for every admissible threshold, including
\(z=\infty\) when the all-passive policy is admissible; this boundary
positivity is used only in the generalized-inverse boundary cases.
\end{remark}

We next define generalized inverse thresholds. If \(u=\infty\) and
\(\infty\in\Zsp\), set
\(m(\infty)\triangleq\lim_{x\to\infty}m(x)\), with the limit understood in
the extended-real sense; the index remains finite-valued on \(\Stsp\). For
\(\lambda\in\mathbb R\), define
\begin{equation}
\label{eq:tslambm}
\Zsp_\lambda
\triangleq
\begin{cases}
\{z\in\Zsp:m(z)=\lambda\},
&
\textup{if } \{z\in\Zsp:m(z)=\lambda\}\neq\varnothing,
\\[1mm]
\{z\in\Zsp\colon z<\ell\},
&
\textup{if } \lambda<m(x) \textup{ for every } x\in\Stsp,
\\[1mm]
\{z\in\Zsp\colon z>u\},
&
\textup{if } u<\infty \textup{ and }
\lambda>m(x) \textup{ for every } x\in\Stsp,
\\[1mm]
\{\infty\},
&
\textup{if } u=\infty,
\ \infty\in\Zsp,
\textup{ and }
\lambda>m(x) \textup{ for every } x\in\Stsp .
\end{cases}
\end{equation}
When \(u=\infty\) and \(\infty\notin\Zsp\), the final case cannot occur for
finite \(\lambda\), by \textup{(PCLI2)}.

\begin{definition}[Generalized inverse of the MP index]
\label{def:gim}
A threshold map \(\zeta\colon\mathbb R\to\Zsp\) is a \emph{generalized
inverse} of \(m\) if \(\zeta(\lambda)\in\Zsp_\lambda\) for every
\(\lambda\in\mathbb R\).
\end{definition}

\begin{lemma}[Generalized inverse order properties]
\label{lma:geninvprop}
Assume \textup{(PCLI2)}, and let \(\zeta\) be a generalized inverse of
\(m\). Then, for every \(x\in\Stsp\) and \(\lambda\in\mathbb R\),
\[
x\le\zeta(\lambda)\Longrightarrow m(x)\le\lambda,
\qquad
x\ge\zeta(\lambda)\Longrightarrow m(x)\ge\lambda.
\]
Moreover, if \(\lambda_1,
\lambda_2\in m(\Stsp)\) and
\(\lambda_1<\lambda_2\), then
\(\zeta(\lambda_1)<\zeta(\lambda_2)\).
\end{lemma}

\begin{proof}
If \(m(\zeta(\lambda))=\lambda\), the implications follow from monotonicity
of \(m\). In the lower boundary case of \eqref{eq:tslambm},
\(\zeta(\lambda)<\ell\), so the first implication is vacuous and the second
follows from \(\lambda<m(y)\) for all \(y\in\Stsp\). The upper boundary
cases are analogous. For the final claim, if
\(\zeta(\lambda_1)\ge\zeta(\lambda_2)\), then monotonicity gives
\(\lambda_1=m(\zeta(\lambda_1))\ge
m(\zeta(\lambda_2))=\lambda_2\), a contradiction.
\end{proof}

\subsection{Verification and characterization theorem for threshold-indexability}
\label{s:pclbsic}

The discounted real-state theorem in \cite{nmmor20} was stated as a
sufficient condition. In the present formulation the structure is sharper:
once \textup{(PCLI1)} and the marginal integration-by-parts identity
\textup{(PCLI3b)} are available, the MP-index monotonicity/continuity
condition \textup{(PCLI2)} is necessary and sufficient for
threshold-indexability.

We first record two sign lemmas. Write
\(v_{\lambda,x}(z)=f_x(z)-\lambda g_x(z)\), with the limiting interpretation
at \(z=\infty\) when \(\infty\in\Zsp\).

\begin{lemma}[Marginal sign identity]
\label{lem:pcli-marginal-sign}
Assume \textup{(PCLI1)}, \textup{(PCLI2)}, and \textup{(PCLI3b)}. Then, for
every \(x\in\Stsp\) and \(z\in\mathbb R\),
\begin{equation}
\label{eq:pcli-marginal-sign}
\sgn\bigl(f_x(z)-m(z)g_x(z)\bigr)
=
\sgn\bigl(m(x)-m(z)\bigr).
\end{equation}
\end{lemma}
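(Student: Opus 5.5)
The approach is to read the sign directly off the integral representation in \textup{(PCLI3b)}, using \textup{(PCLI1)} to sign the integrand and \textup{(PCLI2)} to relate the mass of \(\mathrm dm\) on the integration range to the order of \(m(x)\) and \(m(z)\). Fix \(x\in\Stsp\) and finite \(z\), and split into the two cases \(z<x\) and \(x\le z\) of \eqref{eq:pcli3fg-ibp}.

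\emph{Case \(z<x\).} Here \(f_x(z)-m(z)g_x(z)=\int_{(z,x)}g_x(y)\,\mathrm dm(y)\). The open interval \((z,x)\) does not contain \(x\), so every \(y\) in the range satisfies \(y\neq x\). By \textup{(PCLI1)}, \(g_x(y)>0\) for all such \(y\); for \(y<\ell\) this uses the constant continuation \(g_x(y)=g_x(\ell^{\scriptscriptstyle -})\), which is itself positive, being a left limit handled via the active-at-threshold policy. The integral is therefore nonnegative. It is strictly positive iff \(\mu_m((z,x))>0\). Since \(\mu_m\) is nonatomic and generated by the continuous nondecreasing \(m\), we have \(\mu_m((z,x))=m(x)-m(z)\). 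This gives \(\sgn=\sgn(m(x)-m(z))\), with zero exactly when \(m(x)=m(z)\).

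\emph{Case \(x\le z\).} Here the identity is \(-\int_{[x,z]}g_x(y)\,\mathrm dm(y)\), and the range now contains \(y=x\), where \(g_x(x)\) may vanish if \(x\in\Exsp\). Because \(\mu_m\) is nonatomic, the point \(\{x\}\) carries no mass. The integral therefore equals \(\int_{(x,z]}g_x\,\mathrm dm\), where \(g_x>0\) by \textup{(PCLI1)} (including the constant continuation for \(z\ge u\), where \(g_x(u)>0\)). This quantity is positive iff \(\mu_m((x,z])=m(z)-m(x)>0\), and zero otherwise. Hence the sign of the left side is \(-\sgn(m(z)-m(x))=\sgn(m(x)-m(z))\).

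The main subtlety, and the step I would check most carefully, is the strict-positivity claim. An integral of a strictly positive measurable function against a nonnegative measure is positive iff the measure of the domain is positive. No lower bound on \(g_x\) is needed, since \(\{g_x>0\}\) exhausts the domain, so positivity of the measure forces a positive integral by countable additivity over the sets \(\{g_x>1/n\}\). Measurability and integrability of \(g_x\) should be implicit in \textup{(PCLI3b)} being well defined; c\`adl\`ag regularity (Assumption~\ref{ass:regfg}) gives Borel measurability and local boundedness, so the integrals over bounded intervals are finite.

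The other point to verify is the boundary conventions. Outside \(\Stsp\) the function \(m\) is constant, so \(\mu_m\) puts no mass there, and the cases \(z<\ell\) or \(z\ge u\) reduce correctly to \(m(z)=m(\ell)\) or \(m(z)=m(u)\).
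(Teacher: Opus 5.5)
Your proposal is correct and takes the same route as the paper's proof. It splits into the cases \(z<x\) and \(x\le z\), signs the integrand with \textup{(PCLI1)}, uses nonatomicity of \(\mathrm dm\) to discard the possible zero of \(g_x\) at \(y=x\), and identifies the interval mass with \(m(x)-m(z)\) or \(m(z)-m(x)\). Your added details (strict positivity via the sets \(\{g_x>1/n\}\), and the constant-continuation boundary cases) are sound refinements of steps the paper leaves implicit. One small simplification: for \(y<\ell\), positivity of \(g_x(y)\) follows directly from \textup{(PCLI1)}, since such \(y\) lies in \(\Zsp\) and \(y\neq x\), so the appeal to the active-at-threshold left limit is unnecessary.
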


\begin{proof}
Let \(D_x(z)\triangleq f_x(z)-m(z)g_x(z)\). If \(z<x\), then
\textup{(PCLI3b)} gives
\(D_x(z)=\int_{(z,x)}g_x(y)\,\mathrm dm(y)\). Since \(y\neq x\) throughout
\((z,x)\), \textup{(PCLI1)} gives \(g_x(y)>0\); hence the integral is
positive exactly when \(\mu_m((z,x))=m(x)-m(z)>0\).

If \(x\le z\), then
\(D_x(z)=-\int_{[x,z]}g_x(y)\,\mathrm dm(y)\). The only possible zero of
\(g_x\) on \([x,z]\) is at \(y=x\) with \(x\in\Exsp\), and
\(\mathrm dm\) has no atom there because \(m\) is continuous. Hence the
integral without the minus sign is positive exactly when
\(\mu_m([x,z])=m(z)-m(x)>0\). This proves \eqref{eq:pcli-marginal-sign}.
\end{proof}

\begin{lemma}[Sign at a generalized inverse threshold]
\label{lma:sign_mxz_lambda}
Assume Assumption~\ref{ass:regfg}, \textup{(PCLI1)}, \textup{(PCLI2)}, and
\textup{(PCLI3b)}. If \(z\in\Zsp_\lambda\), then, for every
\(x\in\Stsp\),
\begin{equation}
\label{eq:sgnsmxzlmxl}
\sgn\bigl(v_{\lambda,x}(z)\bigr)
=
\sgn\bigl(m(x)-\lambda\bigr).
\end{equation}
When \(z=\infty\in\Zsp\), this identity is understood with
\(v_{\lambda,x}(\infty)=f_x(\infty)-\lambda g_x(\infty)\).
\end{lemma}

\begin{proof}
If \(z\) is finite and \(m(z)=\lambda\), the result is
Lemma~\ref{lem:pcli-marginal-sign}. In the lower boundary case of
\eqref{eq:tslambm}, \(z<\ell\), \(m(z)=m(\ell)\), and
\(\lambda<m(y)\) for all \(y\in\Stsp\). Lemma~\ref{lem:pcli-marginal-sign}
gives \(f_x(z)-m(z)g_x(z)\ge0\), while \(m(z)-\lambda>0\) and
\(g_x(z)>0\) by \textup{(PCLI1)}; hence
\[
 v_{\lambda,x}(z)
 =\bigl[f_x(z)-m(z)g_x(z)\bigr]
  +\bigl[m(z)-\lambda\bigr]g_x(z)>0,
\]
which agrees with \(m(x)-\lambda>0\).

In the finite upper boundary case, \(u<\infty\), \(z>u\),
\(m(z)=m(u)\), and \(\lambda>m(y)\) for all \(y\in\Stsp\). The same sign
identity gives \(f_x(z)-m(z)g_x(z)\le0\), and \(g_x(z)>0\), so
\[
 v_{\lambda,x}(z)
 =\bigl[f_x(z)-m(z)g_x(z)\bigr]
  -\bigl[\lambda-m(z)\bigr]g_x(z)<0,
\]
matching \(m(x)-\lambda<0\).

Finally let \(z=\infty\in\Zsp\). If \(m(\infty)=\lambda\), the limiting
form of \textup{(PCLI3b)} gives
\[
\sgn\bigl(f_x(\infty)-m(\infty)g_x(\infty)\bigr)
=
\sgn\bigl(m(x)-m(\infty)\bigr),
\]
because \(g_x>0\) \(\mu_m\)-a.e. on \([x,\infty)\). If
\(m(\infty)\neq\lambda\) and \(\infty\in\Zsp_\lambda\), then the infinite
upper boundary case in \eqref{eq:tslambm} applies; hence
\(\lambda>m(y)\) for all \(y\in\Stsp\) and necessarily
\(\lambda>m(\infty)\). Since \(g_x(\infty)>0\) by \textup{(PCLI1)},
\[
 v_{\lambda,x}(\infty)
 =\bigl[f_x(\infty)-m(\infty)g_x(\infty)\bigr]
  -\bigl[\lambda-m(\infty)\bigr]g_x(\infty)<0,
\]
which agrees with \(m(x)-\lambda<0\).
\end{proof}

\begin{theorem}[Verification and characterization theorem for threshold-indexability]
\label{the:pcliii}
Assume Assumptions~\ref{ass:excharcosdp} and~\ref{ass:regfg}, and assume
\textup{(PCLI1)}. Then the following hold.

\begin{enumerate}[label=\textup{(\alph*)},leftmargin=*]

\item
\textup{Verification.} If \textup{(PCLI2)} and \textup{(PCLI3b)} hold for a
candidate MP index \(m\), then the project is threshold-indexable. Its
Whittle index is \(m\), with the exceptional-state extension specified in
\textup{(PCLI2)}, and its optimal threshold maps are precisely the
generalized inverses of \(m\).

\item
\textup{Necessity of \textup{(PCLI2)}.} Conversely, if the project is
threshold-indexable with Whittle index \(\lambda^*\), then
\textup{(PCLI2)} holds with \(m=\lambda^*\). In particular,
\[
        \lambda^*(x)=f_x(x)/g_x(x),
        \qquad x\in\Rgsp,
\]
while at exceptional states the extension values are the corresponding values
of the continuous index \(\lambda^*\).

\item
\textup{Characterization under \textup{(PCLI3b)}.} Consequently, under the
maintained assumptions and in any model where \textup{(PCLI3b)} has been
verified for a candidate MP index \(m\), the project is threshold-indexable
with Whittle index \(m\) iff \textup{(PCLI2)} holds for \(m\). When these
equivalent conditions hold, the optimal threshold maps are precisely the
generalized inverses of \(m\).

\end{enumerate}
\end{theorem}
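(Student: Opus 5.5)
For part (a) I will fix \(\lambda\in\mathbb R\) and a generalized inverse \(\zeta\) of \(m\); such maps exist because \(m\) is continuous and nondecreasing, so \(\Zsp_\lambda\neq\varnothing\) by the intermediate value theorem together with the boundary cases of \eqref{eq:tslambm}. Let \(z=\zeta(\lambda)\). The first step is to show that the \(z\)-policy is \(P_\lambda\)-optimal. By Assumption~\ref{ass:excharcosdp}(iii) it suffices to check the signs of \(v_{\lambda,x}(z)\) against the actions prescribed. Lemma~\ref{lma:sign_mxz_lambda} gives \(\sgn v_{\lambda,x}(z)=\sgn(m(x)-\lambda)\). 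Lemma~\ref{lma:geninvprop} then gives \(m(x)\le\lambda\) for \(x\le z\), which covers all passive states of the \(z\)-policy, and \(m(x)\ge\lambda\) for \(x>z\), which covers all active states. Hence the \(z\)-policy is optimal and \(v^*_\lambda(x)=v_{\lambda,x}(z)\), so \(\sgn v^*_\lambda(x)=\sgn(m(x)-\lambda)\) for every \(x\). This is exactly \eqref{eq:indxsigndef} with \(\lambda^*=m\), which proves indexability. Thresholdability with \(z^*=\zeta\) follows from the same sign identity combined with Lemma~\ref{lma:geninvprop}. If \(z\) is finite, I should also confirm that the \(z^-\)-policy is optimal. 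This holds because \(v^*_\lambda\) is policy-independent: every state at which the two policies differ has \(v^*_\lambda=0\) at \(x=z\) when \(z\in\Stsp\), by Lemma~\ref{lma:geninvprop}.

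The remaining claim in (a) is that every optimal threshold map is a generalized inverse. Let \(z^*(\lambda)\) be optimal; I will show \(z^*(\lambda)\in\Zsp_\lambda\) by cases on where \(\lambda\) lies relative to \(m(\Stsp)\). Combining \eqref{eq:thrlsigndef} with indexability gives \(m(x)\le\lambda\) for \(x\le z^*\) and \(m(x)\ge\lambda\) for \(x\ge z^*\).
\begin{itemize}
\item If \(z^*\in\Stsp\), both inequalities hold at \(x=z^*\), so \(m(z^*)=\lambda\).
\item If \(z^*<\ell\), then \(\lambda\le m(\ell)\). If \(\lambda=m(\ell)\), constant continuation gives \(m(z^*)=\lambda\). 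Otherwise we are in the lower boundary case of \eqref{eq:tslambm}.
\item The cases \(z^*>u\) and \(z^*=\infty\) are symmetric. When \(z^*=\infty\), I need \(m(\infty)\le\lambda\). If \(m(\infty)=\lambda\), then \(\infty\) lies in the level set.
\end{itemize}
The subtle points here are the constant extension of \(m\) outside \(\Stsp\) and the role of \(\infty\in\Zsp\).

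For part (b), assume threshold-indexability with index \(\lambda^*\) and optimal threshold map \(z^*\). I will first prove that \(\lambda^*\) is nondecreasing. Take \(x<y\) and suppose \(\lambda^*(x)>\lambda^*(y)\). Choose \(\lambda\) strictly between the two values. Then \(v^*_\lambda(x)>0\) and \(v^*_\lambda(y)<0\). By \eqref{eq:thrlsigndef} this forces \(x>z^*(\lambda)\) and \(y<z^*(\lambda)\), which contradicts \(x<y\).

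Next I will prove continuity, which I expect to be the main obstacle. Suppose \(\lambda^*\) has an upward jump at \(x_0\), say \(\lambda^*(x_0-)<\lambda^*(x_0+)\). Any \(\lambda\) in the gap, apart from possibly the single value \(\lambda^*(x_0)\), would need \(z^*(\lambda)\) to separate the states. This forces \(z^*(\lambda)=x_0\) and requires both actions to be optimal at \(x_0\), that is \(\lambda^*(x_0)=\lambda\), for a whole interval of \(\lambda\). That contradicts uniqueness of the index. I must handle the endpoint cases with care: if the jump sits exactly at \(x_0\) on one side, then \(z^*=x_0\) requires indifference at \(x_0\) whenever \(x_0\in\Stsp\), which again contradicts uniqueness. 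If \(u=\infty\) and \(\infty\notin\Zsp\), then \(z^*(\lambda)\) is finite for every \(\lambda\), and letting \(\lambda\to\infty\) forces \(\lambda^*(x)\to\infty\).

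Finally, on \(\Rgsp\) I will take \(\lambda=\lambda^*(x)\) and \(z=z^*(\lambda)\). By Assumption~\ref{ass:excharcosdp}(iii), \(v^*_\lambda(x)=f_x(z)-\lambda g_x(z)=0\). The difficulty is that this yields \(f_x(x)/g_x(x)\) only when \(z=x\). I will therefore choose a selection of thresholds with \(x\) among them, using the fact that \(x\) and \(z\) lie in the same level set of \(\lambda^*\). If needed, I will instead use the \(x\)- or \(x^-\)-policy, which is also optimal at price \(\lambda^*(x)\) since both actions are optimal at \(x\). This gives \(f_x(x)=\lambda^*(x)g_x(x)\), and (PCLI1) then lets me divide by \(g_x(x)\). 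Part (c) is the combination of (a) and (b).
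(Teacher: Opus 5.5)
Your proposal is correct and follows essentially the same route as the paper. Part (a) combines the sign identity at a generalized inverse with Assumption~\ref{ass:excharcosdp}(iii), then does a case analysis on where \(z^*(\lambda)\) lies to show every optimal threshold is a generalized inverse; your explicit existence argument for generalized inverses via the intermediate value theorem is a useful addition the paper leaves implicit. Part (b) uses the no-crossing and no-jump arguments, then evaluates the \(x\)-policy at \(\lambda=\lambda^*(x)\). One precision: that \(x\)-policy is \(P_\lambda\)-optimal not merely because both actions are optimal at \(x\). The reason is that the monotonicity of \(\lambda^*\), which you already proved, makes passivity optimal at every \(y\le x\) and activity optimal at every \(y>x\), so Assumption~\ref{ass:excharcosdp}(ii) applies. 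This is exactly how the paper obtains \(f_x(x)=\lambda^*(x)g_x(x)\).
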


\begin{proof}
For part \textup{(a)}, let \(\zeta\) be a generalized inverse of \(m\), fix
\(\lambda\), and put \(z=\zeta(\lambda)\in\Zsp_\lambda\). By
Lemma~\ref{lma:sign_mxz_lambda},
\[
\sgn v_{\lambda,x}(z)=\sgn\bigl(m(x)-\lambda\bigr),
\qquad x\in\Stsp .
\]
By Lemma~\ref{lma:geninvprop}, \(x\le z\) implies \(m(x)\le\lambda\), and
\(x\ge z\) implies \(m(x)\ge\lambda\). Therefore
\[
        v_{\lambda,x}(z)\le0 \quad\textup{on }S^0(z),
        \qquad
        v_{\lambda,x}(z)\ge0 \quad\textup{on }S^1(z).
\]
Assumption~\ref{ass:excharcosdp}\textup{(iii)} gives that the \(z\)-policy
is \(P_\lambda\)-optimal. Choosing
\(v_\lambda^*(x)=v_{\lambda,x}(z)\), again by
Assumption~\ref{ass:excharcosdp}\textup{(iii)}, yields
\[
        \sgn v_\lambda^*(x)
        =
        \sgn\bigl(m(x)-\lambda\bigr),
        \qquad x\in\Stsp,
\]
so the project is indexable with Whittle index \(m\). The same signs and
Lemma~\ref{lma:geninvprop} give the thresholdability inequalities; hence
every generalized inverse is an optimal threshold map.

It remains to prove that there are no other optimal threshold maps. Let
\(z^*(\lambda)\in\Zsp\) be any optimal threshold value. Since the Whittle
index is \(m\), Definition~\ref{def:potp} gives
\[
x\le z^*(\lambda)\Longrightarrow m(x)\le\lambda,
\qquad
x\ge z^*(\lambda)\Longrightarrow m(x)\ge\lambda .
\]
If \(z^*(\lambda)\in\Stsp\), taking \(x=z^*(\lambda)\) gives
\(m(z^*(\lambda))=\lambda\). If \(z^*(\lambda)<\ell\), the second
implication gives \(m(x)\ge\lambda\) for all \(x\); hence either
\(\lambda=m(\ell)\), in which case constant continuation puts
\(z^*(\lambda)\) in the level set, or \(\lambda<m(x)\) for all \(x\), the
lower boundary case of \eqref{eq:tslambm}. The finite and infinite upper
boundary cases are analogous. Thus \(z^*(\lambda)\in\Zsp_\lambda\), proving
that the optimal threshold maps are precisely the generalized inverses of
\(m\).

For part \textup{(b)}, suppose the project is threshold-indexable with
Whittle index \(\lambda^*\). We first show that \(\lambda^*\) is
nondecreasing. If \(x_1<x_2\) but
\(\lambda^*(x_1)>\lambda^*(x_2)\), choose
\(\lambda\in(\lambda^*(x_2),\lambda^*(x_1))\). Then action \(1\) is
uniquely optimal at \(x_1\) and action \(0\) is uniquely optimal at
\(x_2\), contradicting thresholdability: an optimal threshold either places
\(x_1\) in the passive region or places \(x_2\) in the active region.
Therefore \(\lambda^*\) is nondecreasing.

The same argument rules out jumps. If, at an interior point \(x_0\),
\(\lambda^*(x_0-)<\lambda^*(x_0)\), choose
\(\lambda\) strictly between these values. Then activation is uniquely
optimal at \(x_0\), whereas passivity is uniquely optimal at all
\(y<x_0\) sufficiently close to \(x_0\), which is incompatible with any
threshold. The case \(\lambda^*(x_0)<\lambda^*(x_0+)\) is symmetric, and
endpoints use the same one-sided argument. Thus \(\lambda^*\) is continuous
on \(\Stsp\).

If \(u=\infty\) and \(\infty\notin\Zsp\), then
\(\lambda^*(x)\to\infty\) as \(x\to\infty\). Otherwise, for some finite
\(\lambda>\sup_x\lambda^*(x)\), passivity would be uniquely optimal in every
state; thresholdability would then require an all-passive threshold, which no
finite threshold represents on the half-line.

Finally fix \(x\in\Rgsp\) and put \(\lambda=\lambda^*(x)\). By
indexability, both actions are optimal at \(x\). Since \(\lambda^*\) is
nondecreasing, the \(x\)-policy selects an optimal action in every state:
passive for \(y\le x\) and active for \(y>x\). Hence the \(x\)-policy is
\(P_\lambda\)-optimal by Assumption~\ref{ass:excharcosdp}\textup{(ii)}.
Assumption~\ref{ass:excharcosdp}\textup{(iii)} then gives
\[
0=v_{\lambda,x}(x)=f_x(x)-\lambda g_x(x).
\]
Because \(x\in\Rgsp\), \textup{(PCLI1)} gives \(g_x(x)>0\), and hence
\(\lambda^*(x)=f_x(x)/g_x(x)\). At exceptional states the diagonal ratio is
undefined because \(f_x(x)=g_x(x)=0\); the required values are those of the
continuous index \(\lambda^*\). Thus \textup{(PCLI2)} holds with
\(m=\lambda^*\).

Part \textup{(c)} follows immediately from parts \textup{(a)} and
\textup{(b)}.
\end{proof}

\begin{remark}[Why \textup{(PCLI3a)} and \textup{(PCLI3b)} are separated]
\label{re:mainthm}
Theorem~\ref{the:pcliii} uses \textup{(PCLI3b)}, not \textup{(PCLI3a)},
because \textup{(PCLI3b)} is the statewise sign identity needed to verify
optimal threshold policies. In the discounted real-state theory of
\cite{nmmor20}, the condition called \textup{(PCLI3)} was the
Radon--Nikodym identity \(\mathrm dF_x=m\,\mathrm dG_x\), and the marginal
identity could be derived using bounded variation of discounted marginal
metrics. That derivation need not survive the average limit, where
\(z\mapsto g_x(z)\) may fail to generate a signed LS measure. Therefore the
criterion-agnostic framework keeps \textup{(PCLI3a)} as the PCL and
shadow-price identity for threshold performance metrics, while imposing
\textup{(PCLI3b)} directly as the identity used in the verification proof.
Proposition~\ref{pro:intgradz} below shows that, under LS regularity,
\textup{(PCLI3a)} is nevertheless necessary once threshold-indexability is
known.
\end{remark}

\subsection{Geometric and economic interpretation}
\label{s:geom_interp}

The full PCL conditions have the usual resource--reward frontier
interpretation. This interpretation is not used in Theorem~\ref{the:pcliii}
and requires the standard randomized-policy convexification and compactness
assumptions, as in \cite[Online Companion, App.~B]{nmmor20}. Fix an initial
distribution \(\nu\) and write
\(F(\nu,
\pi)=\int_\Stsp F(x,
\pi)\,\nu(dx)\) and
\(G(\nu,
\pi)=\int_\Stsp G(x,
\pi)\,\nu(dx)\). The achievable
resource--reward region is
\(\mathcal R_{GF}(\nu)=\{(G(\nu,
\pi),F(\nu,
\pi)):\pi\in\Pi\}\), and its
efficient frontier is the upper reward boundary at each resource level.

Under threshold-indexability, randomized threshold policies trace this
frontier. If \(z\) and \(z^{\scriptscriptstyle -}\) are the passive- and
active-at-threshold conventions, mixtures between their performance points
generate the corresponding frontier segment. The Radon--Nikodym identity
\textup{(PCLI3a)} gives
\(\mathrm dF_\nu(z)=m(z)\,\mathrm dG_\nu(z)\), whenever the LS measures are
well defined. At a threshold atom,
\[
        m(z)=
        \frac{F_\nu(z^{\scriptscriptstyle -})-F_\nu(z)}
             {G_\nu(z^{\scriptscriptstyle -})-G_\nu(z)},
\]
provided the denominator is positive. Thus \(m(z)\) is the supporting slope
of the efficient frontier and the resource shadow price at which the local
threshold exchange is neutral.

\subsection{Why the conditions are natural}
\label{s:potip}

The theorem above gives the formal characterization. We briefly record why
its conditions are natural and how they relate to the discounted real-state
PCL framework of \cite{nmmor20}.

\begin{lemma}[Whittle index and thresholdability; cf.\ Lemma~2 in \cite{nmmor20}]
\label{lma:iwrttpndlc}
Suppose the project is indexable with Whittle index \(\lambda^*\). Extend
\(\lambda^*\) to finite thresholds by constant continuation outside the state
space:
\[
        \lambda^*(z)=\lambda^*(\ell), \quad z<\ell,
        \qquad
        \lambda^*(z)=\lambda^*(u), \quad z\ge u,
\ u<\infty .
\]
If \(u=\infty\) and \(\infty\in\Zsp\), set
\(\lambda^*(\infty)\triangleq\lim_{x\to\infty}\lambda^*(x)\), when the
limit exists. Then the project is thresholdable iff \(\lambda^*\) is
nondecreasing and continuous on \(\Stsp\), and, when
\(u=\infty\) and \(\infty\notin\Zsp\),
\(\lambda^*(x)\to\infty\) as \(x\to\infty\). In that case, the optimal
threshold maps are the generalized inverses of \(\lambda^*\).
\end{lemma}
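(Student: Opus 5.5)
The plan is to argue the two directions separately, using only indexability (Definition~\ref{def:indx}), Remark~\ref{re:indx}, and the order properties of generalized inverses (Lemma~\ref{lma:geninvprop}, whose proof uses only monotonicity and continuity of the index, so it applies verbatim with \(m=\lambda^*\)). Throughout, indexability gives \(\sgn v_\lambda^*(x)=\sgn(\lambda^*(x)-\lambda)\), so thresholdability reduces to a purely order-theoretic statement about \(\lambda^*\). The key inequalities to check are
\[
x\le z\Rightarrow\lambda^*(x)\le\lambda,
\qquad
x\ge z\Rightarrow\lambda^*(x)\ge\lambda,
\]
for some threshold \(z\in\Zsp\).

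\emph{Necessity.} Assume the project is thresholdable. I will repeat the argument from part (b) of Theorem~\ref{the:pcliii}, which used only thresholdability plus indexability. Suppose \(x_1<x_2\) with \(\lambda^*(x_1)>\lambda^*(x_2)\). Choosing \(\lambda\) strictly between the two values gives unique activation at \(x_1\) and unique passivity at \(x_2\). This contradicts the existence of \(z^*(\lambda)\): if \(x_1\le z^*(\lambda)\), then \(x_1\) must be passive-optimal; otherwise \(x_2> z^*(\lambda)\) must be active-optimal. A jump \(\lambda^*(x_0-)<\lambda^*(x_0)\) is excluded by the same argument applied to \(x_0\) and nearby \(y<x_0\); right jumps and endpoints are handled one-sidedly. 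For the divergence condition, suppose \(u=\infty\), \(\infty\notin\Zsp\), and \(\sup\lambda^*<\infty\). Any \(\lambda>\sup\lambda^*\) then makes passivity uniquely optimal everywhere. But every finite \(z^*(\lambda)\) has states \(x\ge z^*(\lambda)\), which would have to be active-optimal, a contradiction.

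\emph{Sufficiency.} Assume \(\lambda^*\) is nondecreasing and continuous, with divergence when required. Define \(\Zsp_\lambda\) as in \eqref{eq:tslambm} with \(m\) replaced by \(\lambda^*\). The first step is to check that \(\Zsp_\lambda\neq\varnothing\) for every \(\lambda\). The range \(\lambda^*(\Stsp)\) is an interval by continuity. If \(\lambda\) lies in the range, the level set is nonempty. If \(\lambda\) is below the range, the lower case applies. If \(\lambda\) is above the range, then either \(u<\infty\), or \(u=\infty\) with \(\infty\in\Zsp\), because divergence excludes the remaining case.

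The main obstacle is the edge values of the range. When \(\lambda\) equals \(\inf\lambda^*\) but is not attained (possible only at the open end \(u=\infty\) from the sup side, or not at all at \(\ell\) since \(\lambda^*(\ell)\) is attained), one of the four cases in \eqref{eq:tslambm} must still be available. When \(\lambda=\sup\lambda^*\) is not attained with \(u=\infty\), we need \(\infty\in\Zsp\), which holds because otherwise divergence gives \(\sup=\infty\). For such \(\lambda\), \(m(\infty)=\lambda\) places \(\infty\) in the level set, using the convention \(\lambda^*(\infty)=\lim\lambda^*\).

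With \(\zeta(\lambda)\in\Zsp_\lambda\) chosen, Lemma~\ref{lma:geninvprop} yields the two implications displayed above. Combined with the sign identity from indexability, these give exactly \eqref{eq:thrlsigndef}, so \(\zeta\) is an optimal threshold map.

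For the final claim, that optimal threshold maps are exactly the generalized inverses, the argument follows the closing step of part (a) of Theorem~\ref{the:pcliii}. Let \(z^*(\lambda)\) be optimal. If \(z^*(\lambda)\in\Stsp\), the two implications force \(\lambda^*(z^*(\lambda))=\lambda\). If \(z^*(\lambda)<\ell\), or \(z^*(\lambda)>u\), or \(z^*(\lambda)=\infty\), they force \(\lambda\) into the corresponding boundary case, or into the level set via constant continuation. Hence \(z^*(\lambda)\in\Zsp_\lambda\).
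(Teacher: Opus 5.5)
Your proposal is correct and follows essentially the same route as the paper. Necessity comes from the monotonicity, no-jump and divergence argument of Theorem~\ref{the:pcliii}(b). Sufficiency and the exact identification of optimal threshold maps come from generalized inverses, Lemma~\ref{lma:geninvprop}, and the closing step of part~(a); your explicit check that $\Zsp_\lambda\neq\varnothing$ for every $\lambda$, via continuity and the divergence condition, fills in a step the paper leaves implicit.
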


\begin{proof}
This is the criterion-agnostic form of \cite[Lemma~2]{nmmor20}. The proof
uses only the sign characterization of indexability, the thresholdability
inequalities, and the finite-threshold continuation convention; the same
argument is given in the necessity part of Theorem~\ref{the:pcliii}.
\end{proof}

Thus \textup{(PCLI2)} is not an artifact of the PCL method: it is exactly the
state-monotonicity structure imposed on any Whittle index by
thresholdability.

\begin{proposition}[MP ratio as candidate index; cf.\ Proposition~1 in \cite{nmmor20}]
\label{pro:phiwphimpi}
Suppose Assumption~\ref{ass:excharcosdp} holds. If the project is
threshold-indexable with Whittle index \(\lambda^*\), then, for every
\(x\in\Stsp\),
\[
        f_x(x)-\lambda^*(x)g_x(x)=0,
\qquad
        f_x(x^{\scriptscriptstyle -})
        -
        \lambda^*(x)g_x(x^{\scriptscriptstyle -})=0 .
\]
Consequently, whenever \(g_x(x)\neq0\),
\(\lambda^*(x)=f_x(x)/g_x(x)=m_x(x)\). Under the everywhere-positive version
of \textup{(PCLI1)}, the Whittle index must therefore coincide with the MP
index.
\end{proposition}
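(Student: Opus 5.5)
Fix \(x\in\Stsp\) and set \(\lambda=\lambda^*(x)\). The plan is to exhibit a \(P_\lambda\)-optimal SD policy that is the \(x\)-policy, and another that is the \(x^{\scriptscriptstyle -}\)-policy. Then Assumption~\ref{ass:excharcosdp}\textup{(iii)} turns indifference at \(x\) into the two vanishing marginal identities. Since the project is threshold-indexable, Lemma~\ref{lma:iwrttpndlc} gives that \(\lambda^*\) is nondecreasing and continuous, and that the optimal threshold maps are its generalized inverses. I would rather reason directly from the sign characterization \eqref{eq:indxsigndef}, exactly as in the necessity part of Theorem~\ref{the:pcliii}. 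For \(y\le x\) we have \(\lambda^*(y)\le\lambda^*(x)=\lambda\), hence \(v_\lambda^*(y)\le0\) and passivity is optimal. For \(y\ge x\) we have \(\lambda^*(y)\ge\lambda\), hence \(v_\lambda^*(y)\ge0\) and activation is optimal.

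First, the \(x\)-policy (passive on \(y\le x\), active on \(y>x\)) selects a \(P_\lambda\)-optimal action in every state. By the last clause of Assumption~\ref{ass:excharcosdp}\textup{(ii)} it is therefore \(P_\lambda\)-optimal. Assumption~\ref{ass:excharcosdp}\textup{(iii)} then lets us take \(v_\lambda^*(\cdot)=v_\lambda(\cdot,x\textup{-policy})\). Evaluating at the state \(x\) itself, indifference gives \(v_\lambda^*(x)=0\) by \eqref{eq:indxsigndef}, so \(f_x(x)-\lambda^*(x)g_x(x)=0\).

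Second, the \(x^{\scriptscriptstyle -}\)-policy (passive on \(y<x\), active on \(y\ge x\)) also selects optimal actions everywhere, since activation is optimal at \(y=x\). The same two steps show it is \(P_\lambda\)-optimal and that \(v_\lambda^*\) may be chosen as its marginal net metric. Uniqueness of the sign of \(v_\lambda^*(x)\) under indexability, namely \(\sgn v_\lambda^*(x)=\sgn(\lambda^*(x)-\lambda)=0\), then gives \(f_x(x^{\scriptscriptstyle -})-\lambda^*(x)g_x(x^{\scriptscriptstyle -})=0\). The one subtlety is that different optimal policies may induce different representatives \(v_\lambda^*\). However, indexability fixes the sign of any admissible choice to be \(\sgn(\lambda^*(x)-\lambda)\), which is \(0\) here, so the argument goes through for each choice separately.

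The consequences are then immediate. If \(g_x(x)\neq0\), divide the first identity to obtain \(\lambda^*(x)=f_x(x)/g_x(x)=m_x(x)\). Under the everywhere-positive form of \textup{(PCLI1)} this holds at every \(x\), so \(\lambda^*=m\). The main point needing care is the appeal to Assumption~\ref{ass:excharcosdp}\textup{(iii)}, which requires that its choice of \(Q_\lambda^*\) is compatible with the sign characterization in \textup{(ii)} for every optimal SD policy. I would make this explicit: \textup{(iii)} asserts \(v_\lambda^*(x)=v_\lambda(x,\pi^*)\) for any optimal \(\pi^*\), and indexability forces its value at \(x\) to be zero. Boundary states need no special treatment, because \(x\in\Stsp\) is always a genuine state, so both threshold conventions at \(z=x\) are admissible finite thresholds.
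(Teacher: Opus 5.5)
Your proposal is correct and is essentially the paper's argument: monotonicity of \(\lambda^*\) (from thresholdability, via Lemma~\ref{lma:iwrttpndlc} or the necessity part of Theorem~\ref{the:pcliii}) makes both the \(x\)-policy and the \(x^{\scriptscriptstyle -}\)-policy select optimal actions in every state at \(\lambda=\lambda^*(x)\). Assumption~\ref{ass:excharcosdp}\textup{(ii)}--\textup{(iii)} then lets either threshold continuation represent \(v_\lambda^*\), and indexability gives \(v_\lambda^*(x)=0\). Your remark that the optimal-action characterization in \textup{(ii)} fixes the sign of any valid representative of \(v_\lambda^*(x)\) only makes explicit a step the paper leaves implicit.
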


\begin{proof}
Put \(\lambda=\lambda^*(x)\). By Lemma~\ref{lma:iwrttpndlc},
\(\lambda^*\) is nondecreasing, so the \(x\)-policy and the
\(x^{\scriptscriptstyle -}\)-policy each select an optimal action in every
state at price \(\lambda\). Assumption~\ref{ass:excharcosdp} then allows the
optimal Q-value difference at state \(x\) to be represented by either
threshold continuation. Since \(\lambda=\lambda^*(x)\), indexability gives
\(v_\lambda^*(x)=0\), yielding both displayed identities. Division by
\(g_x(x)\) gives the MP-ratio formula when \(g_x(x)\neq0\).
\end{proof}

If \(g_x(x)=0\), the same identity gives \(f_x(x)=0\), creating a removable
\(0/0\) diagonal singularity. This is the reason for the exceptional-set
formulation in \textup{(PCLI1)}--\textup{(PCLI2)}.

\begin{proposition}[Necessity of the Radon--Nikodym identity; cf.\ Proposition~2 in \cite{nmmor20}]
\label{pro:intgradz}
Suppose the project is threshold-indexable with Whittle index \(\lambda^*\),
extended to finite thresholds by constant continuation outside \(\Stsp\).
Assume that \(F_x\) and \(G_x\) satisfy the LS regularity in
Assumption~\ref{ass:regFG}. Then, for every \(x\in\Stsp\) and finite
\(z_1<z_2\),
\[
        F_x(z_2)-F_x(z_1)
        =
        \int_{(z_1,z_2]}\lambda^*(z)\,\mathrm dG_x(z).
\]
Equivalently,
\(\mathrm dF_x(z)=\lambda^*(z)\,\mathrm dG_x(z)\) as a signed LS measure
identity.
\end{proposition}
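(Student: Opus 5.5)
The plan is a sandwich argument. Optimality of threshold policies at the indifference price gives two-sided chord inequalities for \(F_x\) against \(G_x\). Summed over a partition, these inequalities trap \(F_x(z_2)-F_x(z_1)\) between two Riemann--Stieltjes sums of \(\lambda^*\) against \(\mathrm dG_x\), and both sums converge to the LS integral.

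\emph{Step 1 (pointwise optimality of each threshold at its own index value).} By Lemma~\ref{lma:iwrttpndlc}, \(\lambda^*\) is nondecreasing and continuous on \(\Stsp\), and the optimal threshold maps are exactly the generalized inverses of \(\lambda^*\). Fix a finite \(y\) and put \(\lambda=\lambda^*(y)\), using the constant continuation when \(y\notin\Stsp\). Then \(y\in\{z:\lambda^*(z)=\lambda\}\subseteq\Zsp_\lambda\). So some optimal threshold map takes the value \(y\) at \(\lambda\), and Remark~\ref{re:thr_equiv}(i) gives \(V_\lambda^*(x)=V_{\lambda,x}(y)\). Since every finite-threshold policy is admissible, \(V_{\lambda,x}(y)\ge V_{\lambda,x}(y')\) for every finite \(y'\). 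That is,
\[
F_x(y)-F_x(y')\ \ge\ \lambda^*(y)\bigl(G_x(y)-G_x(y')\bigr),\qquad y,y'\in\mathbb R .
\]

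\emph{Step 2 (chord bounds).} Take \(a<b\) and apply Step~1 twice, once with \((y,y')=(b,a)\) and once with \((y,y')=(a,b)\). This yields
\[
\lambda^*(a)\bigl(G_x(b)-G_x(a)\bigr)\ \le\ F_x(b)-F_x(a)\ \le\ \lambda^*(b)\bigl(G_x(b)-G_x(a)\bigr).
\]
Both bounds are consistent because \(G_x(b)-G_x(a)\le0\) and \(\lambda^*(a)\le\lambda^*(b)\).

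\emph{Step 3 (partition and pass to the limit).} For a partition \(z_1=t_0<t_1<\dots<t_n=z_2\), sum the chord bounds over the cells \((t_{i-1},t_i]\). This traps \(F_x(z_2)-F_x(z_1)\) between the left-endpoint and right-endpoint Stieltjes sums \(\sum_i\lambda^*(t_{i-1})\,\mu_{G_x}((t_{i-1},t_i])\) and \(\sum_i\lambda^*(t_i)\,\mu_{G_x}((t_{i-1},t_i])\). The difference of the two sums is bounded in absolute value by
\[
\max_i\bigl|\lambda^*(t_i)-\lambda^*(t_{i-1})\bigr|\cdot |\mu_{G_x}|((z_1,z_2]).
\]
This tends to zero as the mesh shrinks, because \(\lambda^*\) (with its constant continuation) is uniformly continuous on \([z_1,z_2]\) and \(G_x\) has locally bounded variation by Assumption~\ref{ass:regFG}.

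I expect the main point needing care to be the identification of the limit of these sums with the LS integral \(\int_{(z_1,z_2]}\lambda^*\,\mathrm dG_x\), particularly when \(G_x\) has atoms. I would handle it directly in measure terms. The step function equal to \(\lambda^*(t_i)\) on each cell \((t_{i-1},t_i]\) converges uniformly to \(\lambda^*\) on \((z_1,z_2]\), by continuity of \(\lambda^*\). Integrating this step function against the finite measure \(\mu_{G_x}\) restricted to \((z_1,z_2]\) gives exactly the Stieltjes sum, including any atom mass at a right endpoint \(t_i\). Uniform convergence therefore makes both sums converge to the LS integral, so the sandwich gives the stated identity.

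Finally, the finite-threshold continuation convention needs a brief remark. Outside \(\Stsp\) the functions \(F_x\), \(G_x\) and \(\lambda^*\) are constant, so cells lying there contribute zero to both sides. Hence no separate boundary argument is needed.
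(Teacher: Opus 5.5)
Your proof is correct, and it takes a different route from the paper's. The paper, following \cite[Proposition~2]{nmmor20}, works in the price variable. It uses that \(\lambda\mapsto V_\lambda^*(x)\) is convex and that \(-G_x(z^*(\lambda))\) is a subgradient along optimal threshold maps. It then obtains the identity in \(z\) by a Stieltjes change of variables \(\lambda=\lambda^*(z)\), which uses the LS regularity of both \(F_x\) and \(G_x\). Your Step~1 inequality is that same subgradient inequality in disguise: the line through \((\lambda^*(y'),V^*_{\lambda^*(y')}(x))\) with slope \(-G_x(y')\) lies below the graph of \(\lambda\mapsto V^*_\lambda(x)\) at \(\lambda=\lambda^*(y)\). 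The difference is that you sum it over a partition in the threshold variable instead of integrating it in the price variable.

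This buys a more elementary proof, because you never invert \(\lambda=\lambda^*(z)\). Flat stretches of \(\lambda^*\), where a whole \(z\)-interval maps to one price, need no special handling: on a flat cell your two bounds coincide. Atoms of \(G_x\), which correspond to kinks of \(\lambda\mapsto V^*_\lambda(x)\), also need none: their mass sits in a cell and is controlled by continuity of \(\lambda^*\). Your route also needs less. Only monotonicity and right-continuity of \(G_x\) are used, and the LS regularity of \(F_x\) (c\`adl\`ag, locally bounded variation) comes out of the identity rather than being an input. What the paper's route gives in addition is the global description of \(V^*_\lambda(x)\) as a convex function of the price whose slope is minus the resource metric of the optimal threshold. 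That is the shadow-price picture behind \S\ref{s:geom_interp}.

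One correction in Step~2: the bounds are displayed in the wrong order. Step~1 with \((y,y')=(b,a)\) gives \(F_x(b)-F_x(a)\ge\lambda^*(b)\bigl(G_x(b)-G_x(a)\bigr)\). With \((y,y')=(a,b)\) it gives \(F_x(b)-F_x(a)\le\lambda^*(a)\bigl(G_x(b)-G_x(a)\bigr)\). Since \(G_x(b)-G_x(a)\le0\) and \(\lambda^*(a)\le\lambda^*(b)\), this is the consistent order; your displayed order could hold only with equality, so your stated justification actually argues against it. Step~3 only uses that the increment lies between the left- and right-endpoint sums, so nothing downstream changes, except that the right-endpoint sum is the lower bound.
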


\begin{proof}
This is the criterion-agnostic form of \cite[Proposition~2]{nmmor20}. The
proof uses convexity of \(\lambda\mapsto V_\lambda^*(x)\), the subgradient
identity that identifies \(-G_x(z^*(\lambda))\) along optimal threshold maps,
and the Stieltjes change of variables from price \(\lambda\) to threshold
\(z\) through \(\lambda=\lambda^*(z)\). These steps use
threshold-indexability and the LS regularity of \(F_x\) and \(G_x\), not the
discounted nature of the performance criterion.
\end{proof}

Once \(\lambda^*\) is identified with the MP index \(m\),
Proposition~\ref{pro:intgradz} becomes \textup{(PCLI3a)}. The marginal
identity \textup{(PCLI3b)} has a different role: it is not a convexity
consequence and is not an LS identity with integrator \(\mathrm dg_x\). It is
the stable sign mechanism
\[
\sgn\bigl(f_x(z)-m(z)g_x(z)\bigr)
=
\sgn\bigl(m(x)-m(z)\bigr),
\]
which is robust in long-run average scalar-state models where
\(z\mapsto g_x(z)\) may fail to have locally bounded variation.

\section{Vanishing-discount transfer to the discrete-time long-run average criterion}
\label{s:dtla}

This section transfers discounted PCL-indexability to the discrete-time
long-run average criterion. We consider a scalar-state project for which the
\(\beta\)-discounted PCL-indexability conditions of \cite{nmmor20} hold for
all \(\beta\in(0,1)\). The aim is to identify assumptions under which the
vanishing-discount limits of the discounted threshold metrics satisfy the
average-criterion PCL conditions in Definition~\ref{def:pcli}.

The main issue is the limiting behavior of marginal metrics as functions of the
threshold. In the discounted theory, \(z\mapsto g_{\beta,x}(z)\) has bounded
variation, and signed LS integrals involving \(\mathrm dg_{\beta,x}\) are
available. In the long-run average limit, \(z\mapsto g_x(z)\) may fail to have
locally bounded variation, for example in scalar models with threshold-orbit
accumulation. We therefore transfer the integration-by-parts form
\textup{(PCLI3b)}, where \(g_x\) appears only as an integrand with respect to
\(\mathrm dm\), the nonnegative LS measure generated by the limiting MP index.

The dynamic-programming component is standard. It parallels the classical
vanishing-discount approach to average-cost MDPs; see, for example,
\cite{schal93} and \cite[Ch.~5]{herlerLass96}. Weakly continuous Borel-state
MDP theory gives common sufficient conditions for the average optimality
relations used below: \cite{feinbergetal12} treats average-cost optimality
inequalities and stationary average-optimal policies, while
\cite{feinbergLiang22} gives conditions leading to the average-cost optimality
equation. We state these optimality prerequisites separately from the
metric-limit hypotheses needed for PCL-indexability.

\subsection{Average \texorpdfstring{\(\lambda\)}{lambda}-price problems and Q-values}
\label{ss:av-lambda}

Consider a discrete-time controlled Markov project on \(\Stsp\), with action
set \(\{0,1\}\), Borel transition kernels \(\kappa^a(x,\cdot)\), one-step
reward \(r(x,a)\), and one-step resource consumption \(c(x,a)\). Write
\(\kappa_x^a(\cdot)\triangleq \kappa^a(x,\cdot)\). Thus \(\kappa_x^a\) is a
probability measure on \(\Stsp\), and transition expectations are denoted by
\(\int_\Stsp h(y)\,\kappa_x^a(dy)\). This notation is reserved for transition
probabilities, whereas \(\int\varphi(z)\,\mathrm dH(z)\) denotes an LS integral
generated by a threshold-performance function. All displayed integrals are
assumed well defined.

For any measurable \(h\colon\Stsp\to\mathbb R\) for which the integral is well
defined, set
\[
        ({\mathcal K}^a h)(x)
        \triangleq
        \int_\Stsp h(y)\,\kappa_x^a(dy),
        \qquad x\in\Stsp,\quad a\in\{0,1\}.
\]
For price \(\lambda\in\mathbb R\), let
\(r_\lambda(x,a)\triangleq r(x,a)-\lambda c(x,a)\). Let \(\Pi\) be the class of
admissible policies, and let \(\mathbb P_x^\pi\), \(\mathbb E_x^\pi\) denote
probability and expectation for the controlled chain started from \(X_0=x\).
For policies with well-defined finite-horizon expectations and no
\(\infty-\infty\) ambiguity, define the average net value
\begin{equation}
\label{eq:avgJlambda}
V_\lambda(x,\pi)
\triangleq
\liminf_{T\to\infty}
\frac{1}{T}\,
\mathbb E_x^\pi
\left[
\sum_{t=0}^{T-1} r_\lambda(X_t,A_t)
\right],
\qquad x\in\Stsp .
\end{equation}
In general this value may depend on \(x\). In the transfer argument, threshold
policies are singled out: Assumption~\ref{ass:avg-limits} requires their
separate long-run average reward and resource gains to exist and be
state-independent. These gains are denoted by \(F(z)\) and \(G(z)\), so for the
\(z\)-threshold policy \(\pi^z\),
\[
        V_\lambda(x,\pi^z)=F(z)-\lambda G(z),
        \qquad x\in\Stsp,
\]
whenever the separate averages exist. No such state-independence is imposed on
arbitrary policies.

The optimal average net value is
\(V_\lambda^*(x)\triangleq\sup_{\pi\in\Pi}V_\lambda(x,\pi)\), where the supremum
is taken over policies for which \eqref{eq:avgJlambda} is well defined. The
average \(\lambda\)-price problem \(P_\lambda^{\rm av}\) is to find
\(\pi_\lambda^*\in\Pi\) with
\(V_\lambda(x,\pi_\lambda^*)=V_\lambda^*(x)\) for all \(x\in\Stsp\).

For a measurable function \(h\colon\Stsp\to\mathbb R\), define the one-step net
Q-value
\[
Q_\lambda^h(x,a)
\triangleq
r_\lambda(x,a)+({\mathcal K}^a h)(x),
\qquad a\in\{0,1\}.
\]
The next assumption supplies the average optimality structure needed to recover
Assumption~\ref{ass:excharcosdp}.

\begin{assumption}[Average optimality and optimal-action characterization]
\label{ass:avg-acoe}
For each \(\lambda\in\mathbb R\), the following hold.
\begin{enumerate}[label=\textup{(\roman*)},leftmargin=*]

\item
There exist a state-independent gain \(\rho_\lambda\in\mathbb R\) and a
measurable bias \(h_\lambda\colon\Stsp\to\mathbb R\) satisfying
\begin{equation}
\label{eq:avg-acoe-main}
\rho_\lambda+h_\lambda(x)
=
\max_{a\in\{0,1\}}Q_\lambda^{h_\lambda}(x,a),
\qquad x\in\Stsp .
\end{equation}
Every SD policy selecting maximizers in \eqref{eq:avg-acoe-main} is
\(P_\lambda^{\rm av}\)-optimal, and \(V_\lambda^*(x)=\rho_\lambda\) for every
\(x\in\Stsp\).

\item
If an SD policy \(\pi\) has an average net gain \(\rho_\lambda^\pi\) and a
measurable bias \(h_\lambda^\pi\) satisfying
\[
\rho_\lambda^\pi+h_\lambda^\pi(x)
=
Q_\lambda^{h_\lambda^\pi}(x,\pi(x)),
\qquad x\in\Stsp,
\]
then \(\pi\) is \(P_\lambda^{\rm av}\)-optimal iff
\[
\pi(x)\in\operatorname*{argmax}_{a\in\{0,1\}}
Q_\lambda^{h_\lambda^\pi}(x,a),
\qquad x\in\Stsp .
\]
\end{enumerate}
\end{assumption}

Choose a bias \(h_\lambda\) satisfying \eqref{eq:avg-acoe-main} and define
\begin{equation}
\label{eq:Vlambastarav}
Q_\lambda^*(x,a)
\triangleq
Q_\lambda^{h_\lambda}(x,a),
\qquad a\in\{0,1\}.
\end{equation}
Set \(v_\lambda^*(x)\triangleq Q_\lambda^*(x,1)-Q_\lambda^*(x,0)\). The
additive normalization of \(h_\lambda\) cancels in this difference.
Assumption~\ref{ass:avg-acoe}\textup{(i)} then gives the average-criterion
counterpart of Assumption~\ref{ass:excharcosdp}: action \(1\) is optimal iff
\(v_\lambda^*(x)\ge0\), and action \(0\) is optimal iff
\(v_\lambda^*(x)\le0\).

\begin{remark}[Verifying Assumption~\ref{ass:avg-acoe}]
\label{re:avg-acoe}
Average-cost MDP results can be used by rewriting reward maximization as cost
minimization with one-period cost
\[
        \tilde c_\lambda(x,a)
        \triangleq
        \lambda c(x,a)-r(x,a)
        =-r_\lambda(x,a).
\]
The resulting average-cost optimality equation is then rewritten in the reward
form \eqref{eq:avg-acoe-main}. Part \textup{(ii)} is the corresponding
policy-specific optimal-action characterization: when a policy has its own
gain--bias pair, optimality is equivalent to selecting a maximizing one-step
Q-action in every state.
\end{remark}

We shall use the following compactness criterion in the web-crawling and
noisy-channel examples. Vanishing-discount proofs of average optimality
equations commonly require relative compactness of normalized discounted value
functions; see \cite{schal93,feinbergetal12,feinbergLiang22} and
\cite[Ch.~5]{herlerLass96}. Invariant Lipschitz classes provide a convenient
route to that compactness; see, for example, \cite{hinderer05}.

For \(0\le L<\infty\), consider the class of \emph{\(L\)-Lipschitz functions}, 
\[
\mathrm{Lip}_{L}(\Stsp)
\triangleq
\{v\colon\Stsp\to\mathbb R:
        |v(x)-v(y)|\le L|x-y|,
        \ x,y\in\Stsp\}.
\]
For \(\beta\in(0,1)\), define the discounted net-reward Bellman operator
\[
(\mathcal T_{\beta,\lambda}v)(x)
\triangleq
\max_{a\in\{0,1\}}
\left\{
        r_\lambda(x,a)+\beta({\mathcal K}^a v)(x)
\right\}.
\]
For a function class \(\mathcal C\), the notation
\(\mathcal T_{\beta,\lambda}(\mathcal C)\subseteq\mathcal C\) means that
\(\mathcal T_{\beta,\lambda}v\in\mathcal C\) for every \(v\in\mathcal C\). Let
\(V_{\beta,\lambda}^*\) be the optimal value function of the discounted
\(\lambda\)-price problem.

\begin{lemma}[Invariant Lipschitz class and relative discounted rewards]
\label{lem:avg-lip-relative}
Fix \(\lambda\in\mathbb R\). Suppose \(\Stsp=[\ell,u]\), with
\(\ell,u\in\mathbb R\), that \(r_\lambda\) is bounded, and that for some
\(L_\lambda<\infty\), independent of \(\beta\),
\[
        \mathcal T_{\beta,\lambda}
        \bigl(\mathrm{Lip}_{L_\lambda}(\Stsp)\bigr)
        \subseteq
        \mathrm{Lip}_{L_\lambda}(\Stsp),
        \qquad 0<\beta<1 .
\]
Then \(V_{\beta,\lambda}^*\) is \(L_\lambda\)-Lipschitz on \(\Stsp\). Hence
\[
h_{\beta,\lambda}(x)
\triangleq
V_{\beta,\lambda}^*(x)-\inf_{y\in\Stsp}V_{\beta,\lambda}^*(y)
\]
is \(L_\lambda\)-Lipschitz and satisfies
\[
        0\le h_{\beta,\lambda}(x)
        \le L_\lambda(u-\ell),
        \qquad x\in\Stsp,
        \quad 0<\beta<1 .
\]
Thus \(\{h_{\beta,\lambda}:\beta\in(0,1)\}\) is uniformly bounded and
equicontinuous on \(\Stsp\).
\end{lemma}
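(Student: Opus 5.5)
The plan is to obtain $V_{\beta,\lambda}^*$ as the fixed point of $\mathcal T_{\beta,\lambda}$ via the contraction mapping theorem on the space of bounded functions, and to pass the Lipschitz bound through the iteration.

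Since $r_\lambda$ is bounded and $\beta<1$, $\mathcal T_{\beta,\lambda}$ is a $\beta$-contraction in the sup norm on bounded measurable functions on $\Stsp$. Here we take as given, as is standard for the discounted problem with bounded rewards and the max-form Bellman operator, that its unique fixed point is $V_{\beta,\lambda}^*$.

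\textbf{Step 1: iterate from a Lipschitz start.} Start from $v_0\equiv0\in\mathrm{Lip}_{L_\lambda}(\Stsp)$ and put $v_{n+1}=\mathcal T_{\beta,\lambda}v_n$. By the invariance hypothesis, every $v_n$ lies in $\mathrm{Lip}_{L_\lambda}(\Stsp)$. Every $v_n$ is also bounded, because $\Stsp$ is compact and Lipschitz functions on it are bounded, so all iterates stay in the space where the contraction acts. Hence $v_n\to V_{\beta,\lambda}^*$ uniformly.

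\textbf{Step 2: pass the bound to the limit.} The class $\mathrm{Lip}_{L_\lambda}(\Stsp)$ is closed under pointwise limits: letting $n\to\infty$ in $|v_n(x)-v_n(y)|\le L_\lambda|x-y|$ gives the same inequality for $V_{\beta,\lambda}^*$. So $V_{\beta,\lambda}^*$ is $L_\lambda$-Lipschitz.

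\textbf{Step 3: normalize.} $V_{\beta,\lambda}^*$ is continuous on the compact interval $[\ell,u]$, so its infimum is finite and attained at some point $y_\beta$. Subtracting a constant does not change Lipschitz constants, so $h_{\beta,\lambda}$ is $L_\lambda$-Lipschitz. It is nonnegative by construction, and
\[
h_{\beta,\lambda}(x)=h_{\beta,\lambda}(x)-h_{\beta,\lambda}(y_\beta)\le L_\lambda|x-y_\beta|\le L_\lambda(u-\ell).
\]

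\textbf{Step 4: uniform boundedness and equicontinuity.} These follow at once, since the bound $L_\lambda(u-\ell)$ and the Lipschitz constant $L_\lambda$ do not depend on $\beta$.

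The only delicate point is Step 1's identification of the fixed point of $\mathcal T_{\beta,\lambda}$ with the optimal value $V_{\beta,\lambda}^*$. Under bounded rewards this is standard discounted-MDP theory, requiring measurability of the maximizing selection, which is trivial for a binary action set. I would cite it rather than reprove it.
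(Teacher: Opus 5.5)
Your proof is correct and matches the paper's argument step for step. Value iteration from the zero function stays in $\mathrm{Lip}_{L_\lambda}$ by the invariance hypothesis, and the sup-norm contraction gives uniform convergence to $V_{\beta,\lambda}^*$, so the Lipschitz bound passes to the limit; normalizing by the infimum and using boundedness of $[\ell,u]$ then gives the $\beta$-uniform bounds, with your remarks on pointwise-limit closure and attainment of the infimum being harmless refinements.
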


\begin{proof}
The zero function belongs to \(\mathrm{Lip}_{L_\lambda}(\Stsp)\), so all
value-iteration iterates \(\mathcal T_{\beta,\lambda}^{\,n}0\) are
\(L_\lambda\)-Lipschitz. Since \(r_\lambda\) is bounded and \(\beta<1\),
\(\mathcal T_{\beta,\lambda}\) is a sup-norm contraction on bounded functions,
and the iterates converge uniformly to \(V_{\beta,\lambda}^*\). The Lipschitz
class is closed under uniform limits. Subtracting a constant preserves the
Lipschitz constant, and boundedness of \(\Stsp\) gives the displayed bound; the
common Lipschitz constant gives equicontinuity.
\end{proof}

\subsection{Vanishing-discount limits of project metrics}
\label{ss:av-limits}

For \(\beta\in(0,1)\), let
\(F_{\beta,x}(z),G_{\beta,x}(z),f_{\beta,x}(z),g_{\beta,x}(z)\) be the
discounted threshold performance and marginal metrics. Define the discounted MP
metric and index by
\[
        m_{\beta,x}(z)
        \triangleq
        \frac{f_{\beta,x}(z)}{g_{\beta,x}(z)},
        \qquad
        m_\beta(x)\triangleq m_{\beta,x}(x),
\]
where the ratio is used on its positive-resource domain.

\begin{assumption}[Discounted PCL-indexability]
\label{ass:disc-pcli}
For every \(\beta\in(0,1)\), the \(\beta\)-discounted project is
PCL-indexable with respect to threshold policies in the sense of
\cite[Definition~7]{nmmor20}, with discounted threshold metrics
\(F_{\beta,x},G_{\beta,x}\), discounted marginal metrics
\(f_{\beta,x},g_{\beta,x}\), and discounted MP index \(m_\beta\).
\end{assumption}

Under Assumption~\ref{ass:disc-pcli}, the fixed-discount results of
\cite{nmmor20} apply. In particular, the threshold and marginal metrics are
bounded c\`adl\`ag functions of the threshold, active-at-threshold values are
left limits, endpoint values are one-sided limits, \(g_{\beta,x}(z)>0\), the
index \(m_\beta\) is nondecreasing and continuous, and the discounted
Radon--Nikodym PCL identity holds:
\begin{equation}
\label{eq:disc-pcl-FG}
F_{\beta,x}(z_2)-F_{\beta,x}(z_1)
=
\int_{(z_1,z_2]}m_\beta(y)\,\mathrm dG_{\beta,x}(y),
\qquad z_1<z_2 .
\end{equation}
We also use the integration-by-parts form of the discounted marginal identity
from \cite[Lemma~22]{nmmor20}. With
\(D_{\beta,x}(z)\triangleq f_{\beta,x}(z)-m_\beta(z)g_{\beta,x}(z)\), for finite
\(z\),
\begin{equation}
\label{eq:disc-marginal-ibp}
D_{\beta,x}(z)
=
\begin{cases}
\displaystyle
\int_{(z,x)}g_{\beta,x}(y)\,\mathrm dm_\beta(y), & z<x,\\[2.5ex]
\displaystyle
-\int_{[x,z]}g_{\beta,x}(y)\,\mathrm dm_\beta(y), & x\le z,
\end{cases}
\end{equation}
where \(\mathrm dm_\beta\) is the nonnegative LS measure generated by
\(m_\beta\). This is the form that survives the average limit because the
marginal resource appears as an integrand, not as an LS integrator.

Since the average reward and resource gains of threshold policies are
state-independent under the hypotheses below, the average threshold metrics in
the notation of \S~\ref{s:pcli} are
\[
        F_x(z)\equiv F(z),
        \qquad
        G_x(z)\equiv G(z).
\]
Let \(h_F\) and \(h_G\) denote the associated bias-type threshold metrics, and
write \(a_z(x)\triangleq\1_{\{x>z\}}\). Define
\[
        \Delta r(x)\triangleq r(x,1)-r(x,0),
        \qquad
        \Delta c(x)\triangleq c(x,1)-c(x,0),
\]
and, for a measurable \(h\colon\Stsp\to\mathbb R\),
\[
        \Delta_\kappa h(x)
        \triangleq
        ({\mathcal K}^1h)(x)-({\mathcal K}^0h)(x).
\]
For threshold-dependent functions,
\(\Delta_\kappa h(\cdot,z)(x)\triangleq
({\mathcal K}^1 h(\cdot,z))(x)-({\mathcal K}^0 h(\cdot,z))(x)\). The limiting
average marginal metrics are
\begin{align}
\label{eq:avg-f-from-hF}
f_x(z)
&\triangleq
\Delta r(x)+\Delta_\kappa h_F(\cdot,z)(x),\\
\label{eq:avg-g-from-hG}
g_x(z)
&\triangleq
\Delta c(x)+\Delta_\kappa h_G(\cdot,z)(x).
\end{align}

\begin{assumption}[Vanishing-discount transfer hypotheses]
\label{ass:avg-limits}
There exist gain metrics \(F,G\colon\Zsp\to\mathbb R\), bias metrics
\(h_F,h_G\colon\Stsp\times\Zsp\to\mathbb R\), a possibly empty Borel set
\(\Exsp\subseteq\Stsp\), and a finite-valued function
\(m\colon\Stsp\to\mathbb R\) satisfying the conditions below. The functions
\(m_\beta\) and \(m\) are extended to finite thresholds outside \(\Stsp\) by
endpoint constants.

\begin{enumerate}[label=\textup{(\roman*)},leftmargin=*]

\item
For every \(x\in\Stsp\) and \(z\in\Zsp\),
\[
\bigl((1-\beta)F_{\beta,x}(z),(1-\beta)G_{\beta,x}(z)\bigr)
\to
(F(z),G(z)),
\qquad \beta\uparrow1,
\]
and
\[
\left(
F_{\beta,x}(z)-\frac{F(z)}{1-\beta},
G_{\beta,x}(z)-\frac{G(z)}{1-\beta}
\right)
\to
(h_F(x,z),h_G(x,z)),
\qquad \beta\uparrow1.
\]
The gains \(F(z)\) and \(G(z)\) are independent of \(x\). With
\[
        H^F_{\beta,z}(x)
        \triangleq
        F_{\beta,x}(z)-\frac{F(z)}{1-\beta},
        \qquad
        H^G_{\beta,z}(x)
        \triangleq
        G_{\beta,x}(z)-\frac{G(z)}{1-\beta},
\]
the bias convergence is compatible with one-step transition expectations:
for every \(a\in\{0,1\}\), \(x\in\Stsp\), and \(z\in\Zsp\),
\[
        ({\mathcal K}^a H^F_{\beta,z})(x)
        \to
        ({\mathcal K}^a h_F(\cdot,z))(x),
        \qquad
        ({\mathcal K}^a H^G_{\beta,z})(x)
        \to
        ({\mathcal K}^a h_G(\cdot,z))(x).
\]

\item
With \(f_x\) and \(g_x\) defined by
\eqref{eq:avg-f-from-hF}--\eqref{eq:avg-g-from-hG},
\[
        g_x(z)>0
\]
whenever either \(z\neq x\) or
\(x\in\Rgsp\triangleq\Stsp\setminus\Exsp\), while
\[
        g_x(x)=f_x(x)=0,
        \qquad x\in\Exsp .
\]

\item
The indices \(m_\beta\) converge locally uniformly to \(m\) on compact
finite-threshold intervals. The limit \(m\) is nondecreasing and continuous on
\(\Stsp\). On \(\Rgsp\),
\[
        m(x)=\frac{f_x(x)}{g_x(x)},
\]
whereas on \(\Exsp\) it is the specified finite-valued, nondecreasing,
continuous extension. If \(u=\infty\) and \(\infty\notin\Zsp\), then
\(m(x)\to\infty\) as \(x\to\infty\).

\item
With constant continuation outside the effective state range, the limiting
resource gain \(G\colon\mathbb R\to\mathbb R\) is finite-valued, c\`adl\`ag,
nonincreasing, and not identically constant. Its active-at-threshold values are
left limits, and, if \(\infty\in\Zsp\), \(G(\infty)\) is the corresponding
finite one-sided limit. If \(\infty\in\Zsp\), \(F(\infty)\) is also finite and
is the corresponding one-sided limit.

For each \(x\in\Stsp\), the maps \(z\mapsto f_x(z)\) and \(z\mapsto g_x(z)\)
are finite-valued, Borel measurable, and c\`adl\`ag, with
active-at-threshold values given by left limits and finite limits at
\(\infty\) when \(\infty\in\Zsp\). No locally bounded variation assumption is
imposed on these marginal maps.

\item
For every finite \(z_1<z_2\) and every \(x\in\Stsp\),
\begin{equation}
\label{eq:vd-gain}
\lim_{\beta\uparrow1}
(1-\beta)\int_{(z_1,z_2]}m_\beta(y)\,\mathrm dG_{\beta,x}(y)
=
\int_{(z_1,z_2]}m(y)\,\mathrm dG(y).
\end{equation}
For every \(x\in\Stsp\) and finite \(z\),
\begin{align}
\label{eq:vd-marginal-pos}
\lim_{\beta\uparrow1}
\int_{(z,x)}g_{\beta,x}(y)\,\mathrm dm_\beta(y)
&=
\int_{(z,x)}g_x(y)\,\mathrm dm(y),
&& z<x,\\
\label{eq:vd-marginal-neg}
\lim_{\beta\uparrow1}
\int_{[x,z]}g_{\beta,x}(y)\,\mathrm dm_\beta(y)
&=
\int_{[x,z]}g_x(y)\,\mathrm dm(y),
&& x\le z.
\end{align}
Here \(\mathrm dm_\beta\) and \(\mathrm dm\) are the nonnegative LS measures
generated by \(m_\beta\) and \(m\).

\end{enumerate}
\end{assumption}

The fixed-threshold Poisson equations follow from
Assumption~\ref{ass:avg-limits}\textup{(i)}. The discounted equations are
\[
F_{\beta,x}(z)
=
r(x,a_z(x))+
\beta({\mathcal K}^{a_z(x)}F_{\beta,\cdot}(z))(x),
\]
and similarly for \(G_{\beta,x}(z)\). Since \(F(z)\) and \(G(z)\) are constant
in the next-state variable, these equations become
\[
        F(z)+H^F_{\beta,z}(x)
        =
        r(x,a_z(x))+
        \beta({\mathcal K}^{a_z(x)}H^F_{\beta,z})(x),
\]
and
\[
        G(z)+H^G_{\beta,z}(x)
        =
        c(x,a_z(x))+
        \beta({\mathcal K}^{a_z(x)}H^G_{\beta,z})(x).
\]
Letting \(\beta\uparrow1\) gives
\begin{align}
\label{eq:avg-threshold-poisson-F}
F(z)+h_F(x,z)
&=
r(x,a_z(x))+({\mathcal K}^{a_z(x)}h_F(\cdot,z))(x),\\
\label{eq:avg-threshold-poisson-G}
G(z)+h_G(x,z)
&=
c(x,a_z(x))+({\mathcal K}^{a_z(x)}h_G(\cdot,z))(x).
\end{align}

The limiting marginal metrics also follow from item \textup{(i)}. The
discounted marginal metrics satisfy
\[
        f_{\beta,x}(z)=
        \Delta r(x)+\beta\Delta_\kappa F_{\beta,\cdot}(z)(x),
        \qquad
        g_{\beta,x}(z)=
        \Delta c(x)+\beta\Delta_\kappa G_{\beta,\cdot}(z)(x).
\]
The constant gain terms cancel in the action difference, so the centered bias
limits yield
\[
        (f_{\beta,x}(z),g_{\beta,x}(z))
        \to
        (f_x(z),g_x(z)),
        \qquad \beta\uparrow1,
\]
with \(f_x,g_x\) as in
\eqref{eq:avg-f-from-hF}--\eqref{eq:avg-g-from-hG}.

\begin{remark}
\label{re:avg-limits}
Assumption~\ref{ass:avg-limits} concerns only the vanishing-discount passage;
the fixed-discount PCL facts are inherited from \cite{nmmor20}. The functions
\(F\) and \(G\) are the long-run average reward and resource rates of threshold
policies, i.e., the leading \((1-\beta)^{-1}\) terms of the discounted
threshold metrics. The bias metrics \(h_F,h_G\) determine the average one-step
Q-metrics for threshold continuations, and additive normalizations cancel in
the marginal differences \eqref{eq:avg-f-from-hF}--\eqref{eq:avg-g-from-hG}.
The exceptional set allows \(g_x(x)\) to vanish at selected diagonal states; at
such states the MP ratio is replaced by the specified continuous extension,
often obtained as an Abelian vanishing-discount limit.

In applications, the essential checks are that the average limits have the
regularity required by the abstract framework and that the LS limit passages
\eqref{eq:vd-gain}--\eqref{eq:vd-marginal-neg} are valid. Local bounded
variation of \(f_x\) and \(g_x\) is not required, because the marginal identity
uses \(g_x\) as an integrand with respect to \(\mathrm dm\), not as an
integrator.
\end{remark}

\begin{remark}[Checking the LS limit passages]
\label{re:vd-criterion}
The gain passage \eqref{eq:vd-gain} can be verified on compact threshold
intervals by convergence of the signed LS measures generated by
\((1-\beta)G_{\beta,x}\), including endpoint atoms, together with uniform local
total-variation control and locally uniform convergence \(m_\beta\to m\). The
marginal passages \eqref{eq:vd-marginal-pos}--\eqref{eq:vd-marginal-neg} are
different: the measures are generated by \(m_\beta\), not by
\(g_{\beta,x}\). Since \(m_\beta\to m\) locally uniformly and the
\(m_\beta\)'s are nondecreasing and continuous, \(\mathrm dm_\beta\) converges
weakly on compact intervals to \(\mathrm dm\). In concrete models this is often
combined with convergence of the integrands, dominated convergence when common
densities are available, or atomic/itinerary calculations when threshold
metrics are step functions.
\end{remark}

\subsection{Transfer theorem}
\label{ss:avg-transfer}

We now collect the consequences of the discounted PCL hypothesis and the
vanishing-discount limits: the limiting metrics satisfy the abstract PCL
conditions, and the average \(\lambda\)-price problems have the optimal-action
structure needed for Theorem~\ref{the:pcliii}.

\begin{theorem}[Transfer of discounted PCL-indexability to the average criterion]
\label{the:avg-pcli-transfer}
Assume Assumptions~\ref{ass:avg-acoe}, \ref{ass:disc-pcli}, and
\ref{ass:avg-limits}. Then the average threshold metrics
\(F_x(z)\equiv F(z)\) and \(G_x(z)\equiv G(z)\), together with the marginal
metrics \(f_x(z)\) and \(g_x(z)\), satisfy
Assumptions~\ref{ass:regFG} and~\ref{ass:regfg} and the PCL-indexability
conditions of Definition~\ref{def:pcli}, with MP index \(m\). Moreover, the
average \(\lambda\)-price problems satisfy the optimal-action characterization
required in Theorem~\ref{the:pcliii}. Consequently, the average-criterion
project is threshold-indexable. Its Whittle index is \(m\), with the
exceptional-state extension specified in
Assumption~\ref{ass:avg-limits}\textup{(iii)}, and its optimal threshold maps
are precisely the generalized inverses of \(m\).
\end{theorem}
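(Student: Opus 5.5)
The plan is to check, one at a time, the hypotheses of Theorem~\ref{the:pcliii} for the limiting average metrics, and then apply its part~(a).

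\emph{Regularity.} Assumption~\ref{ass:regFG} for \(G_x\equiv G\) is Assumption~\ref{ass:avg-limits}(iv). For \(F_x\equiv F\), I will not assume regularity directly. Instead I define \(F\) on compact intervals through the gain identity. Multiply the discounted Radon--Nikodym identity \eqref{eq:disc-pcl-FG} by \((1-\beta)\) and let \(\beta\uparrow1\). Item (i) gives pointwise convergence of the left side to \(F(z_2)-F(z_1)\), and \eqref{eq:vd-gain} gives convergence of the right side. This yields (PCLI3a),
\[
F(z_2)-F(z_1)=\int_{(z_1,z_2]}m(y)\,\mathrm dG(y).
\]
Since \(m\) is continuous, hence locally bounded, and \(G\) is c\`adl\`ag of locally bounded variation (it is monotone), the right side shows that \(F\) is c\`adl\`ag with locally bounded variation. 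Its left limits are \(F(z-)=F(z)-m(z)\Delta G(z)\). To identify these with \(F(z^{\scriptscriptstyle -})\), I will apply the same limit argument to the discounted atom relation \((1-\beta)[F_{\beta,x}(z^{\scriptscriptstyle -})-F_{\beta,x}(z)]=m_\beta(z)(1-\beta)[G_{\beta,x}(z^{\scriptscriptstyle -})-G_{\beta,x}(z)]\), using item (i) at the active-at-threshold policy. The endpoint limits at \(\infty\) are given in (iv). Assumption~\ref{ass:regfg} for \(f_x,g_x\) is stated verbatim in (iv).

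\emph{PCL conditions.} (PCLI1) is item (ii). (PCLI2) is item (iii), and nonatomicity of \(\mu_m\) follows from continuity of \(m\). For (PCLI3b), I will pass to the limit in \eqref{eq:disc-marginal-ibp}. The left side \(D_{\beta,x}(z)=f_{\beta,x}(z)-m_\beta(z)g_{\beta,x}(z)\) converges to \(f_x(z)-m(z)g_x(z)\). This uses the marginal convergence \((f_{\beta,x},g_{\beta,x})\to(f_x,g_x)\) derived from item (i) via the transition-compatibility clause (the gain terms cancel in \(\Delta_\kappa\)), together with the pointwise convergence \(m_\beta(z)\to m(z)\) from (iii). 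The right sides converge by \eqref{eq:vd-marginal-pos}--\eqref{eq:vd-marginal-neg}.

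The case \(z=\infty\in\Zsp\) needs a separate argument, since the limiting form is not among the hypotheses. I will take \(b\to\infty\) in the finite identity at \(z=b\). The left side tends to \(f_x(\infty)-m(\infty)g_x(\infty)\) by the one-sided limits in (iv) and \(m(b)\to m(\infty)<\infty\). The right side converges by monotone convergence, because \(g_x>0\) off \(\{x\}\) and \(\mu_m\) has no atom at \(x\). I expect this step to be the main obstacle. It requires care that \(g_x(\infty)\) agrees with \(\lim_b g_x(b)\), which (iv) provides.

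\emph{Optimality structure.} I must verify Assumption~\ref{ass:excharcosdp} with \(Q\)-values taken as bias \(Q\)-values. Part (i) comes from Assumption~\ref{ass:avg-acoe}(i). Part (ii) comes from the definition \eqref{eq:Vlambastarav}. For part (iii), take a threshold policy \(\pi^z\). The Poisson equations \eqref{eq:avg-threshold-poisson-F}--\eqref{eq:avg-threshold-poisson-G} give it the gain \(F(z)-\lambda G(z)\) and the bias \(h_F-\lambda h_G\). Then \(Q^{h}_\lambda(x,1)-Q^{h}_\lambda(x,0)=v_{\lambda,x}(z)\), so Assumption~\ref{ass:avg-acoe}(ii) gives exactly the sign characterization in Assumption~\ref{ass:excharcosdp}(iii), with \(v^*_\lambda=v_\lambda(\cdot,\pi^z)\).

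Strictly, (iii) is only needed for threshold policies, since those are the only ones used in the proof of Theorem~\ref{the:pcliii}(a). I will note this explicitly rather than claim it for all SD policies. Theorem~\ref{the:pcliii}(a) then gives threshold-indexability, Whittle index \(m\), and the generalized-inverse characterization of the optimal threshold maps.
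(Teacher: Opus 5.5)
Your proposal is correct and follows essentially the same route as the paper. It reads off \textup{(PCLI1)}--\textup{(PCLI2)} from Assumption~\ref{ass:avg-limits}(ii)--(iii), obtains \textup{(PCLI3a)} and the regularity of $F$ by multiplying \eqref{eq:disc-pcl-FG} by $1-\beta$ and using \eqref{eq:vd-gain}, and obtains \textup{(PCLI3b)} by passing to the limit in \eqref{eq:disc-marginal-ibp} via \eqref{eq:vd-marginal-pos}--\eqref{eq:vd-marginal-neg}. It then takes the optimal-action structure from the threshold Poisson equations and Assumption~\ref{ass:avg-acoe}(ii), and concludes with Theorem~\ref{the:pcliii}(a).

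Your separate $b\to\infty$ argument for the $z=\infty$ form of \textup{(PCLI3b)} covers a case the paper's proof leaves implicit. By contrast, your atom argument for $F(z^{\scriptscriptstyle -})=F(z-)$ tacitly applies item~(i) to active-at-threshold policies, which that item does not literally cover; the paper settles this point by appeal to the fixed-discount convention instead.
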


\begin{proof}
Assumption~\ref{ass:avg-acoe}\textup{(i)} gives the optimal average Q-value
characterization. For a fixed threshold \(z\), set
\[
        h_{\lambda,z}(x)
        \triangleq
        h_F(x,z)-\lambda h_G(x,z).
\]
Combining \eqref{eq:avg-threshold-poisson-F} and
\eqref{eq:avg-threshold-poisson-G} yields
\[
F(z)-\lambda G(z)+h_{\lambda,z}(x)
=
r_\lambda(x,a_z(x))+
({\mathcal K}^{a_z(x)}h_{\lambda,z})(x).
\]
The corresponding one-step action difference is
\[
\begin{split}
&\left[r_\lambda(x,1)+({\mathcal K}^1h_{\lambda,z})(x)\right]
-
\left[r_\lambda(x,0)+({\mathcal K}^0h_{\lambda,z})(x)\right] \\
&\qquad = f_x(z)-\lambda g_x(z).
\end{split}
\]
Thus \(f_x\) and \(g_x\) are the average marginal Q-metrics for threshold
continuations. Assumption~\ref{ass:avg-acoe}\textup{(ii)} gives the
policy-specific optimal-action characterization for such continuations, which
is the structure used in the verification part of Theorem~\ref{the:pcliii}.

Assumption~\ref{ass:avg-limits}\textup{(ii)} gives \textup{(PCLI1)}, and item
\textup{(iii)} gives \textup{(PCLI2)}. Item \textup{(iv)} gives the
c\`adl\`ag regularity of \(G\) and of the marginal maps; since \(G\) is
finite-valued and nonincreasing, it has locally bounded variation on finite
threshold intervals. It remains to verify \textup{(PCLI3a)}, the corresponding
regularity of \(F\), and \textup{(PCLI3b)}.

Multiplying \eqref{eq:disc-pcl-FG} by \(1-\beta\) and letting
\(\beta\uparrow1\), using Assumption~\ref{ass:avg-limits}\textup{(i)} and
\eqref{eq:vd-gain}, gives, for finite \(z_1<z_2\),
\[
F(z_2)-F(z_1)
=
\int_{(z_1,z_2]}m(y)\,\mathrm dG(y).
\]
This is \textup{(PCLI3a)} for \(F_x(z)\equiv F(z)\) and
\(G_x(z)\equiv G(z)\). It also gives local bounded variation of \(F\): on a
compact finite interval \(I\),
\[
        \operatorname{TV}_I(F)
        \le
        \sup_{y\in I}|m(y)|\,\operatorname{TV}_I(G)<\infty .
\]
Right-continuity and left limits of \(F\) follow from the LS identity and the
corresponding properties of \(G\); active-at-threshold values are these left
limits, as in the fixed-discount convention of \cite[Lemma~10]{nmmor20}.
Therefore Assumption~\ref{ass:regFG} holds; Assumption
\ref{ass:avg-limits}\textup{(iv)} gives Assumption~\ref{ass:regfg}.

For the marginal identity, use \eqref{eq:disc-marginal-ibp}. If \(z<x\), then
\[
f_{\beta,x}(z)-m_\beta(z)g_{\beta,x}(z)
=
\int_{(z,x)}g_{\beta,x}(y)\,\mathrm dm_\beta(y).
\]
Passing to the limit, using the marginal-metric convergence derived above, the
locally uniform convergence \(m_\beta\to m\), and \eqref{eq:vd-marginal-pos},
yields
\[
f_x(z)-m(z)g_x(z)
=
\int_{(z,x)}g_x(y)\,\mathrm dm(y).
\]
The case \(x\le z\) follows from \eqref{eq:vd-marginal-neg} and gives
\[
f_x(z)-m(z)g_x(z)
=
-\int_{[x,z]}g_x(y)\,\mathrm dm(y).
\]
Thus \textup{(PCLI3b)} holds.

The average metrics satisfy \textup{(PCLI1)}, \textup{(PCLI2)},
\textup{(PCLI3a)}, and \textup{(PCLI3b)}. The threshold-indexability result, the
identification of the Whittle index with \(m\), and the generalized-inverse
description of the optimal threshold maps follow from Theorem~\ref{the:pcliii}.
\end{proof}

\begin{remark}
\label{re:avg-transfer-summary}
The theorem uses two distinct limit passages. The first is a threshold-metric
LS passage involving the signed resource threshold measure \(\mathrm dG\). Once
\(G\) is c\`adl\`ag and nonincreasing, this passage is usually checked through
convergence of the signed LS measures generated by \((1-\beta)G_{\beta,x}\),
together with locally uniform convergence of \(m_\beta\) to \(m\). The second is
a marginal integration-by-parts passage involving the positive index measures
\(\mathrm dm_\beta\to\mathrm dm\). It does not require \(g_x\) to have locally
bounded variation; this is essential in the average-criterion examples below.
\end{remark}

\section{Application examples}
\label{s:aexamples}
This section applies the average-transfer theorem to three scalar-state models:
two benchmark models for which it recovers known long-run average indices, and
one scalar Kalman-filter model where it isolates the remaining exceptional-state
limits needed for a complete average-cost theorem.

\subsection{Optimal web crawling under the long-run average criterion}
\label{s:owcavg}

We first consider the deterministic web crawling model analyzed directly in
\cite{avraBorkar18} under the long-run average criterion and through discounted
PCL-indexability in \cite[\S~12.1]{nmmor20}. The goal is to recover the
average index from the discounted threshold metrics via
Theorem~\ref{the:avg-pcli-transfer}.

The one-step reward and resource consumption are \(r(x,a)=xa\) and
\(c(x,a)=Ca\), with \(C>0\). The state evolves as
\[
X_{t+1}
=
\begin{cases}
\ell+\alpha X_t, & A_t=0,\\
\ell, & A_t=1,
\end{cases}
\qquad 0<\alpha<1,
\]
where \(\ell=(1-\alpha)b\), \(b>0\), and
\(\Stsp=[\ell,u]\), with \(u=\ell/(1-\alpha)=b\). Hence
\(\Zsp=\mathbb R\); thresholds \(z<\ell\) and \(z\ge u\) represent,
respectively, the all-active and all-passive policies.

Let \(\phi(x)\triangleq \ell+\alpha x\) and
\(\phi_t(x)\triangleq u-(u-x)\alpha^t\), \(t=0,1,\ldots\). For
\(x\in\Stsp\) and \(z\in\mathbb R\), define
\[
\tau(x,z)\triangleq \min\{t\ge1\colon\phi_t(x)>z\},
\]
with \(\tau(x,z)=\infty\) if the set is empty. For \(\ell\le z<u\), write
\(t(z)\triangleq\tau(\ell,z)\), so that
\(\phi_{t(z)-1}(\ell)\le z<\phi_{t(z)}(\ell)\).

\begin{proposition}
\label{pro:owc_avg_acoe}
Assumption~\ref{ass:avg-acoe} holds for the web crawling model.
\end{proposition}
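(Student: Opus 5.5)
We prove Assumption~\ref{ass:avg-acoe} by the vanishing-discount route, using Lemma~\ref{lem:avg-lip-relative} for compactness. Fix \(\lambda\in\mathbb R\). The state space \(\Stsp=[\ell,u]\) is compact and \(r_\lambda(x,a)=xa-\lambda Ca\) is bounded there. The first step is to find a Lipschitz constant \(L_\lambda\), independent of \(\beta\), with \(\mathcal T_{\beta,\lambda}(\mathrm{Lip}_{L_\lambda}(\Stsp))\subseteq\mathrm{Lip}_{L_\lambda}(\Stsp)\). Take \(v\in\mathrm{Lip}_L\). The active branch \(x-\lambda C+\beta v(\ell)\) is \(1\)-Lipschitz in \(x\). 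The passive branch \(\beta v(\ell+\alpha x)\) is \(\beta\alpha L\)-Lipschitz. A maximum of two Lipschitz functions is Lipschitz with the larger constant. Hence any \(L_\lambda\ge\max\{1,\alpha L_\lambda\}\) works, and \(L_\lambda=1\) suffices since \(\alpha<1\). Lemma~\ref{lem:avg-lip-relative} then makes the relative values \(h_{\beta,\lambda}\) uniformly bounded and \(1\)-Lipschitz. In addition, \((1-\beta)\inf_y V^*_{\beta,\lambda}(y)\) is bounded by \(\sup|r_\lambda|\).

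Next, choose \(\beta_n\uparrow1\) along which \((1-\beta_n)\inf V^*_{\beta_n,\lambda}\to\rho_\lambda\) and, by Arzel\`a--Ascoli, \(h_{\beta_n,\lambda}\to h_\lambda\) uniformly, with \(h_\lambda\) \(1\)-Lipschitz. The discounted equation can be written as
\[
(1-\beta)\inf V^*_{\beta,\lambda}+h_{\beta,\lambda}(x)=\max_a\{r_\lambda(x,a)+\beta(\mathcal K^a h_{\beta,\lambda})(x)\}.
\]
The kernels are deterministic (Dirac masses at \(\ell\) and \(\phi(x)\)), so uniform convergence passes directly through \(\mathcal K^a\) and through the maximum. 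This gives the ACOE \eqref{eq:avg-acoe-main} with bounded \(h_\lambda\).

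Standard verification then completes part (i). Iterating the ACOE under an SD maximizing selector gives \(T\rho_\lambda+h_\lambda(x)=\mathbb E\sum_{t<T} r_\lambda(X_t,A_t)+\mathbb E h_\lambda(X_T)\). Because \(h_\lambda\) is bounded, dividing by \(T\) shows that the selector has average value \(\rho_\lambda\) from every state. For an arbitrary policy the same iteration gives an inequality, so \(V^*_\lambda\equiv\rho_\lambda\). A measurable maximizing selector exists because there are only two actions and both Q-functions are continuous.

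For part (ii), let \(\pi\) be an SD policy with a gain--bias pair \((\rho^\pi_\lambda,h^\pi_\lambda)\). We restrict to policies whose bias is bounded, which is the class arising from threshold continuations. The threshold biases are bounded because every trajectory enters a finite periodic itinerary. The sufficiency direction ("if") repeats the verification argument: if \(\pi\) maximizes \(Q^{h^\pi_\lambda}\), then \((\rho^\pi_\lambda,h^\pi_\lambda)\) solves the ACOE and \(\pi\) attains the optimal average.

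The necessity direction ("only if") is the main obstacle. Suppose some state \(x_0\) has a strictly better action than \(\pi(x_0)\) under \(Q^{h^\pi_\lambda}\). An average-optimal policy need not be conserving at states that are visited only transiently, so a direct contradiction is not available. The plan is to use the deterministic, unichain structure. From any initial state, the chain under any policy enters a periodic orbit; the possible orbits either pass through \(\ell\) or converge to \(u\).

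First, apply the standard fact that an average-optimal policy must be conserving on the recurrent states it visits. Second, handle transient states as follows. Under the convention that \(\pi\)'s bias is its own relative value, the Poisson equation holds for \(\pi\) everywhere. Since \(\rho^\pi_\lambda=\rho_\lambda\), the difference \(h_\lambda-h^\pi_\lambda\) is superharmonic along \(\pi\)'s trajectories. Boundedness together with eventual entry into the common reset state \(\ell\) should then force equality up to a constant, yielding the argmax condition. If this step resists a clean general argument, we fall back on verifying (ii) only for the threshold policies that Theorem~\ref{the:pcliii} actually uses. For those policies \(h^\pi_\lambda\) is explicit via the hitting times \(\tau(x,z)\), and the claim can be checked by comparing with \(h_\lambda\) directly.
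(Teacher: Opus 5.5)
Your part (i) and the ``if'' half of (ii) are sound, and they follow the paper's route. You use the same invariance \(\mathcal T_{\beta,\lambda}(\mathrm{Lip}_1(\Stsp))\subseteq\mathrm{Lip}_1(\Stsp)\) and Lemma~\ref{lem:avg-lip-relative}. The paper then invokes a standard compact vanishing-discount theorem, whereas you carry out the Arzel\`a--Ascoli passage (trivial with Dirac kernels) and the bounded-bias verification yourself.

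The gap is the ``only if'' half of (ii), and your superharmonicity step does not close it. Let \(d\triangleq h_\lambda-h_\lambda^\pi\), assume \(\rho_\lambda^\pi=\rho_\lambda\), and let \(x^+\) denote the \(\pi\)-successor of \(x\). The ACOE and the Poisson equation give
\[
d(x)-d(x^+)=\max_{a}Q_\lambda^{h_\lambda}(x,a)-Q_\lambda^{h_\lambda}(x,\pi(x))\ge0 .
\]
So \(d\) is constant on the cycle that \(\pi\)-trajectories enter. At a transient state, \(d\) exceeds that constant by the greedy slack accumulated along the state's finite path into the cycle. Reaching \(\ell\) in finitely many steps and boundedness make this slack finite, not zero. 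Average optimality is blind to finitely many transient decisions, which is exactly the obstruction you flagged yourself.

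No other argument can close the gap either: the converse is false for this model, already for threshold policies, so your fallback fails too.

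\emph{Setup.} Fix \(z^*\in(\ell,u)\), let \(\lambda\triangleq m(z^*)\) and \(t\triangleq t(z^*)\), and pick \(z'\in(z^*,\phi_t(\ell))\), so that \(t(z')=t\). The identity \(m(z^*)G(z^*)=z^*-\phi(z^*)+F(z^*)\) gives \(F(z^*)-\lambda G(z^*)=(1-\alpha)(u-z^*)\). By \eqref{eq:owc_avg_FG} the same value holds at \(z'\). Then \eqref{eq:owc_avg_fg} yields, for \(z\in\{z^*,z'\}\),
\[
f_x(z)-\lambda g_x(z)=(1-\alpha)\sum_{i=0}^{\tau(x,z)-1}\bigl(\phi_i(x)-z^*\bigr).
\]

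\emph{Both policies are optimal.} At \(z=z^*\) this expression has the sign of \(x-z^*\). Hence the \(z^*\)-policy is greedy for its own bounded bias, and it is optimal by the sufficiency half you proved, with gain \(\rho_\lambda=(1-\alpha)(u-z^*)\). From every state, the \(z'\)-policy enters the same cycle \(\ell\to\phi_1(\ell)\to\cdots\to\phi_t(\ell)\to\ell\) after a finite transient. So it also attains \(\rho_\lambda\) everywhere and is \(P_\lambda^{\rm av}\)-optimal.

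\emph{The argmax condition fails.} At the transient states \(x\in(z^*,z']\), the \(z'\)-policy is passive. Consider its own bias \(h=h_F(\cdot,z')-\lambda h_G(\cdot,z')\); by the proof of Theorem~\ref{the:avg-pcli-transfer}, the display at \(z=z'\) equals \(Q_\lambda^{h}(x,1)-Q_\lambda^{h}(x,0)\). This is strictly positive because \(\phi_i(x)\ge x>z^*\). So the argmax condition fails for an optimal policy.

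The paper's proof defers the optimal-action characterization to a standard vanishing-discount theorem, and it does not supply this direction either. Such theorems give the ACOE and the optimality of maximizing selectors, not this converse. The right repair is to state (ii) in its sufficiency form, rather than to look for a necessity proof. That form is what your argument proves (for bounded biases), and it is what the verification step of Theorem~\ref{the:pcliii} uses.
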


\begin{proof}
Fix \(\lambda\in\mathbb R\). The state and action spaces are compact and
finite, respectively, and \(r_\lambda(x,a)=xa-\lambda Ca\) is bounded and
continuous. The transition kernel is weakly continuous because
\(\kappa_x^0=\delta_{\phi(x)}\), \(\kappa_x^1=\delta_\ell\), and
\(\phi\) is continuous.

For \(\beta\in(0,1)\),
\[
(\mathcal T_{\beta,\lambda}v)(x)
=
\max\{x-\lambda C+\beta v(\ell),\,\beta v(\phi(x))\}.
\]
If \(v\) is \(1\)-Lipschitz, the active branch is \(1\)-Lipschitz and the
passive branch is \(\alpha\beta\)-Lipschitz; hence
\(\mathcal T_{\beta,\lambda}(\mathrm{Lip}_1(\Stsp))\subseteq
\mathrm{Lip}_1(\Stsp)\). Lemma~\ref{lem:avg-lip-relative} gives uniformly
bounded equicontinuous normalized discounted relative reward functions.
Standard compact weak-continuity vanishing-discount results, applied to the
cost \(\tilde c_\lambda(x,a)=\lambda c(x,a)-r(x,a)\), yield the average
optimality equation and optimal-action characterization; see, e.g.,
\citet[Theorem~3.2]{feinbergLiang22}.
\end{proof}

For \(\ell\le z<u\), \cite[\S~12.1]{nmmor20} gives
\[
F_{\beta,\ell}(z)
=
\frac{\beta^{t(z)}\phi_{t(z)}(\ell)}{1-\beta^{t(z)+1}},
\qquad
G_{\beta,\ell}(z)
=
\frac{\beta^{t(z)}C}{1-\beta^{t(z)+1}},
\]
and, for arbitrary \(x\in\Stsp\),
\[
F_{\beta,x}(z)
=
\begin{cases}
x+\beta F_{\beta,\ell}(z), & x>z,\\
\beta^{\tau(x,z)}
\bigl(\phi_{\tau(x,z)}(x)+\beta F_{\beta,\ell}(z)\bigr), & x\le z,
\end{cases}
\]
\[
G_{\beta,x}(z)
=
\begin{cases}
C+\beta G_{\beta,\ell}(z), & x>z,\\
\beta^{\tau(x,z)}
\bigl(C+\beta G_{\beta,\ell}(z)\bigr), & x\le z.
\end{cases}
\]
For \(z<\ell\), the policy is all-active, so
\(F_{\beta,x}(z)=x+\beta\ell/(1-\beta)\) and
\(G_{\beta,x}(z)=C/(1-\beta)\). For \(z\ge u\), it is all-passive, so
\(F_{\beta,x}(z)=G_{\beta,x}(z)=0\). Thus
\begin{equation}
\label{eq:owc_avg_FG}
F(z)=
\begin{cases}
\ell, & z<\ell,\\
\phi_{t(z)}(\ell)/(t(z)+1), & \ell\le z<u,\\
0, & z\ge u,
\end{cases}
\qquad
G(z)=
\begin{cases}
C, & z<\ell,\\
C/(t(z)+1), & \ell\le z<u,\\
0, & z\ge u.
\end{cases}
\end{equation}
The bias limits exist because the discounted metrics are rational in \(\beta\)
with at most a simple pole at \(1\); transition compatibility is immediate from
the next states \(\ell\) and \(\phi(x)\).

The discounted marginal metrics are
\[
f_{\beta,x}(z)=x+\beta F_{\beta,\ell}(z)-\beta F_{\beta,\phi(x)}(z),
\qquad
g_{\beta,x}(z)=C+\beta G_{\beta,\ell}(z)-\beta G_{\beta,\phi(x)}(z).
\]
Letting \(\beta\uparrow1\), for \(\ell\le z<u\), gives
\begin{equation}
\label{eq:owc_avg_fg}
f_x(z)
=
x-\phi_{\tau(x,z)}(x)+\tau(x,z)F(z),
\qquad
g_x(z)
=
\tau(x,z)G(z)
=
\frac{\tau(x,z)}{t(z)+1}C .
\end{equation}
For boundary-equivalent thresholds, \(f_x(z)=(1-\alpha)x\) and \(g_x(z)=C\)
when \(z<\ell\), while \(f_x(z)=x\) and \(g_x(z)=C\) when \(z\ge u\).
Since \(\tau(x,x)=1\) for \(\ell\le x<u\), the average MP index is
\begin{equation}
\label{eq:owc_avg_index}
m(x)
=
\frac{(t+1)(x-\phi(x))+\phi_t(\ell)}{C},
\qquad
\phi_{t-1}(\ell)\le x<\phi_t(\ell),\quad t=1,2,\ldots,
\qquad
m(u)=u/C .
\end{equation}
The endpoint value is the limit as \(x\uparrow u\). With
\(D_x(z)\triangleq f_x(z)-m(z)g_x(z)\), the identity
\(m(z)G(z)=z-\phi(z)+F(z)\) yields, for \(\ell\le z<u\),
\begin{equation}
\label{eq:owc_avg_D_identity}
D_x(z)
=
x-\phi_{\tau(x,z)}(x)+\tau(x,z)(\phi(z)-z)
=
(1-\alpha)
\sum_{i=0}^{\tau(x,z)-1}\bigl(\phi_i(x)-z\bigr).
\end{equation}
For \(z<\ell\), \(D_x(z)=(1-\alpha)(x-\ell)\); for \(z\ge u\),
\(D_x(z)=x-u\).

\begin{proposition}
\label{pro:owc_avg_limits}
For the web crawling model, Assumptions~\ref{ass:disc-pcli} and
\ref{ass:avg-limits} hold with \(\Exsp=\varnothing\).
\end{proposition}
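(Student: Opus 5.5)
Assumption~\ref{ass:disc-pcli} is inherited from \cite[\S~12.1]{nmmor20}, which establishes discounted PCL-indexability of the web crawling model for every \(\beta\in(0,1)\). The work is therefore in Assumption~\ref{ass:avg-limits}, with \(\Exsp=\varnothing\), checked item by item from the explicit formulas \eqref{eq:owc_avg_FG}--\eqref{eq:owc_avg_D_identity}.

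\textbf{Item (i).} The discounted metrics are rational in \(\beta\) with at most a simple pole at \(1\). Multiplying by \(1-\beta\) gives \(F(z)\) and \(G(z)\), independent of \(x\) because \(\tau(x,z)<\infty\) for \(z<u\). Subtracting the pole leaves finite bias limits \(h_F(x,z),h_G(x,z)\). Transition compatibility holds because \(\kappa^a_x\) are Dirac masses at \(\ell\) and \(\phi(x)\), so \({\mathcal K}^aH_{\beta,z}\) is just evaluation of \(H_{\beta,z}\) at a point.

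\textbf{Item (ii).} For \(\ell\le z<u\), \eqref{eq:owc_avg_fg} gives \(g_x(z)=\tau(x,z)C/(t(z)+1)>0\), since \(\tau\ge1\). The boundary cases give \(g_x(z)=C>0\). So (PCLI1) holds everywhere, including on the diagonal, and \(\Exsp=\varnothing\).

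\textbf{Item (iii).} I would first show that the index in \eqref{eq:owc_avg_index} is continuous and nondecreasing.
\begin{itemize}
\item On each interval \([\phi_{t-1}(\ell),\phi_t(\ell))\), \(m\) is affine with slope \((t+1)(1-\alpha)/C>0\).
\item At a breakpoint \(x=\phi_t(\ell)\), the left limit is \([(t+1)(1-\alpha)\phi_t(\ell)-(t+1)\ell+\phi_t(\ell)]/C\). The right value is \([(t+2)((1-\alpha)\phi_t(\ell)-\ell)+\phi_{t+1}(\ell)]/C\).
\item Their difference is \((1-\alpha)\phi_t(\ell)-\ell+\phi_{t+1}(\ell)-\phi_t(\ell)\). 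Since \(\phi_{t+1}(\ell)=\ell+\alpha\phi_t(\ell)\), this equals \(0\).
\item Continuity at \(u\) follows from \(\phi_t(\ell)\uparrow u\) with \((t+1)(u-\phi_t(\ell))\to0\).
\end{itemize}
Next, locally uniform convergence \(m_\beta\to m\) would come from the discounted index formula in \cite[\S~12.1]{nmmor20}. That formula is piecewise in \(t\) with coefficients rational in \(\beta\), converging uniformly on each piece. Near \(u\), infinitely many pieces accumulate, and I would use equicontinuity of \(m_\beta\) to get uniformity there.

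\textbf{Item (iv).} \(G\) is a step function, right-continuous with jumps at the \(\phi_t(\ell)\), nonincreasing from \(C\) to \(0\). It has a left limit at \(u\) because \(C/(t+1)\to0\). The marginal maps \(f_x,g_x\) are piecewise constant in \(z\), changing when \(z\) crosses \(\phi_t(\ell)\) or \(\phi_i(x)\). They are càdlàg by the strict inequality defining \(\tau\), and the active-at-threshold values are left limits.

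\textbf{Item (v), the main obstacle.}
\begin{itemize}
\item \emph{Gain passage \eqref{eq:vd-gain}.} By \eqref{eq:disc-pcl-FG}, the left side equals \((1-\beta)[F_{\beta,x}(z_2)-F_{\beta,x}(z_1)]\), which converges to \(F(z_2)-F(z_1)\). The claim therefore reduces to checking directly that \(F(z_2)-F(z_1)=\int_{(z_1,z_2]}m\,\mathrm dG\). Since \(G\) jumps only at \(\phi_t(\ell)\), this is a telescoping atom identity. At each atom one checks \(F(z-)-F(z)=m(z)(G(z-)-G(z))\) using \(m(z)G(z)=z-\phi(z)+F(z)\).
\item \emph{Marginal passages \eqref{eq:vd-marginal-pos}--\eqref{eq:vd-marginal-neg}.} The right sides hold directly: compute \(\int g_x\,\mathrm dm\) over the pieces on which \(\tau(x,\cdot)\) and \(t(\cdot)\) are constant, and compare with \(D_x(z)\) in \eqref{eq:owc_avg_D_identity}. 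The left sides equal \(\pm D_{\beta,x}(z)\) by \eqref{eq:disc-marginal-ibp}, and these converge to \(D_x(z)\) by pointwise convergence of \(f_\beta,g_\beta\) and \(m_\beta\).
\item Equating the two sides gives the limits. If the direct verification is awkward, a fallback is dominated convergence: \(g_{\beta,x}\) is uniformly bounded on compacts, converges off a countable set, and \(\mathrm dm\) is nonatomic.
\item The delicate spot is the accumulation of breakpoints near \(u\). There I would check boundedness of \(\tau/(t+1)\) and use continuity of \(m\) at \(u\).
\end{itemize}
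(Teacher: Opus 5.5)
Your proposal is correct and follows essentially the same route as the paper. Both inherit discounted PCL-indexability from \cite{nmmor20} and read the gain, bias, and marginal limits and $g_x>0$ off the explicit formulas. Both reduce \eqref{eq:vd-gain} to atom-slope checks at the points $\phi_t(\ell)$. Both verify the limiting marginal identity directly, using $g_x(y)\,\mathrm dm(y)=(1-\alpha)\tau(x,y)\,\mathrm dy$, and then equate it with the limit of the discounted integration-by-parts identity.

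The only divergence is at the accumulation point $u$. There the paper obtains uniform convergence $m_\beta\to m$ from P\'olya's theorem (nondecreasing $m_\beta$ with a continuous limit on $[\ell,u]$). That is quicker than your equicontinuity claim, which you state without proof. Because the piecewise slopes of $m_\beta$ are not uniformly bounded near $u$, equicontinuity needs a direct estimate, although it does hold.
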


\begin{proof}
Assumption~\ref{ass:disc-pcli} follows from \cite[Proposition~8]{nmmor20}.
The displayed formulas give the required gain, bias, and marginal limits. For
\(\ell\le z<u\), \(g_x(z)=\tau(x,z)C/(t(z)+1)>0\), and the boundary formulas
give \(g_x(z)=C>0\); hence \(\Exsp=\varnothing\).

The index in \eqref{eq:owc_avg_index} is affine with slope
\((t+1)(1-\alpha)/C>0\) on each interval
\([\phi_{t-1}(\ell),\phi_t(\ell))\), with matching one-sided values and
\(m(u)=u/C\). Thus \(m\) is continuous and strictly increasing. The discounted
indices are continuous, nondecreasing, and piecewise affine with the same
breakpoints, and their coefficients converge interval by interval to those in
\eqref{eq:owc_avg_index}. P\'olya's uniform-convergence theorem gives uniform
convergence on \(\Stsp\), hence locally uniform convergence after endpoint
constant continuation \cite[Problems~II.126--II.127, pp.~63,
225]{polyaSzego98}.

The functions \(F\) and \(G\) in \eqref{eq:owc_avg_FG} are c\`adl\`ag step
functions of locally bounded variation; the only jump accumulation is at
\(u\), with summable jump tails. Moreover, \(G\) is nonincreasing and
nonconstant. For each fixed \(x\), \(f_x\) and \(g_x\) are finite-valued
c\`adl\`ag step functions.

For the threshold LS passage, the discounted PCL identity gives
\[
(1-\beta)\int_{(z_1,z_2]}m_\beta(y)\,\mathrm dG_{\beta,x}(y)
=
(1-\beta)\bigl(F_{\beta,x}(z_2)-F_{\beta,x}(z_1)\bigr),
\]
whose limit is \(F(z_2)-F(z_1)\). Since \(F\) and \(G\) are step functions with
summable jumps, it suffices to check atom slopes. At \(z=\phi_t(\ell)\),
\[
m(z)
=
\frac{F(z^{\scriptscriptstyle -})-F(z)}
     {G(z^{\scriptscriptstyle -})-G(z)}
=
\frac{(t+2)\phi_t(\ell)-(t+1)\phi_{t+1}(\ell)}{C},
\]
which agrees with \eqref{eq:owc_avg_index}. Hence \eqref{eq:vd-gain} and
\textup{(PCLI3a)} hold.

For the marginal passage, \eqref{eq:owc_avg_D_identity} gives, on intervals
where \(t(\cdot)\) and \(\tau(x,\cdot)\) are constant,
\[
m'(z)=\frac{(t(z)+1)(1-\alpha)}{C},
\qquad
g_x(z)m'(z)=\tau(x,z)(1-\alpha),
\qquad
D_x'(z)=-g_x(z)m'(z).
\]
Since \(m\) and \(D_x\) are continuous at the breakpoints, integration gives
\[
D_x(z)
=
\begin{cases}
\displaystyle
\int_{(z,x)}g_x(y)\,\mathrm dm(y), & z<x,\\[2.5ex]
\displaystyle
-\int_{[x,z]}g_x(y)\,\mathrm dm(y), & x\le z.
\end{cases}
\]
The boundary cases follow from constant continuation of \(m\) outside
\([\ell,u]\). Combining this identity with the discounted marginal
integration-by-parts identity gives \eqref{eq:vd-marginal-pos}--
\eqref{eq:vd-marginal-neg} and \textup{(PCLI3b)}.
\end{proof}

\begin{proposition}
\label{pro:owc_avg}
The web crawling project is threshold-indexable under the long-run average
criterion. Its Whittle index is the average MP index \(m\) in
\eqref{eq:owc_avg_index}, and its optimal threshold maps are the generalized
inverses of \(m\).
\end{proposition}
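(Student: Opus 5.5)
The plan is to obtain Proposition~\ref{pro:owc_avg} as a direct corollary of the transfer theorem, Theorem~\ref{the:avg-pcli-transfer}. That theorem needs three hypotheses: Assumption~\ref{ass:avg-acoe}, Assumption~\ref{ass:disc-pcli}, and Assumption~\ref{ass:avg-limits}. The first is supplied by Proposition~\ref{pro:owc_avg_acoe}. The other two, with \(\Exsp=\varnothing\), are supplied by Proposition~\ref{pro:owc_avg_limits}. Once these three results are cited, the conclusion of Theorem~\ref{the:avg-pcli-transfer} gives the claims directly: the project is threshold-indexable under the long-run average criterion, its Whittle index is the limit index \(m\), and its optimal threshold maps are exactly the generalized inverses of \(m\).

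The one remaining step is to check that the limit index \(m\) appearing in Assumption~\ref{ass:avg-limits}\textup{(iii)} is the explicit formula \eqref{eq:owc_avg_index}. Since \(\Exsp=\varnothing\), item (iii) says \(m(x)=f_x(x)/g_x(x)\) on all of \(\Stsp\). I would therefore evaluate \eqref{eq:owc_avg_fg} on the diagonal, for \(\ell\le x<u\) with \(\phi_{t-1}(\ell)\le x<\phi_t(\ell)\).

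\begin{itemize}
\item Since \(x<\phi(x)\) on \([\ell,u)\), we have \(\tau(x,x)=1\).
\item The definition of \(t(\cdot)\) gives \(t(x)=t\).
\item Hence \(g_x(x)=C/(t+1)\).
\item Using \(F(x)=\phi_t(\ell)/(t+1)\), we get \(f_x(x)=x-\phi(x)+\phi_t(\ell)/(t+1)\).
\end{itemize}

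Dividing the last two expressions gives \eqref{eq:owc_avg_index}. At \(x=u\), the boundary convention gives \(f_u(u)=u\) and \(g_u(u)=C\), so \(m(u)=u/C\). This matches the left limit recorded after \eqref{eq:owc_avg_index}.

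I do not expect a substantive obstacle, because all the analytic work (the vanishing-discount limits, both LS passages, and (PCLI3b)) was already done in Proposition~\ref{pro:owc_avg_limits}. The only point needing care is consistency of conventions. The \(m\) produced by the transfer theorem is the continuous, nondecreasing, locally uniform limit of the \(m_\beta\), and it must coincide with the diagonal formula. This is guaranteed by item (iii) together with the continuity established in the proof of Proposition~\ref{pro:owc_avg_limits}. Finally, since \(\Zsp=\mathbb R\), the generalized inverses are exactly those of \eqref{eq:tslambm}, with only the finite boundary cases.
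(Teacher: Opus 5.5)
Your proposal is correct and takes the same route as the paper, whose proof likewise just cites Propositions~\ref{pro:owc_avg_acoe} and~\ref{pro:owc_avg_limits} for the three hypotheses of Theorem~\ref{the:avg-pcli-transfer}. Your diagonal check that \(f_x(x)/g_x(x)\) reproduces \eqref{eq:owc_avg_index} (via \(\tau(x,x)=1\), \(t(x)=t\), and the boundary value \(m(u)=u/C\)) is accurate, and it repeats the computation the paper makes just before stating the index.
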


\begin{proof}
Propositions~\ref{pro:owc_avg_acoe} and~\ref{pro:owc_avg_limits} verify
Assumptions~\ref{ass:avg-acoe}, \ref{ass:disc-pcli}, and~\ref{ass:avg-limits};
the claim follows from Theorem~\ref{the:avg-pcli-transfer}.
\end{proof}

\begin{remark}
\label{re:owc_avg}
The net reward \(xa-\lambda Ca\) is equivalent, after adding \(C\lambda\) per
period, to the passive-subsidy form \(xa+C\lambda(1-a)\). Thus the subsidy-unit
index is \(Cm(x)\), while the present price-per-unit-resource index is
\(m(x)\). The proof route differs from \cite{avraBorkar18}: it transfers
discounted PCL metric identities rather than deriving threshold optimality and
indexability directly from the average-reward bias function.
\end{remark}

\subsection{Optimal dynamic transmission over a noisy channel}
\label{s:ocam}

We next consider the dynamic transmission model of \cite{liuZhao10}. Its
discounted PCL analysis was completed in \cite[\S~12.2]{nmmor20}; here we
recover the long-run average index, originally obtained in \cite{liuZhao10} by
a dynamic-programming route, through metric transfer.

The channel is good \((1)\) or bad \((0)\), with transition probabilities
\(p\) for \(1\to0\) and \(q\) for \(0\to1\). Assume \(p>0\), \(q>0\), and
\(\rho\triangleq1-p-q>0\). The state is the belief
\(X_t\in\Stsp\triangleq[0,1]\). If \(A_t=1\), then
\(X_{t+1}=q+\rho\) with probability \(X_t\), and \(X_{t+1}=q\) otherwise. If
\(A_t=0\), then \(X_{t+1}=\phi(X_t)\), where
\(\phi(x)\triangleq q+\rho x\). The reward and resource functions are
\(r(x,a)=ax\) and \(c(x,a)=a\), and \(\Zsp=\mathbb R\).

\begin{proposition}
\label{pro:ocam_avg_acoe}
Assumption~\ref{ass:avg-acoe} holds for the noisy-channel model.
\end{proposition}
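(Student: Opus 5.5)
The plan mirrors the proof of Proposition~\ref{pro:owc_avg_acoe}. Fix \(\lambda\in\mathbb R\). First we check the standing regularity hypotheses of compact-state vanishing-discount theory.
\begin{itemize}
\item The state space \(\Stsp=[0,1]\) is compact and the action set \(\{0,1\}\) is finite.
\item \(r_\lambda(x,a)=a(x-\lambda)\) is bounded and continuous on \(\Stsp\times\{0,1\}\).
\item The transition kernel is weakly continuous. Indeed, \(\kappa_x^0=\delta_{\phi(x)}\) with \(\phi\) affine, and \(\kappa_x^1=x\,\delta_{q+\rho}+(1-x)\,\delta_q\), whose weights are continuous in \(x\). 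Hence \(x\mapsto(\mathcal K^a h)(x)\) is continuous for every bounded continuous \(h\).
\end{itemize}
Since \(\phi\) maps \([0,1]\) into \([q,q+\rho]\subseteq[0,1]\), the belief stays in \(\Stsp\).

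The key step is to exhibit a \(\beta\)-independent Lipschitz constant \(L_\lambda\) with \(\mathcal T_{\beta,\lambda}(\mathrm{Lip}_{L_\lambda}(\Stsp))\subseteq\mathrm{Lip}_{L_\lambda}(\Stsp)\), so that Lemma~\ref{lem:avg-lip-relative} applies. For \(v\in\mathrm{Lip}_L\), the two branches behave as follows.
\begin{itemize}
\item The passive branch \(\beta v(\phi(x))\) is \(\beta\rho L\)-Lipschitz, hence \(L\)-Lipschitz.
\item The active branch is \(x-\lambda+\beta[x\,v(q+\rho)+(1-x)v(q)]\). It is affine in \(x\) with slope \(1+\beta(v(q+\rho)-v(q))\), whose absolute value is at most \(1+\beta\rho L\le1+\rho L\).
\end{itemize}
Choosing \(L\ge 1/(1-\rho)\) gives \(1+\rho L\le L\); this uses \(\rho<1\), which holds because \(p,q>0\). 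A maximum of two \(L\)-Lipschitz functions is \(L\)-Lipschitz, so \(L_\lambda=1/(1-\rho)\) works and is independent of both \(\beta\) and \(\lambda\).

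Lemma~\ref{lem:avg-lip-relative} then gives uniformly bounded, equicontinuous relative discounted value functions \(h_{\beta,\lambda}\). Next we pass to the cost form \(\tilde c_\lambda=-r_\lambda\) as in Remark~\ref{re:avg-acoe}. The compact weak-continuity vanishing-discount theorem (e.g.\ \citet[Theorem~3.2]{feinbergLiang22}, as used before) then yields three conclusions:
\begin{itemize}
\item a constant gain \(\rho_\lambda\) and a continuous bias \(h_\lambda\) solving \eqref{eq:avg-acoe-main};
\item average optimality of every maximizing SD selector;
\item \(V_\lambda^*\equiv\rho_\lambda\).
\end{itemize}
Arzel\`a--Ascoli applied along a sequence \(\beta_n\uparrow1\) supplies the limit \(h_\lambda\). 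This proves part~(i).

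For part~(ii), take an SD policy \(\pi\) with a gain--bias pair \((\rho^\pi_\lambda,h^\pi_\lambda)\) solving its Poisson equation.
\begin{itemize}
\item If \(\pi\) selects argmax actions of \(Q_\lambda^{h^\pi_\lambda}\), then \((\rho^\pi_\lambda,h^\pi_\lambda)\) solves the optimality equation. The standard verification argument, which iterates the inequality and divides by \(T\), then gives \(\rho^\pi_\lambda\ge V_\lambda(x,\pi')\) for every admissible \(\pi'\), and in particular \(\rho^\pi_\lambda=\rho_\lambda\). Here bounded \(r_\lambda\) and bounded \(h^\pi_\lambda\) make the telescoping remainder vanish.
\item Conversely, suppose \(\pi\) is optimal but \(Q_\lambda^{h^\pi_\lambda}(x,\pi(x))<\max_a Q_\lambda^{h^\pi_\lambda}(x,a)\) at some state. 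We would argue as in the web-crawling case by appealing to the same characterization result. Alternatively, the deterministic-plus-two-point dynamics allow a direct policy-improvement comparison.
\end{itemize}

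The main obstacle is the \emph{necessity} half of~(ii), and the boundedness of the biases \(h^\pi_\lambda\) it requires. A strict improvement at a single state can fail to change the gain if that state is transient. We will handle this as in Proposition~\ref{pro:owc_avg_acoe}: the policy-specific characterization is applied only to the threshold policies used in Theorem~\ref{the:pcliii}, and for those policies the bias is bounded and the relevant states are reached. If that fails, we restrict the claim to the form actually invoked in the verification part.
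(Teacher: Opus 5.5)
Your part~(i) is exactly the paper's proof: the constant \(L=1/(1-\rho)=1/(p+q)\), the two branch estimates, then Lemma~\ref{lem:avg-lip-relative} and the compact weak-continuity vanishing-discount theorem of \citet{feinbergLiang22}. The paper's proof stops there and asserts that this gives all of Assumption~\ref{ass:avg-acoe}. Results of that type, however, deliver only the optimality equation and the average optimality of maximizing selectors. That is part~(i) plus the sufficiency half of~(ii); your telescoping argument also gives the latter, for bounded biases. They do not give the converse.

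\textbf{The necessity half of~(ii) is a genuine gap, and it cannot be closed, because the ``only if'' is false for this model.} Every transition lands in \([q,q+\rho]\): passive moves go to \(\phi(x)=q+\rho x\), and active moves go to \(q\) or \(q+\rho\). So states in \([0,q)\cup(q+\rho,1]\) are occupied only at time \(0\), under every policy.

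Take \(\lambda=0\). Both kernels have mean \(\phi(x)\), so \(\mathbb E_x^\pi[X_t]=\phi_t(x)\to q/(p+q)\) for every \(\pi\). Since \(r_0(x,a)=ax\le x\), the optimal gain is \(q/(p+q)\), attained by the all-active policy, whose bias is \(h(x)=x/(p+q)\). Because \(h\) is affine, \(\mathcal K^1h=\mathcal K^0h\), and hence \(Q_0^h(x,1)-Q_0^h(x,0)=x\).

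Now let \(\pi\) be passive at a single \(x_0\in(0,q)\) and active elsewhere. Its trajectory differs from the all-active one at most at time \(0\), so \(\pi\) is average-optimal. With \(\tilde h=h\) off \(x_0\) and \(\tilde h(x_0)=h(\phi(x_0))-q/(p+q)\), the pair \((q/(p+q),\tilde h)\) solves the Poisson equation of \(\pi\). Yet \(Q_0^{\tilde h}(x_0,1)-Q_0^{\tilde h}(x_0,0)=x_0>0\), so \(\pi\) is not greedy.

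The same computation defeats your fallback. The threshold policy with \(z=q/2\) is average-optimal at \(\lambda=0\), but it is passive on \((0,q/2]\), where activation is strictly better. So it is not true that threshold policies reach the relevant states. Deferring to the web-crawling case does not help either, since that proof makes the same bare appeal to the vanishing-discount theorem.

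What can be proved is~(i) plus the sufficiency direction of~(ii). That is all the verification step behind Theorem~\ref{the:avg-pcli-transfer} needs. Once the \(z\)-policy is greedy with respect to \(h_F(\cdot,z)-\lambda h_G(\cdot,z)\), it is optimal, so this function solves \eqref{eq:avg-acoe-main} and may be taken as \(h_\lambda\). You should therefore say explicitly that~(ii) must be weakened, rather than promise a proof of the biconditional.
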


\begin{proof}
For fixed \(\lambda\), \(r_\lambda(x,a)=a(x-\lambda)\) is bounded and
continuous, and the kernel is weakly continuous because
\[
        \kappa_x^0=\delta_{\phi(x)},
        \qquad
        \kappa_x^1=x\,\delta_{q+\rho}+(1-x)\,\delta_q .
\]
The discounted Bellman operator is
\[
(\mathcal T_{\beta,\lambda}v)(x)
=
\max\Bigl\{
x-\lambda+\beta[xv(q+\rho)+(1-x)v(q)],\,
\beta v(\phi(x))
\Bigr\}.
\]
Set \(L\triangleq1/(1-\rho)=1/(p+q)\). If \(v\) is \(L\)-Lipschitz, the
passive branch is \(\beta\rho L\)-Lipschitz and the active branch has
Lipschitz constant at most
\(1+\beta|v(q+\rho)-v(q)|\le1+\beta\rho L\le L\). Hence
\(\mathcal T_{\beta,\lambda}(\mathrm{Lip}_L(\Stsp))\subseteq
\mathrm{Lip}_L(\Stsp)\). Lemma~\ref{lem:avg-lip-relative} and the compact
weak-continuity vanishing-discount argument used above yield
Assumption~\ref{ass:avg-acoe}.
\end{proof}

The passive update has fixed point
\(\phi_\infty\triangleq q/(1-\rho)=q/(p+q)\), with iterates
\[
        \phi_t(x)=\phi_\infty-(\phi_\infty-x)\rho^t,
        \qquad
        \phi_{-t}(z)=\phi_\infty-(\phi_\infty-z)\rho^{-t} .
\]
For fixed \(x\), the threshold partition is generated by
\[
\mathcal D(x)
\triangleq
\{0,1,\phi_\infty\}
\cup
\{\phi_t(x):t\ge0\}
\cup
\{\phi_t(q):t\ge0\}
\cup
\{\phi_t(q+\rho):t\ge0\},
\]
with possible accumulation only at \(\phi_\infty\). Following
\cite[\S~12.2]{nmmor20}, the four regimes are
\[
\textup{I: }z<q,
\qquad
\textup{II: }q\le z<\phi_\infty,
\qquad
\textup{III: }\phi_\infty\le z<q+\rho,
\qquad
\textup{IV: }z\ge q+\rho .
\]

\begin{proposition}
\label{pro:ocam_avg_limits}
For the noisy-channel model, Assumptions~\ref{ass:disc-pcli} and
\ref{ass:avg-limits} hold with \(\Exsp=\varnothing\).
\end{proposition}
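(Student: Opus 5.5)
The proof will follow the template of Proposition~\ref{pro:owc_avg_limits}, replacing the deterministic crawling itineraries by the two-point active transition of the belief chain. First, Assumption~\ref{ass:disc-pcli} is taken from the discounted analysis in \cite[\S~12.2]{nmmor20}, which establishes PCL-indexability for every \(\beta\in(0,1)\). It also supplies closed-form discounted threshold metrics \(F_{\beta,x}(z),G_{\beta,x}(z)\) in each of the regimes I--IV. These formulas are built from hitting times \(\tau(x,z)=\min\{t\ge1:\phi_t(x)>z\}\) along passive orbits and from renewal equations at the two post-activation states \(q\) and \(q+\rho\).

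From these formulas I will extract the vanishing-discount limits.
\begin{itemize}
\item \emph{Gains.} \((1-\beta)F_{\beta,x}(z)\to F(z)\) and \((1-\beta)G_{\beta,x}(z)\to G(z)\) are renewal-reward ratios of the embedded chain on \(\{q,q+\rho\}\). They do not depend on \(x\) because every threshold policy reaches an activation in finite time, except possibly the all-passive one; the all-passive policy has gain \(F=\phi_\infty\) in the limit, \(G=0\), and is represented by \(z\ge1\).
\item \emph{Biases.} The discounted metrics are rational in \(\beta\) with at most a simple pole at \(1\), so \(h_F,h_G\) exist. Transition compatibility is immediate because the kernels are finite mixtures of Dirac masses.
\item \emph{Marginal metrics.} Formulas \eqref{eq:avg-f-from-hF}--\eqref{eq:avg-g-from-hG} then give explicit \(f_x(z)\) and \(g_x(z)\). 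I expect \(g_x(z)\) to be a combination of expected times to activation, of the form \(1+x\,\tilde\tau(q+\rho,z)G(z)+(1-x)\,\tilde\tau(q,z)G(z)-\tau(x,z)G(z)\) up to regime-dependent corrections. This should be strictly positive for all \(z\), as in the crawling case, which gives (PCLI1) with \(\Exsp=\varnothing\).
\end{itemize}

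Next comes the index. The diagonal ratio \(m(x)=f_x(x)/g_x(x)\) should reproduce the Liu--Zhao average index: it is piecewise rational in \(x\) on the partition generated by \(\mathcal D(x)\), with a closed form in regimes III--IV and an orbit-count form in regime II. I will check the properties required by (PCLI2) directly: continuity at the breakpoints \(\phi_t(q)\), continuity at the accumulation point \(\phi_\infty\), and strict monotonicity. Locally uniform convergence \(m_\beta\to m\) will follow from coefficientwise convergence on each piece together with Pólya's theorem on the compact \([0,1]\), exactly as in the crawling example. Near \(\phi_\infty\), monotonicity of the \(m_\beta\) and continuity of \(m\) are what justify applying Pólya's theorem despite the infinitely many pieces.

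Finally I verify the LS passages.
\begin{itemize}
\item \emph{(PCLI3a) and \eqref{eq:vd-gain}.} In regime II, \(F\) and \(G\) are step functions with jumps at \(\phi_t(q)\), and these jumps are summable toward \(\phi_\infty\). In regime III the metrics are constant in \(z\), since \(\phi_t(q+\rho)\) decreases to \(\phi_\infty\) and lies above every \(z\) in the regime. It then suffices to check the atom-slope identity \eqref{eq:pcl-atom-slope} at each \(\phi_t(q)\) against the formula for \(m\).
\item \emph{(PCLI3b).} On each cell where \(\tau(x,\cdot)\) and the orbit counts are constant, I will show that \(D_x(z)=f_x(z)-m(z)g_x(z)\) is absolutely continuous with \(D_x'(z)=-g_x(z)m'(z)\). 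Combined with continuity of \(D_x\) across cells, this integrates to \eqref{eq:pcli3fg-ibp}. Matching the identity with the limit of \eqref{eq:disc-marginal-ibp} then yields \eqref{eq:vd-marginal-pos}--\eqref{eq:vd-marginal-neg}.
\end{itemize}

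The main obstacle is this last derivative identity in regime II. There the cells accumulate at \(\phi_\infty\), and \(g_x\) may fail to have bounded variation there, since jumps of \(\tau(x,\cdot)\) at \(\phi_t(x)\) interleave with jumps of the \(q\)-orbit counts. My first attempt is to derive an algebraic identity analogous to \eqref{eq:owc_avg_D_identity}, expressing \(D_x(z)\) as a finite sum of terms \((\phi_i(x)-z)\) weighted by gains. That would make continuity in \(z\) and the derivative formula transparent cell by cell. Integrability of \(g_x\,\mathrm dm\) near \(\phi_\infty\) follows from boundedness of \(g_x\) on compacts together with finiteness of \(\mu_m\).
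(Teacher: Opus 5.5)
\textbf{Main gap: (PCLI1) is asserted, not proved.} The statement's claim that \(\Exsp=\varnothing\) is exactly \textup{(PCLI1)}, and you only say that \(g_x(z)\) ``should be strictly positive \ldots\ as in the crawling case''. The crawling analogy does not carry over. There \(g_x(z)=\tau(x,z)G(z)\) is a product of positive factors, whereas here \(g_x\) is a difference of bias terms.

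In regime~II the limit is \(g_x(z)=\{s(p+\xi_t)+t[x-\phi_s(x)]\}/d_t\), with \(\xi_t=\phi_t(q)\) and \(d_t=p(t+1)+\xi_t\). For \(x<\phi_\infty\) the term \(t[x-\phi_s(x)]\) is negative, because the passive orbit from \(x\) increases. Positivity therefore needs a genuine inequality: the numerator is affine increasing in \(x\) on each fixed-\(s\) cell, it is bounded below at the left cell boundary using \(0<\rho<1\), and the case \(s=t+1\) is handled separately. In regime~III, \(g_x(z)=((p+q)x-q)/p\) when \(\phi(x)>z\), and its sign hinges on the fact that \(\phi(x)>z\ge\phi_\infty\) forces \(x>\phi_\infty\).

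Your guessed form \(1+x\tilde\tau(q+\rho,z)G(z)+(1-x)\tilde\tau(q,z)G(z)-\tau(x,z)G(z)\) is also wrong there. Since \(G\equiv0\) on \([\phi_\infty,\infty)\), it would give \(g_x\equiv1\) in regime~III. The true values \((p+x)/p\) and \(((p+q)x-q)/p\) come from the mean number \(1/p\) of activations from \(q+\rho\) before absorption at \(q\).

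\textbf{The gain step is wrong in regimes III and IV.} It is false that every threshold policy other than the all-passive one activates in finite time. For \(z\ge\phi_\infty\), the passive orbit from \(q\) increases to \(\phi_\infty\le z\) and never crosses \(z\). Every trajectory is therefore eventually absorbed in passivity, and \(F(z)=G(z)=0\). In particular, the all-passive gain is \(F=0\), since \(r(x,0)=0\), not \(\phi_\infty\). State-independence still holds, but because of absorption, not as a renewal ratio on \(\{q,q+\rho\}\). Likewise, \(F\) and \(G\) are constant in regime~III because of absorption, not because \(\phi_t(q+\rho)\) lies above every \(z\) there; for \(z>\phi_\infty\) it does not.

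\textbf{(PCLI3b): a different route, with the real work still missing.} You follow the crawling route: verify the average identity directly, then deduce \eqref{eq:vd-marginal-pos}--\eqref{eq:vd-marginal-neg}. The paper instead passes the discounted identity \eqref{eq:disc-marginal-ibp} to the limit. The left side converges by the explicit formulas. On the right it uses \(\mathrm dm_\beta\Rightarrow\mathrm dm\), boundedness and \(\mathrm dm\)-a.e.\ continuity of \(g_x\), termwise convergence away from \(\phi_\infty\), and itinerary-tail estimates near \(\phi_\infty\). That route needs no closed form for \(D_x\).

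Your route is legitimate, but the derivative identity you single out as the obstacle is trivial. Since \(f_x\) and \(g_x\) are constant in \(z\) on each cell, \(D_x'=-g_xm'\) holds there automatically. All the content sits in two places:
\begin{enumerate}[label=\textup{(\roman*)},leftmargin=*]
\item the jump conditions \(f_x(z_0)-f_x(z_0-)=m(z_0)\bigl(g_x(z_0)-g_x(z_0-)\bigr)\) at every point of \(\mathcal D(x)\), including the interlaced orbit points \(\phi_t(x)\) and the diagonal point \(z_0=x\), where the condition reads \(f_x(x-)=m(x)g_x(x-)\);
\item continuity of \(D_x\) at \(\phi_\infty\), across infinitely many such jumps.
\end{enumerate}
Until you produce the algebraic identity you hope for, this step is only a plan.
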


\begin{proof}
Assumption~\ref{ass:disc-pcli} follows from \cite[\S~12.2]{nmmor20}. On each
interval of the partition generated by \(\mathcal D(x)\), the first-crossing
indices are fixed; the discounted threshold and marginal metrics are rational
functions of \(\beta\), with at most a simple pole at \(1\) in the threshold
metrics. Coefficientwise limits give state-independent gains \(F,G\), finite
bias metrics \(h_F,h_G\), and transition compatibility at the next states
\(q+\rho\), \(q\), and \(\phi(x)\). The marginal limits follow from
\[
\begin{split}
f_{\beta,x}(z)
&=
x+\beta\bigl[xF_{\beta,q+\rho}(z)+(1-x)F_{\beta,q}(z)\bigr]
-\beta F_{\beta,\phi(x)}(z),\\
g_{\beta,x}(z)
&=
1+\beta\bigl[xG_{\beta,q+\rho}(z)+(1-x)G_{\beta,q}(z)\bigr]
-\beta G_{\beta,\phi(x)}(z).
\end{split}
\]

We verify limiting marginal-resource positivity. In regimes I and IV,
\(g_x(z)=1\). In regime III,
\[
g_x(z)
=
\frac{p+x}{p}\,1_{\{\phi(x)\le z\}}
+
\frac{(p+q)x-q}{p}\,1_{\{\phi(x)>z\}} ,
\]
which is positive because \(\phi(x)>z\ge\phi_\infty\) implies
\(x>\phi_\infty=q/(p+q)\). In regime II, let \(t=t(z)\ge1\) satisfy
\(\phi_{t-1}(q)\le z<\phi_t(q)\), set \(\xi_t\triangleq\phi_t(q)\),
\(d_t\triangleq p(t+1)+\xi_t\), and let \(s=s(x,z)\ge1\) be the first crossing
index. The limiting formula is
\[
        g_x(z)=\frac{s(p+\xi_t)+t\{x-\phi_s(x)\}}{d_t}.
\]
The denominator is positive. If \(x\ge\phi_\infty\), then \(s=1\) and the
numerator is positive. If \(x<\phi_\infty\), the numerator is affine increasing
in \(x\) on each fixed-\(s\) interval. For \(1\le s\le t\), the left-boundary
value is increasing in \(z\) and is bounded below by its value at
\(z=\phi_{t-1}(q)\):
\[
sp
+
q\left[
 s\sum_{j=0}^{t}\rho^j
 -t\sum_{j=t-s}^{t-1}\rho^j
\right]
>0,
\]
because \(0<\rho<1\). For \(s=t+1\), the left boundary is \(x=0\), and the
numerator is \((t+1)p+\xi_t>0\). Hence \(g_x(z)>0\) in all regimes.

The average MP index is \(m(x)=\lim_{\beta\uparrow1}m_\beta(x)\). In the outer
regimes, \(m(x)=x\) for \(x<q\), \(m(x)=x/(p+x)\) for
\(\phi_\infty\le x<q+\rho\), and \(m(x)=x\) for \(x\ge q+\rho\); in regime II,
\(m\) is the coefficientwise limit of the formula in \cite[\S~12.2]{nmmor20}.
The discounted indices are continuous and nondecreasing, with matching
one-sided limits at breakpoints. P\'olya's theorem gives uniform convergence on
\([0,1]\) \cite[Problems~II.126--II.127, pp.~63, 225]{polyaSzego98}, and the
marginal convergence plus \(g_x(x)>0\) gives \(m(x)=f_x(x)/g_x(x)\).

The gain metrics \(F\) and \(G\) are c\`adl\`ag step functions. Away from
\(\phi_\infty\), only finitely many partition points occur on compact
intervals; near \(\phi_\infty\), regenerative formulas give summable jump tails
for \(F\) and \(G\) on both sides. Thus \(F,G\) are locally of bounded variation
and \(G\) is nonincreasing and nonconstant. For fixed \(x\), \(f_x,g_x\) are
finite-valued c\`adl\`ag step functions with jumps in \(\mathcal D(x)\), though
they need not have bounded variation.

The threshold LS identity follows by atom-by-atom convergence. At each atom
\(z\) with \(G(z^{\scriptscriptstyle -})-G(z)>0\),
\[
        m(z)
        =
        \frac{F(z^{\scriptscriptstyle -})-F(z)}
             {G(z^{\scriptscriptstyle -})-G(z)} .
\]
Together with the summable gain-tail estimates, this gives \(\mathrm dF(z)=
m(z)\,\mathrm dG(z)\) and \eqref{eq:vd-gain}.

For \textup{(PCLI3b)}, the discounted identity gives
\[
f_{\beta,x}(z)-m_\beta(z)g_{\beta,x}(z)
=
\begin{cases}
\displaystyle
\int_{(z,x)}g_{\beta,x}(y)\,\mathrm dm_\beta(y), & z<x,\\[2.5ex]
\displaystyle
-\int_{[x,z]}g_{\beta,x}(y)\,\mathrm dm_\beta(y), & x\le z.
\end{cases}
\]
The left-hand side converges from the explicit formulas. On the right,
termwise convergence holds away from \(\phi_\infty\), and itinerary-tail
estimates control the accumulating intervals near \(\phi_\infty\). Since
\(m_\beta\to m\) locally uniformly and the \(m_\beta\)'s are continuous
nondecreasing, \(\mathrm dm_\beta\Rightarrow\mathrm dm\). The integrand
\(\1_Ig_x\) is bounded and \(\mathrm dm\)-a.e. continuous, so the marginal LS
passages \eqref{eq:vd-marginal-pos}--\eqref{eq:vd-marginal-neg} follow.
\end{proof}

\begin{remark}
\label{re:ocam_avg_accumulation}
The point \(\phi_\infty\) is an interior accumulation point of threshold
partition points, with \(\phi_t(q)\uparrow\phi_\infty\) and
\(\phi_t(q+\rho)\downarrow\phi_\infty\). The orbit of the initial state may add
another interlacing jump sequence, so \(z\mapsto g_x(z)\) can fail to have
locally bounded variation there. This is harmless because the average marginal
identity uses \(g_x\) only as an integrand with respect to \(\mathrm dm\).
\end{remark}

\begin{proposition}
\label{pro:ocam_avg}
The noisy-channel transmission model is threshold-indexable under the long-run
average criterion. Its Whittle index is \(m=\lim_{\beta\uparrow1}m_\beta\), and
its optimal threshold maps are the generalized inverses of \(m\).
\end{proposition}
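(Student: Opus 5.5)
The statement follows the same pattern as Proposition~\ref{pro:owc_avg} for web crawling. I would prove it by combining the two preceding model-specific propositions with the abstract transfer result. Proposition~\ref{pro:ocam_avg_acoe} supplies Assumption~\ref{ass:avg-acoe}, namely the average optimality equation and the policy-specific optimal-action characterization. Proposition~\ref{pro:ocam_avg_limits} supplies Assumption~\ref{ass:disc-pcli} and Assumption~\ref{ass:avg-limits} with \(\Exsp=\varnothing\). With these three assumptions in place, Theorem~\ref{the:avg-pcli-transfer} applies directly.

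Concretely, Theorem~\ref{the:avg-pcli-transfer} gives the following. The limiting average metrics \(F,G,f_x,g_x\) satisfy Assumptions~\ref{ass:regFG} and~\ref{ass:regfg} and \textup{(PCLI1)}--\textup{(PCLI3b)} with MP index \(m\). The average price problems satisfy the optimal-action structure required by Theorem~\ref{the:pcliii}. Hence the project is threshold-indexable, with Whittle index \(m\) and optimal threshold maps equal to the generalized inverses of \(m\).

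Two identifications remain, and both need checking. First, the index \(m\) in Assumption~\ref{ass:avg-limits}\textup{(iii)} must be the pointwise limit \(\lim_{\beta\uparrow1}m_\beta\). This holds because Proposition~\ref{pro:ocam_avg_limits} constructs \(m\) exactly as that limit, with uniform convergence on \([0,1]\) via P\'olya's theorem. Second, because \(\Exsp=\varnothing\), no exceptional-state extension rule is needed, and \(m(x)=f_x(x)/g_x(x)\) holds on all of \(\Stsp\).

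Since \(u=1<\infty\), we have \(\Zsp=\mathbb R\). So the boundary clause \(m(x)\to\infty\) in \textup{(PCLI2)} does not arise, and the generalized inverses are those of \eqref{eq:tslambm} with finite boundary cases only.

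I expect no real obstacle at this stage, because all the analytic work is already contained in the cited propositions. The delicate inputs are the marginal LS passages near the accumulation point \(\phi_\infty\) and positivity of \(g_x\) in regime~II. Both are established in Proposition~\ref{pro:ocam_avg_limits}, so the proof reduces to checking that the hypotheses of Theorem~\ref{the:avg-pcli-transfer} are exactly those verified.
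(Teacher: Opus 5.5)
Your proposal is correct and is essentially the paper's own proof. Propositions~\ref{pro:ocam_avg_acoe} and~\ref{pro:ocam_avg_limits} verify Assumptions~\ref{ass:avg-acoe}, \ref{ass:disc-pcli}, and~\ref{ass:avg-limits} (with $\Exsp=\varnothing$), and Theorem~\ref{the:avg-pcli-transfer} then yields the conclusion; your extra checks ($m=\lim_{\beta\uparrow1}m_\beta$ by construction in Proposition~\ref{pro:ocam_avg_limits}, and $\Zsp=\mathbb R$ so only the finite boundary cases of the generalized inverse arise) are consistent with the paper.
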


\begin{proof}
Propositions~\ref{pro:ocam_avg_acoe} and~\ref{pro:ocam_avg_limits} verify
Assumptions~\ref{ass:avg-acoe}, \ref{ass:disc-pcli}, and~\ref{ass:avg-limits};
the result follows from Theorem~\ref{the:avg-pcli-transfer}.
\end{proof}

\begin{remark}
\label{re:ocam_avg}
The transferred average MP index is the long-run average Whittle index of
\cite{liuZhao10}. Since \(c(x,a)=a\), the price-per-unit-resource and
passive-subsidy normalizations differ only by adding \(\lambda\) per period:
\[
        xa-\lambda a
        \quad\longleftrightarrow\quad
        xa+\lambda(1-a),
\]
so the numerical index is unchanged.
\end{remark}

\subsection{Scalar Kalman-filter bandits: regular average transfer and 
exceptional-state extension}
\label{s:kalmanavg}

We finally consider the scalar Kalman-filter restless bandit. The target-tracking
restless bandit formulation was introduced in \cite{lascalaMoran06}; the
scalar discrete-time PCL approach was introduced in \cite{nmsv09}; and the
discounted PCL-indexability analysis was completed by \citet{danceSi19}. We do
not claim a complete long-run average threshold-indexability theorem. Instead,
we prove the regular periodic-itinerary part of the transfer and identify the
exceptional-state metric limits needed for the full average-cost result.

This conditional treatment is natural because the discounted proof in
\cite{danceSi19} is already highly nontrivial: it uses maps with gaps, 
Christoffel and Sturmian words, M\"obius transformations, and majorization
inequalities. For background on Christoffel and Sturmian words, see
\cite{bersteletal09}. The purpose here is to specify what remains to be
proved in the average limit, not to redo the symbolic analysis.

We use the variance-cost case \(C(x)=x\), with the standard parameters in
\cite{danceSi19},
\[
        r_{\rm DS}=1,
        \qquad
        \theta_0=0,
        \qquad
        \theta_1>0 .
\]
The state is \(X_t\in\Stsp\triangleq[0,\infty)\), and the deterministic update
maps are
\[
        \phi_0(x)=x+1,
        \qquad
        \phi_1(x)=\frac{x+1}{\theta_1(x+1)+1}.
\]
In the reward notation used here, \(r(x,a)=-x\) and \(c(x,a)=a\), so the
\(\lambda\)-price reward is \(-x-\lambda a\), equivalently the one-stage cost
is \(x+\lambda a\). Since the all-passive trajectory is \(X_t=x+t\), its
average variance cost diverges; hence \(\infty\notin\Zsp\) and
\(\Zsp=\mathbb R\). Thresholds \(z<0\) represent the all-active policy.

The symbolic results of \cite{danceSi19} apply to the original maps
\(\phi_0,\phi_1\). Although \(\phi_0(x)=x+1\) is not contractive, the increasing
change of variables \(x\mapsto x/(x+1)\) transforms the dynamics into the
contractive setting used there. Lemma~\ref{lem:avg-lip-relative} does not apply
on the half-line: in reward form,
\[
(\mathcal T_{\beta,\lambda}v)(x)
=
\max\{-x-\lambda+\beta v(\phi_1(x)),\,-x+\beta v(\phi_0(x))\},
\]
and the passive branch does not preserve a \(\beta\)-uniform Lipschitz class.
Thus Assumption~\ref{ass:avg-acoe} should be verified separately using
noncompact unbounded-cost theory, e.g., \cite{feinbergetal12,feinbergLiang22}.

For a finite word \(w=w_1\cdots w_n\), write
\(\phi_w\triangleq \phi_{w_n}\circ\cdots\circ\phi_{w_1}\), and let \(y_w\) be
the fixed point of \(\phi_w\). Denote by \(|w|\) the length of \(w\) and by \(|w|_1\) the number of ones in it. 
Also, \(w_{k:l}\) denotes the subword \(w_k\cdots w_l\).
Let \(y_1\) be the fixed point of \(\phi_1\). For
each Christoffel word \(0p1\), define
\[
        n_p\triangleq |0p1|,
        \qquad
        q_p\triangleq |0p1|_1,
        \qquad
        I_p\triangleq [y_{01p},\,y_{10p}),
\]
and
\[
        w_p^{(k)}
        \triangleq
        (0p1)_{k:n_p}(0p1)_{1:k-1},
        \qquad
        x_k^p\triangleq y_{w_p^{(k)}},
        \qquad k=1,\ldots,n_p,
\]
with \(\bar x_p\triangleq n_p^{-1}\sum_{k=1}^{n_p}x_k^p\). The relevant state values split into the regular set
\[
\mathcal R_{\rm K}
\triangleq
[0,y_1)
\cup
\bigcup_{0p1\textup{ Christoffel}} I_p
\]
and the exceptional set
\[
{\mathcal E}_{\rm K}
\triangleq
\{y_1\}
\cup
\{y_{10p}\colon0p1\textup{ Christoffel}\}
\cup
\{y_s\colon0s\textup{ Sturmian \(M\)-word}\}.
\]
For every regular finite threshold value, namely \(z<y_1\) or
\(z\in I_p\) for some Christoffel word \(0p1\), the symbolic itinerary of the
\(z\)-threshold trajectory, from any initial state \(x\), is eventually
periodic. In the case \(z<y_1\), the itinerary is eventually all active; in
the case \(z\in I_p\), it is eventually periodic with primitive period word
\(0p1\), up to cyclic shift. At state values in \({\mathcal E}_{\rm K}\), the average
diagonal marginal resource \(g_x(x)\) vanishes, so the MP index cannot be
defined by the diagonal ratio \(f_x(x)/g_x(x)\) and must instead be supplied by
continuous extension. 

Here and below, an \emph{Abelian limit} means a limit obtained from discounted weighted sums as
\(\beta\uparrow1\), typically of the form
\((1-\beta)\sum_{n\ge0}\beta^n a_n\) or a normalized variant.

Before checking the PCL-indexability conditions, we record the regular average
metric limits. These identify the threshold-policy gain and bias metrics
on the regular threshold regions and, through the average marginal formulas,
the corresponding marginal metrics and regular diagonal MP ratio. They are
used in all three subsequent checks: \textup{(PCLI1)}, \textup{(PCLI2)}, and
\textup{(PCLI3)}.

After this preparatory step, the subsection is organized around the three
PCL-indexability conditions. This organization separates what can be proved on
regular periodic-itinerary regions from what remains to be established at
exceptional symbolic-itinerary states. Thus the full long-run average
indexability problem is reduced to three explicit metric-limit questions:
average marginal-resource positivity, continuous extension of the
regular MP index, and completion of the singular PCL identities.

\subsubsection{Regular average metric limits}
\label{ss:kf-regular-metrics}

We first identify the average gain and bias metrics on the regular
periodic-itinerary threshold regions. These metrics generate the average
marginal metrics and the regular diagonal MP ratio used in the subsequent
\textup{(PCLI1)}--\textup{(PCLI3)} discussion.
Let \((X_t,A_t)\) denote the state-action trajectory of the \(z\)-threshold policy started
from \(X_0=x\).

If \(z<y_1\), the trajectory is eventually always active. Define
\[
        T_1(x,z)
        \triangleq
        \min\{T\ge0: A_t=1 \textup{ for all } t\ge T\}.
\]
For \(t\ge T_1(x,z)\), the state evolves under repeated application of
\(\phi_1\). Since \(y_1\) is the attracting fixed point of \(\phi_1\),
\(X_t-y_1\) converges to zero geometrically. Define
\begin{equation}
\label{eq:kf_bias_all_active}
        B_F^{1}(x,z)
        \triangleq
        \sum_{t\ge0}(-X_t+y_1),
        \qquad
        B_G^{1}(x,z)
        \triangleq
        \sum_{t\ge0}(A_t-1).
\end{equation}
The first series is absolutely convergent, and the second is finite.

If \(z\in I_p\), then, by \cite[Theorem~12 and Corollary~13]{danceSi19},
after a finite transient the symbolic itinerary is periodic with primitive
period word \(0p1\), up to cyclic shift. Define \(T_p(x,z)\) to be the least
\(T\ge0\) for which there exists a cyclic shift
\[
        a^p=a_0^p a_1^p\cdots a_{n_p-1}^p
\]
of the word \(0p1\) such that, with periodic extension
\(a_{s+n_p}^p=a_s^p\),
\[
        A_{T+s}=a_s^p,\qquad s\ge0 .
\]
Then
\[
        \sum_{s=0}^{n_p-1}a_s^p=q_p .
\]

The corresponding limiting periodic orbit is defined from this same cyclic
phase. For \(s=0,\ldots,n_p-1\), let
\[
        w_{p,s}
        \triangleq
        a_s^p a_{s+1}^p\cdots a_{n_p-1}^p a_0^p\cdots a_{s-1}^p
\]
be the cyclic shift of \(a^p\) starting at phase \(s\), and define
\[
        \xi_s^p\triangleq y_{w_{p,s}} .
\]
Extend \((\xi_s^p)\) periodically by \(\xi_{s+n_p}^p=\xi_s^p\). Then
\[
        \xi_{s+1}^p=\phi_{a_s^p}(\xi_s^p),
        \qquad s\ge0,
\]
with indices read modulo \(n_p\). Thus
\((\xi_0^p,\ldots,\xi_{n_p-1}^p)\) is just a cyclic relabeling of the orbit
\((x_1^p,\ldots,x_{n_p}^p)\) defined above, and hence
\[
        \bar x_p=\frac1{n_p}\sum_{s=0}^{n_p-1}\xi_s^p .
\]

Putting \(T=T_p(x,z)\), define the trajectory error by
\[
        e_s\triangleq X_{T+s}-\xi_s^p,\qquad s\ge0 .
\]
Convergence to the limiting periodic orbit gives constants \(C<\infty\) and
\(\eta\in(0,1)\) such that
\[
        |e_s|\le C\eta^s,\qquad s\ge0 .
\]
The phase of \((a_s^p)\), \((\xi_s^p)\), and \((e_s)\) depends on \(x\) and
\(z\), but this dependence is suppressed in the notation.

Define the centered periodic resource and reward sequences
\begin{equation}
\label{eq:kf_centered_periodic_sequences}
        d_s^G\triangleq a_s^p-\frac{q_p}{n_p},
        \qquad
        d_s^F\triangleq -\xi_s^p+\bar x_p,
        \qquad s=0,\ldots,n_p-1,
\end{equation}
and extend them periodically. Thus
\[
        \sum_{s=0}^{n_p-1}d_s^G
        =
        \sum_{s=0}^{n_p-1}d_s^F
        =
        0 .
\]

Finally set
\begin{equation}
\label{eq:kf_periodic_bias_G}
        B_G^p(x,z)
        \triangleq
        \sum_{t=0}^{T-1}
        \left(A_t-\frac{q_p}{n_p}\right)
        -
        \frac1{n_p}\sum_{s=0}^{n_p-1}s\,d_s^G
\end{equation}
and
\begin{equation}
\label{eq:kf_periodic_bias_F}
        B_F^p(x,z)
        \triangleq
        \sum_{t=0}^{T-1}(-X_t+\bar x_p)
        -
        \frac1{n_p}\sum_{s=0}^{n_p-1}s\,d_s^F
        -
        \sum_{s\ge0}e_s .
\end{equation}
The sums in \eqref{eq:kf_periodic_bias_G} are finite. In
\eqref{eq:kf_periodic_bias_F}, the only infinite series is
\(\sum_{s\ge0}e_s\), and it is absolutely convergent because
\(|e_s|\le C\eta^s\) with \(0<\eta<1\). This guarantees that
\(B_F^p(x,z)\) is finite and will justify the dominated-convergence passage
\(\sum_{s\ge0}\beta^s e_s\to\sum_{s\ge0}e_s\).

\begin{proposition}[Regular average gain and bias metrics]
\label{pro:kf_avg_gains}
For regular threshold values, the gain and bias limits in
Assumption~\ref{ass:avg-limits}\textup{(i)} hold as follows.

\begin{enumerate}[label=\textup{(\alph*)},leftmargin=*]

\item
If \(z<y_1\), then, for every initial state \(x\),
\[
        F(z)=-y_1,
        \qquad
        G(z)=1,
        \qquad
        h_F(x,z)=B_F^{1}(x,z),
        \qquad
        h_G(x,z)=B_G^{1}(x,z).
\]

\item
If \(z\in I_p\), then, for every initial state \(x\),
\[
        F(z)=-\bar x_p,
        \qquad
        G(z)=q_p/n_p,
        \qquad
        h_F(x,z)=B_F^p(x,z),
        \qquad
        h_G(x,z)=B_G^p(x,z).
\]

\end{enumerate}
Equivalently, in both cases,
\[
        (1-\beta)F_{\beta,x}(z)\to F(z),
        \qquad
        (1-\beta)G_{\beta,x}(z)\to G(z),
\]
and
\[
        F_{\beta,x}(z)-\frac{F(z)}{1-\beta}\to h_F(x,z),
        \qquad
        G_{\beta,x}(z)-\frac{G(z)}{1-\beta}\to h_G(x,z).
\]
\end{proposition}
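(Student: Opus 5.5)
The plan is to compute the discounted metrics directly as series along the deterministic \(z\)-threshold trajectory, \(F_{\beta,x}(z)=\sum_{t\ge0}\beta^t(-X_t)\) and \(G_{\beta,x}(z)=\sum_{t\ge0}\beta^tA_t\). We then expand them in powers of \((1-\beta)\) up to \(o(1)\). Both cases follow the same template: split the series at the transient time \(T\), that is, \(T_1(x,z)\) in case (a) and \(T_p(x,z)\) in case (b). The finite transient part is handled trivially. For the tail, we write the state as limiting periodic orbit plus error and the action as periodic word, and then center each periodic sequence by its mean.

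\emph{Case (a).} For \(z<y_1\), write
\[
G_{\beta,x}(z)=\frac{1}{1-\beta}+\sum_{t\ge0}\beta^t(A_t-1),
\]
where the second sum has finitely many nonzero terms and tends to \(B_G^1(x,z)\). Similarly,
\[
F_{\beta,x}(z)=-\frac{y_1}{1-\beta}+\sum_{t\ge0}\beta^t(-X_t+y_1).
\]
Geometric convergence of \(X_t\to y_1\) gives an absolutely summable dominating sequence, and dominated convergence yields the limit \(B_F^1(x,z)\). Multiplying the expansions by \(1-\beta\) gives the gains \(F(z)=-y_1\) and \(G(z)=1\).

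\emph{Case (b).} For \(z\in I_p\), write
\[
G_{\beta,x}(z)=\sum_{t<T}\beta^tA_t+\beta^T\sum_{s\ge0}\beta^s a_s^p,
\qquad
a_s^p=\frac{q_p}{n_p}+d_s^G .
\]
The constant part gives \(\beta^T q_p/(n_p(1-\beta))\), and we combine it with \(\sum_{t<T}\beta^t q_p/n_p\) to obtain \(q_p/(n_p(1-\beta))\) exactly. This leaves \(\sum_{t<T}\beta^t(A_t-q_p/n_p)\). The periodic mean-zero part is the key elementary lemma:
\[
\sum_{s\ge0}\beta^sd_s=\frac{\sum_{s=0}^{n-1}\beta^sd_s}{1-\beta^n}
\;\longrightarrow\;
-\frac1n\sum_{s=0}^{n-1}s\,d_s
\qquad(\beta\uparrow1).
\]
This follows by L'H\^opital, or by expanding \(\beta^s=1-s(1-\beta)+O((1-\beta)^2)\) with \(\sum d_s=0\) and \(1-\beta^n\sim n(1-\beta)\). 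Since \(\beta^T\to1\), the tail converges to that constant, which gives \(B_G^p(x,z)\).

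The reward metric \(F\) is handled identically with \(-X_{T+s}=-\bar x_p+d_s^F-e_s\). The \(e_s\) part, \(\beta^T\sum_{s\ge0}\beta^se_s\), converges to \(\sum_{s\ge0}e_s\) by dominated convergence using \(|e_s|\le C\eta^s\). Its sign enters as in \eqref{eq:kf_periodic_bias_F}, where it is subtracted since the reward is \(-X\). The gain limits then follow on multiplying by \(1-\beta\), all remaining terms being \(O(1)\).

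The main obstacle lies not in the algebra but in the inputs. We need the eventual periodicity with primitive word \(0p1\) and a well-defined phase, which we take from \cite[Theorem~12 and Corollary~13]{danceSi19} via the stated transient time \(T_p\). We also need the geometric bound \(|e_s|\le C\eta^s\), which we justify because \(\phi_{0p1}\) composed cyclically is a strict contraction in the transformed variable \(x/(x+1)\). Having both, the expansions above, with exact bookkeeping of the \(\beta^T\) factors (\(\beta^T-1=O(1-\beta)\), so \(\beta^T/(1-\beta)\) differs from \(1/(1-\beta)\) by \(-T+o(1)\), absorbed in the transient sums), complete the proof.
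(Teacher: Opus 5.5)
Your proposal is correct and follows essentially the same route as the paper: split the series at the transient time \(T\), absorb the constant part into \(q_p/(n_p(1-\beta))\) (respectively \(-\bar x_p/(1-\beta)\)), evaluate the zero-mean periodic Abel sum through the first-order expansion giving \(-\frac1{n_p}\sum_s s\,d_s\), and pass \(\sum_s\beta^s e_s\) to the limit by dominated convergence. Your contraction argument for \(|e_s|\le C\eta^s\), based on each period containing at least one strictly contracting \(\phi_1\) step, justifies a bound that the paper only asserts.
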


\begin{proof}
First suppose \(z<y_1\). By the definition of \(T_1(x,z)\), \(A_t=1\) for
all \(t\ge T_1(x,z)\), and \(X_t-y_1\) is geometrically summable. Hence
\[
\begin{split}
F_{\beta,x}(z)
&=
\sum_{t\ge0}\beta^t(-X_t)  \\
&=
-\frac{y_1}{1-\beta}
+
\sum_{t\ge0}\beta^t(-X_t+y_1),
\end{split}
\]
where the second sum has a finite limit as \(\beta\uparrow1\), namely
\(B_F^{1}(x,z)\). Thus
\[
        (1-\beta)F_{\beta,x}(z)\to -y_1,
        \qquad
        F_{\beta,x}(z)-\frac{-y_1}{1-\beta}
        \to B_F^{1}(x,z).
\]
Similarly,
\[
G_{\beta,x}(z)
=
\sum_{t\ge0}\beta^t A_t
=
\frac{1}{1-\beta}
+
\sum_{t\ge0}\beta^t(A_t-1),
\]
and the second sum is finite because \(A_t=1\) eventually. Therefore
\[
        (1-\beta)G_{\beta,x}(z)\to 1,
        \qquad
        G_{\beta,x}(z)-\frac{1}{1-\beta}
        \to B_G^{1}(x,z).
\]
This proves part \textup{(a)}.

Now suppose \(z\in I_p\), and put \(T=T_p(x,z)\). The periodic action tail
gives
\[
\begin{split}
G_{\beta,x}(z)-\frac{q_p/n_p}{1-\beta}
&=
\sum_{t=0}^{T-1}\beta^t
\left(A_t-\frac{q_p}{n_p}\right)
+
\beta^T\sum_{s\ge0}\beta^s d_s^G .
\end{split}
\]
The finite initial term converges to the first term in
\eqref{eq:kf_periodic_bias_G}. For the periodic tail,
\[
        \sum_{s\ge0}\beta^s d_s^G
        =
        \frac{\sum_{s=0}^{n_p-1}\beta^s d_s^G}
             {1-\beta^{n_p}} .
\]
Since \(d^G\) has zero mean over one period,
\[
        \sum_{s=0}^{n_p-1}\beta^s d_s^G
        =
        (\beta-1)\sum_{s=0}^{n_p-1}s\,d_s^G+o(\beta-1),
\]
whereas
\[
        1-\beta^{n_p}
        =
        -n_p(\beta-1)+o(\beta-1).
\]
Therefore
\[
        \sum_{s\ge0}\beta^s d_s^G
        \longrightarrow
        -
        \frac1{n_p}\sum_{s=0}^{n_p-1}s\,d_s^G .
\]
Since \(\beta^T\to1\), it follows that
\[
        G_{\beta,x}(z)-\frac{q_p/n_p}{1-\beta}
        \to
        B_G^p(x,z),
\]
and hence \((1-\beta)G_{\beta,x}(z)\to q_p/n_p\).

For the reward metric, using \(X_{T+s}=\xi_s^p+e_s\),
\[
\begin{split}
F_{\beta,x}(z)-\frac{-\bar x_p}{1-\beta}
&=
\sum_{t=0}^{T-1}\beta^t(-X_t+\bar x_p)
+
\beta^T\sum_{s\ge0}\beta^s(-X_{T+s}+\bar x_p)  \\
&=
\sum_{t=0}^{T-1}\beta^t(-X_t+\bar x_p)
+
\beta^T\sum_{s\ge0}\beta^s d_s^F
-
\beta^T\sum_{s\ge0}\beta^s e_s .
\end{split}
\]
The finite initial term converges to its value at \(\beta=1\). The error term
converges by dominated convergence, since \(\sum_{s\ge0}|e_s|<\infty\):
\[
        \sum_{s\ge0}\beta^s e_s
        \longrightarrow
        \sum_{s\ge0}e_s .
\]
The periodic zero-mean term is handled as above:
\[
        \sum_{s\ge0}\beta^s d_s^F
        =
        \frac{\sum_{s=0}^{n_p-1}\beta^s d_s^F}
             {1-\beta^{n_p}}
        \longrightarrow
        -
        \frac1{n_p}\sum_{s=0}^{n_p-1}s\,d_s^F .
\]
Since \(\beta^T\to1\), we obtain
\[
        F_{\beta,x}(z)-\frac{-\bar x_p}{1-\beta}
        \to
        B_F^p(x,z),
\]
and hence \((1-\beta)F_{\beta,x}(z)\to-\bar x_p\). This proves part
\textup{(b)}.
\end{proof}

\begin{proposition}[Regular average marginal metrics and MP ratio]
\label{pro:kf_avg_marginals}
On the regular threshold regions covered by
Proposition~\ref{pro:kf_avg_gains}, the average marginal reward and resource
metrics generated by the bias metrics are
\[
        f_x(z)
        =
        h_F(\phi_1(x),z)-h_F(\phi_0(x),z),
\]
and
\[
        g_x(z)
        =
        1+h_G(\phi_1(x),z)-h_G(\phi_0(x),z).
\]
Consequently, for regular state values \(x\in\mathcal R_{\rm K}\) with
\(g_x(x)>0\), the regular average MP index is
\[
        m_{\rm reg}(x)
        \triangleq
        \frac{f_x(x)}{g_x(x)}
        =
        \frac{h_F(\phi_1(x),x)-h_F(\phi_0(x),x)}
             {1+h_G(\phi_1(x),x)-h_G(\phi_0(x),x)} .
\]
\end{proposition}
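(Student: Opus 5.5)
The plan is to specialize the general limiting formulas \eqref{eq:avg-f-from-hF}--\eqref{eq:avg-g-from-hG} to the Kalman-filter data and then take the diagonal ratio. In this model \(r(x,a)=-x\) does not depend on the action, so \(\Delta r(x)=0\). Also \(c(x,a)=a\), so \(\Delta c(x)=1\). Both transition kernels are Dirac: \(\kappa_x^a=\delta_{\phi_a(x)}\). Hence, for any threshold-dependent bias function \(h(\cdot,z)\),
\[
\Delta_\kappa h(\cdot,z)(x)=h(\phi_1(x),z)-h(\phi_0(x),z).
\]
Substituting \(h_F\) and \(h_G\) from Proposition~\ref{pro:kf_avg_gains} then gives the displayed formulas for \(f_x(z)\) and \(g_x(z)\).

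To make sure these really are the vanishing-discount limits, I would check them from the discounted side. By Dirac transitions, the discounted marginal metrics are
\[
f_{\beta,x}(z)=\beta\bigl[F_{\beta,\phi_1(x)}(z)-F_{\beta,\phi_0(x)}(z)\bigr],
\qquad
g_{\beta,x}(z)=1+\beta\bigl[G_{\beta,\phi_1(x)}(z)-G_{\beta,\phi_0(x)}(z)\bigr].
\]
Proposition~\ref{pro:kf_avg_gains} shows that the gains \(F(z)\) and \(G(z)\) do not depend on the initial state on each regular threshold region. So I can subtract \(F(z)/(1-\beta)\) from both terms inside each bracket; the divergent parts cancel, and each bracket becomes a difference of the centered biases \(H^F_{\beta,z}\) and \(H^G_{\beta,z}\) evaluated at \(\phi_1(x)\) and \(\phi_0(x)\). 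The same proposition, applied at the two initial states \(\phi_1(x)\) and \(\phi_0(x)\), gives convergence to \(h_F\) and \(h_G\) there, and \(\beta\to1\). Transition compatibility in Assumption~\ref{ass:avg-limits}\textup{(i)} is automatic here because the kernels are point evaluations.

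The one step that needs care is that the threshold \(z\) is held fixed while the initial state changes. The bias limits in Proposition~\ref{pro:kf_avg_gains} hold for \emph{every} initial state once \(z\) lies in a regular region (\(z<y_1\) or \(z\in I_p\)). Regularity is therefore a condition on \(z\) alone, and the states \(\phi_0(x)\) and \(\phi_1(x)\) need no regularity of their own. For the MP ratio I take \(z=x\) with \(x\in\mathcal R_{\rm K}\), which is a regular threshold value. Given the assumed \(g_x(x)>0\), dividing yields \(m_{\rm reg}(x)\).

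I expect the main obstacle to be essentially bookkeeping: confirming that the centering constant \(F(z)/(1-\beta)\) is common to both initial states \(\phi_0(x)\) and \(\phi_1(x)\). This is exactly the state-independence of the gains established in Proposition~\ref{pro:kf_avg_gains}.
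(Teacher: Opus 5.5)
Your proposal is correct and follows the paper's proof: you set \(\Delta r\equiv0\) and \(\Delta c\equiv1\), substitute the Dirac kernels \(\kappa_x^a=\delta_{\phi_a(x)}\) into \eqref{eq:avg-f-from-hF}--\eqref{eq:avg-g-from-hG}, and take the diagonal ratio at \(z=x\in\mathcal R_{\rm K}\), which is a regular threshold value. Your extra discounted-side check is the model-specific case of the general computation the paper records after Assumption~\ref{ass:avg-limits}: the common centering \(F(z)/(1-\beta)\), \(G(z)/(1-\beta)\) cancels, and the biases converge pointwise at \(\phi_0(x)\) and \(\phi_1(x)\). It is harmless but not needed.
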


\begin{proof}
For the scalar Kalman-filter bandit, \(r(x,a)=-x\) and \(c(x,a)=a\). Hence
\[
        r(x,1)-r(x,0)=0,
        \qquad
        c(x,1)-c(x,0)=1 .
\]
The transition kernels are deterministic: choosing action \(1\) sends \(x\) to
\(\phi_1(x)\), while choosing action \(0\) sends \(x\) to \(\phi_0(x)\).
Therefore the general average marginal formulas
\eqref{eq:avg-f-from-hF}--\eqref{eq:avg-g-from-hG} give the stated expressions for 
\(f_x(z)\)
and
\(g_x(z)\).
The formula for \(m_{\rm reg}(x)\) is the diagonal MP ratio on regular states.
\end{proof}

\subsubsection{\textup{(PCLI1)}: marginal-resource positivity}
\label{ss:kf-pcli1}

The first average PCL condition requires positivity of the limiting marginal
resource metric off the exceptional diagonal. The discounted analysis gives a
strong positivity result for each fixed \(\beta<1\), but the lower bound
degenerates as \(\beta\uparrow1\), so an additional Abelian-mean argument is
needed in the average limit.

For \(x\in[0,\infty)\) and \(z\in\mathbb R\), let
\[
        a(x,z)\triangleq 1\sigma(\phi_1(x)\mid z),
        \qquad
        b(x,z)\triangleq 0\sigma(\phi_0(x)\mid z),
\]
where \(\sigma(\cdot\mid z)\) denotes the threshold itinerary in
\cite{danceSi19}. Define
\[
        N_k(x,z)
        \triangleq
        |a(x,z)_{1:k}|_1-|b(x,z)_{1:k}|_1,
        \qquad k\ge1 .
\]
The proof of \cite[Proposition~18]{danceSi19} gives \(N_k(x,z)\ge0\), and
\begin{equation}
\label{eq:kf-prefix-surplus}
        g_{\beta,x}(z)
        =
        (1-\beta)\sum_{k\ge1}\beta^{k-1}N_k(x,z).
\end{equation}
Thus fixed-discount positivity only gives a lower bound of order \(1-\beta\);
strict average positivity requires an Abelian positivity result.

\begin{lemma}[Diagonal marginal resource limits]
\label{lem:kf_diag_resource}
The following hold.
\begin{enumerate}[label=\textup{(\roman*)},leftmargin=*]
\item If \(x<y_1\), then \(g_{\beta,x}(x)=1\) for every \(\beta\in(0,1)\), and
\(g_x(x)=1\).
\item If \(x\in I_p\) for a Christoffel word \(0p1\), then
\[
        g_{\beta,x}(x)=\frac{1-\beta}{1-\beta^{n_p}},
        \qquad 0<\beta<1,
\]
and \(g_x(x)=1/n_p>0\).
\item If \(x\in {\mathcal E}_{\rm K}\), then \(g_{\beta,x}(x)=1-\beta\), and \(g_x(x)=0\).
\end{enumerate}
\end{lemma}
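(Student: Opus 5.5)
The plan is to compute \(g_{\beta,x}(x)\) directly from the prefix-surplus representation \eqref{eq:kf-prefix-surplus}, i.e.\ to identify the sequence \(N_k(x,x)\) for the diagonal threshold \(z=x\). The limits \(g_x(x)\) then follow by letting \(\beta\uparrow1\), using the marginal-metric convergence \(g_{\beta,x}(z)\to g_x(z)\) derived after Assumption~\ref{ass:avg-limits}. Alternatively, I would compute \(g_{\beta,x}(x)=1+\beta[G_{\beta,\phi_1(x)}(x)-G_{\beta,\phi_0(x)}(x)]\) by comparing the two trajectories. The key observation is that at the diagonal \(z=x\), the two continuations from \(x\) are both \(z\)-threshold trajectories, started from \(\phi_1(x)\) and \(\phi_0(x)\) respectively. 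These are the active and passive branches at the threshold point, and their itineraries \(a(x,x)=1\sigma(\phi_1(x)\mid x)\) and \(b(x,x)=0\sigma(\phi_0(x)\mid x)\) are the two one-sided itineraries of the point \(x\) in the symbolic dynamics of \cite{danceSi19}.

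\textbf{Case (i), \(x<y_1\).} Here \(\phi_1(x)>x\), and also \(\phi_0(x)=x+1>x\). Since \(\phi_1\) maps \((x,\infty)\) into \((x,y_1]\subset(x,\infty)\) for \(x<y_1\), both continuations are active forever after the first step. Hence \(a=1^\infty\) and \(b=01^\infty\), so \(N_k=1\) for all \(k\). Then \eqref{eq:kf-prefix-surplus} gives \(g_{\beta,x}(x)=(1-\beta)\sum_k\beta^{k-1}=1\).

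\textbf{Case (ii), \(x\in I_p\).} I would invoke \cite[Theorem~12, Corollary~13]{danceSi19} to identify the two itineraries as the two cyclic readings of the Christoffel word, namely \(a=(10p)^\infty\) and \(b=(01p)^\infty\) up to the standard conjugacy. I would then use the Christoffel-word fact that \(10p\) and \(01p\) differ only by a swap of the first two letters. This gives \(N_k=1\) when \(k\equiv1\pmod{n_p}\) and \(N_k=0\) otherwise. Therefore \(g_{\beta,x}(x)=(1-\beta)\sum_{j\ge0}\beta^{jn_p}=(1-\beta)/(1-\beta^{n_p})\), with limit \(1/n_p\).

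\textbf{Case (iii), \(x\in\mathcal E_{\rm K}\).} The itineraries are the two infinite words whose prefix counts differ only at \(k=1\): \(a=10p\cdots\) versus \(b=01p\cdots\) for \(x=y_{10p}\) (the boundary point, where the periodicity is broken), Sturmian aperiodic ones for \(x=y_s\), and \(1^\infty\) versus \(01^\infty\) with the correct tail at \(y_1\). In each of these, \(N_1=1\) and \(N_k=0\) for \(k\ge2\). This gives \(g_{\beta,x}(x)=1-\beta\to0\).

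The main obstacle is pinning down exactly which itineraries occur at the diagonal in cases (ii) and (iii). In particular, I must show that after the first letter, the counts of ones in \(a\) and \(b\) agree for all prefixes of length \(k\ge2\) in the exceptional case, but drift apart once per period in the regular case. I would extract this from the balancedness of Christoffel/Sturmian words and from the orbit characterization in \cite[Proposition~18]{danceSi19}. That proposition already yields \(N_k\in\{0,1\}\), so it remains to determine when \(N_k=1\).
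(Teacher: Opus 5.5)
Your route is the paper's: it also reads the diagonal continuations off \cite[Corollary~13]{danceSi19} and evaluates the prefix surplus in \eqref{eq:kf-prefix-surplus}, and your cases (i) and (ii) agree with it. Two small points there. In (i), \(\phi_1\) does not map \((x,\infty)\) into \((x,y_1]\): its image is \((\phi_1(x),1/\theta_1)\), and \(1/\theta_1>y_1\). Still, monotonicity and \(\phi_1(x)>x\) give \(\phi_1((x,\infty))\subset(x,\infty)\), which is all you use. In (ii) you need exactly \(a=(10p)^\infty\), \(b=(01p)^\infty\), not these words up to conjugacy; the corollary does give them exactly.

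Case (iii), however, is wrong as written. The error comes from your opening identification of \(a(x,x)=1\sigma(\phi_1(x)\mid x)\) and \(b(x,x)=0\sigma(\phi_0(x)\mid x)\) with the two one-sided itineraries of \(x\). That identification fails at \(y_1\) and at the Christoffel endpoints \(y_{10p}\). There the active continuation lands exactly on the threshold state \(x\), where the \(z\)-policy is passive, while the right-hand one-sided itinerary is active at that step.

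At \(y_1\) the one-sided pair is \(1^\infty\) versus \(01^\infty\), which is what you list. But that pair gives \(N_k=1\) for all \(k\), exactly as in your case (i), hence \(g_{\beta,y_1}(y_1)=1\). This contradicts both your claim \(N_k=0\) for \(k\ge2\) and the lemma. Since \(\phi_1(y_1)=y_1\) lies on the threshold and is passive, the correct pair is \(a=101^\infty\), \(b=01^\infty\).

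Likewise, at \(y_{10p}\) the one-sided pair \((10p)^\infty\), \((01p)^\infty\) would reproduce the regular value \((1-\beta)/(1-\beta^{n_p})\). In fact the active branch follows \(10p\) back to \(y_{10p}\) after \(n_p\) steps, is forced passive there, and from then on repeats \(b\). This gives \(a=10p(01p)^\infty\), \(b=(01p)^\infty\). At \(y_s\) the pair is \(a=10s\), \(b=01s\).

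In all three cases \(a\) and \(b\) differ only in their first two letters, which is exactly what yields \(N_1=1\) and \(N_k=0\) for \(k\ge2\). These pairs are what \cite[Corollary~13]{danceSi19} supplies (the paper quotes them without the prefixed initial action), so cite it for (iii) just as for (ii). Balancedness together with \(N_k\ge0\) from \cite[Proposition~18]{danceSi19} (the paper takes only this from it, not \(N_k\in\{0,1\}\)) cannot by itself decide whether the surplus recurs once per period or vanishes after \(k=1\). That distinction is the entire content of (ii) versus (iii).
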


\begin{proof}
For \(x<y_1\), \cite[Corollary~13]{danceSi19} gives identical active
continuations after the initial action. For \(x\in I_p\), the same corollary
gives \((1p0)^\infty\) and \((0p1)^\infty\); including the initial action, the
resource sequences are \((01p)^\infty\) and \((10p)^\infty\), whose prefix
surplus equals one once per cycle of length \(n_p\). At \(y_1\), at
Christoffel right endpoints \(y_{10p}\), and at Sturmian points \(y_s\), the
same corollary gives the pairs \(1^\infty\) versus \(01^\infty\),
\((1p0)^\infty\) versus \(0p(01p)^\infty\), and \(1s\) versus \(0s\), yielding
\(g_{\beta,x}(x)=1-\beta\).
\end{proof}

\begin{lemma}[Ces\`aro-to-Abel reduction for prefix surplus]
\label{lem:kf-cesaro-abel}
Let \(N_k\), \(k\ge1\), be nonnegative with \(N_k\le k\). If
\(\bar N\triangleq\lim_{T\to\infty}T^{-1}\sum_{k=1}^T N_k\) exists and is
finite, then
\[
        \lim_{\beta\uparrow1}
        (1-\beta)\sum_{k\ge1}\beta^{k-1}N_k
        =
        \bar N .
\]
\end{lemma}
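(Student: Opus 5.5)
This is the classical Abelian theorem (Frobenius): Cesàro summability implies Abel summability with the same value. I would reduce it to a statement about partial sums via summation by parts. Set \(S_T\triangleq\sum_{k=1}^T N_k\), with \(S_0=0\). The bound \(0\le N_k\le k\) gives \(S_T\le T(T+1)/2\). Hence for \(\beta\in(0,1)\) the series \(\sum_k\beta^{k-1}N_k\) and \(\sum_T\beta^{T-1}S_T\) converge absolutely, and \(\beta^{T}S_T\to0\).

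The first step is the Abel summation identity
\[
\sum_{k\ge1}\beta^{k-1}N_k=\sum_{T\ge1}\beta^{T-1}(S_T-S_{T-1})=(1-\beta)\sum_{T\ge1}\beta^{T-1}S_T,
\]
where the boundary term vanishes by the growth bound. Writing \(S_T=T\bar N+T\varepsilon_T\) with \(\varepsilon_T\to0\), and using \((1-\beta)^2\sum_{T\ge1}T\beta^{T-1}=1\), I obtain
\[
(1-\beta)\sum_{k\ge1}\beta^{k-1}N_k-\bar N=(1-\beta)^2\sum_{T\ge1}T\beta^{T-1}\varepsilon_T .
\]

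The second step is the standard \(\epsilon\)-split. Given \(\epsilon>0\), choose \(T_0\) such that \(|\varepsilon_T|\le\epsilon\) for \(T>T_0\). The tail with \(T>T_0\) is bounded by \(\epsilon(1-\beta)^2\sum_T T\beta^{T-1}=\epsilon\). The head with \(T\le T_0\) is a finite sum bounded by \((1-\beta)^2\sum_{T\le T_0}T|\varepsilon_T|\), which tends to zero as \(\beta\uparrow1\); here \(|\varepsilon_T|\) is finite because \(S_T\) is finite. Taking \(\limsup_{\beta\uparrow1}\) and then letting \(\epsilon\downarrow0\) gives the claim.

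The only delicate point is justifying the summation by parts and the rearrangements for fixed \(\beta<1\). This is where the hypotheses \(N_k\ge0\) and \(N_k\le k\) are used, since they give polynomial growth and hence absolute convergence. Beyond that, the argument is routine.
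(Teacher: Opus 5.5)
Your proof is correct and follows essentially the same route as the paper. Both use summation by parts to write the Abel mean as a weighted average of the Ces\`aro means \(S_T/T\), with weights \((1-\beta)^2T\beta^{T-1}\) summing to one, and then split into a head that vanishes as \(\beta\uparrow1\) and a tail bounded by \(\epsilon\). Your explicit justification of the summation by parts via \(S_T\le T(T+1)/2\) fills in a detail the paper leaves implicit.
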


\begin{proof}
Let \(S_T=\sum_{k=1}^T N_k\) and \(A_T=S_T/T\). Summation by parts gives
\[
(1-\beta)\sum_{k\ge1}\beta^{k-1}N_k
=
(1-\beta)^2\sum_{T\ge1}T A_T\beta^{T-1}.
\]
The weights \(w_T(\beta)\triangleq(1-\beta)^2T\beta^{T-1}\) are
nonnegative and satisfy
\[
        \sum_{T\ge1}w_T(\beta)=1 .
\]
Moreover, for every fixed \(M<\infty\),
\[
        \sum_{T=1}^M w_T(\beta)
        \le
        (1-\beta)^2\frac{M(M+1)}2
        \longrightarrow 0,
        \qquad \beta\uparrow1 .
\]
Thus the weights put asymptotically no mass on any fixed finite initial
segment. Since \(A_T\to\bar N\), for every \(\varepsilon>0\) there is
\(M\) such that \(|A_T-\bar N|\le\varepsilon\) for \(T>M\). Hence
\[
\left|
\sum_{T\ge1}w_T(\beta)A_T-\bar N
\right|
\le
\sum_{T=1}^M w_T(\beta)|A_T-\bar N|
+
\varepsilon,
\]
and the first term tends to zero as \(\beta\uparrow1\). Therefore the
weighted averages converge to \(\bar N\).
\end{proof}

\begin{conjecture}[\textup{(PCLI1)} completion for the Kalman-filter bandit]
\label{con:kf-pcli1}
For every \(x\in[0,\infty)\) and \(z\in\mathbb R\) with \(z\ne x\), the
prefix-surplus sequence has a strictly positive Ces\`aro mean:
\[
        \bar N(x,z)
        \triangleq
        \lim_{T\to\infty}\frac1T\sum_{k=1}^T N_k(x,z)
        >0 .
\]
Consequently, \eqref{eq:kf-prefix-surplus} has the positive Abelian limit
\(\bar N(x,z)\), so \(g_x(z)>0\) whenever \(z\ne x\). At exceptional diagonal
states \(x\in {\mathcal E}_{\rm K}\), Lemma~\ref{lem:kf_diag_resource} gives
\(g_x(x)=0\). The corresponding condition \(f_x(x)=0\) follows once the
normalized marginal-reward limit in Conjecture~\ref{con:kf-pcli2} is finite,
since then \(f_{\beta,x}(x)=(1-\beta)m_\beta(x)\to0\).
\end{conjecture}

\subsubsection{\textup{(PCLI2)}: the MP index and its exceptional extension}
\label{ss:kf-pcli2}

\begin{proposition}[Regular diagonal MP-index limits]
\label{pro:kf_avg_diag}
The discounted diagonal MP indices converge locally uniformly on regular
compact threshold regions to the regular average MP ratio \(m_{\rm reg}\)
defined in Proposition~\ref{pro:kf_avg_marginals}. More explicitly:
\begin{enumerate}[label=\textup{(\roman*)},leftmargin=*]
\item On compact subintervals of \([0,y_1)\), \(g_{\beta,x}(x)=1\), and
\(f_{\beta,x}(x)\) converges locally uniformly. Hence \(m_\beta(x)\) converges
to
\[
        m_{\rm reg}(x)
        =
        \sum_{n\ge1}
        \Bigl[
        \phi_1^{\,n-1}(\phi_0(x))
        -
        \phi_1^{\,n}(x)
        \Bigr],
        \qquad 0\le x<y_1 .
\]
\item If \(K\subset I_p\) is compact, then
\[
        g_{\beta,x}(x)=\frac{1-\beta}{1-\beta^{n_p}},
        \qquad x\in K,
\]
so \(g_{\beta,x}(x)\to1/n_p\) uniformly on \(K\).
\item For the same \(K\), there exists a continuous \(f_p\) such that
\(\sup_{x\in K}|f_{\beta,x}(x)-f_p(x)|\to0\). Consequently,
\[
        \sup_{x\in K}|m_\beta(x)-m_{\rm reg}(x)|\to0,
        \qquad
        m_{\rm reg}(x)
        = n_p f_p(x).
\]
\end{enumerate}
\end{proposition}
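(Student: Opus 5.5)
The plan treats the two regular regions separately and reduces both to explicit itinerary computations. In each case the diagonal marginal metrics \(f_{\beta,x}(x)\) and \(g_{\beta,x}(x)\) are written as discounted sums along the two deterministic trajectories started from \(\phi_1(x)\) and \(\phi_0(x)\) under the \(x\)-threshold policy. Lemma~\ref{lem:kf_diag_resource} and \cite[Corollary~13]{danceSi19} identify these itineraries. The marginal reward with \(r(x,a)=-x\) is
\[
f_{\beta,x}(x)=\beta\sum_{t\ge0}\beta^t\bigl(X^0_t-X^1_t\bigr),
\]
where \(X^1_t\) and \(X^0_t\) are the continuations after the initial active and passive actions.

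\emph{Part (i).} For \(0\le x<y_1\), Corollary~13 gives identical active continuations. The first trajectory is therefore \(\phi_1^{n}(x)\), and the second is \(\phi_1^{n-1}(\phi_0(x))\), shifted by one step. Hence \(g_{\beta,x}(x)=1\) (Lemma~\ref{lem:kf_diag_resource}(i)) and
\[
f_{\beta,x}(x)=\sum_{n\ge1}\beta^{n}\bigl[\phi_1^{n-1}(\phi_0(x))-\phi_1^{n}(x)\bigr].
\]
Since \(\phi_1\) is a contraction near \(y_1\) with a uniform rate on compacts, the summands are bounded by \(C\eta^{n}\), with \(C\) and \(\eta<1\) uniform for \(x\) in a compact subset of \([0,y_1)\). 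The Weierstrass M-test then gives uniform convergence to the \(\beta=1\) series, which is continuous. So \(m_\beta\to m_{\rm reg}\) uniformly, and the value agrees with \(f_x(x)/g_x(x)\) from Proposition~\ref{pro:kf_avg_marginals}.

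\emph{Part (ii).} This is Lemma~\ref{lem:kf_diag_resource}(ii), and the uniform limit \((1-\beta)/(1-\beta^{n_p})\to1/n_p\) is independent of \(x\).

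\emph{Part (iii).} For \(x\in K\subset I_p\), the itineraries \((1p0)^\infty\) and \((0p1)^\infty\) are fixed for all \(x\in I_p\), with no transient. The continuations are
\[
X^1_t=\phi_{w_{1:t}}(\phi_1(x)),\qquad X^0_t=\phi_{w'_{1:t}}(\phi_0(x))
\]
for fixed periodic words \(w\) and \(w'\). The difference \(X^0_t-X^1_t\) splits into three parts. The first is the difference of the two limiting periodic orbits, a fixed periodic sequence \(\delta_t\). It is independent of \(x\), and its period sum is zero because both orbits traverse the same cyclic orbit \(\{x_k^p\}\), only with different phases. The other two are the geometrically decaying errors \(e^0_t(x)\) and \(e^1_t(x)\).

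The periodic zero-mean part is handled as in Proposition~\ref{pro:kf_avg_gains}: \(\sum\beta^t\delta_t\) converges to \(-n_p^{-1}\sum_s s\,\delta_s\), uniformly since it does not depend on \(x\). The error parts are compositions of Möbius maps, and period-\(n_p\) compositions of them contract with a rate \(\eta<1\). After the change of variables \(x\mapsto x/(x+1)\) used in \cite{danceSi19}, this rate is uniform on the compact \(K\), and the error is continuous in \(x\). The M-test then gives uniform convergence of \(\sum\beta^t e_t(x)\) to a continuous limit. Setting \(f_p\) equal to \(\beta=1\) times the sum of these limits yields a continuous \(f_p\) with \(\sup_K|f_{\beta,x}(x)-f_p(x)|\to0\).

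Finally, since \(g_{\beta,x}(x)\) is bounded away from zero uniformly on \(K\), the quotient \(m_\beta=f_{\beta,x}(x)/g_{\beta,x}(x)\) converges uniformly to \(n_pf_p\). This limit equals \(m_{\rm reg}\), because \(f_p=f_x(x)\) and \(1/n_p=g_x(x)\) by Proposition~\ref{pro:kf_avg_marginals}.

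\emph{Main obstacle.} The hard step is the uniformity in (iii), namely a contraction rate and error bound valid simultaneously for all \(x\in K\). I would obtain this from strict contractivity of the transformed maps \(\phi_w\) on a compact forward-invariant interval containing \(\phi_0(K)\cup\phi_1(K)\) and the periodic orbit. With that in hand, \(|e_t(x)|\le C\eta^t\) holds with constants depending only on \(K\) and \(p\).
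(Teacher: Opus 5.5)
Your proposal is correct and follows essentially the paper's route. In (i) both continuations are all-active and converge geometrically to \(y_1\). In (iii) the summand splits into a zero-mean periodic part, because \(01p\) and \(10p\) are both cyclic shifts of \(0p1\), plus an exponentially decaying transient, and its Abelian limit gives \(f_p\); you just make explicit the limit \(-n_p^{-1}\sum_s s\,\delta_s\) and the uniform error bound that the paper's sketch leaves implicit. Your factor \(\beta^n\) versus the paper's \(\beta^{n-1}\) is immaterial in the limit, and the change of variables is not needed: \(\phi_0\) is \(1\)-Lipschitz and \(\phi_1\) is \((1+\theta_1)^{-2}\)-Lipschitz on \([0,\infty)\), so directly \(|e_t(x)|\le C\,(1+\theta_1)^{-2q_p\lfloor t/n_p\rfloor}\) uniformly on \(K\).
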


\begin{proof}
If \(x<y_1\), the active-start and passive-start continuations are eventually
governed by the same active itinerary, and
\[
        f_{\beta,x}(x)
        =
        \sum_{n\ge1}\beta^{n-1}
        \Bigl[
        \phi_1^{\,n-1}(\phi_0(x))
        -
        \phi_1^{\,n}(x)
        \Bigr].
\]
The two trajectories converge geometrically to \(y_1\), uniformly on compact
subintervals. For \(x\in I_p\), Lemma~\ref{lem:kf_diag_resource} gives the
resource formula. The marginal reward can be written as
\[
f_{\beta,x}(x)
=
\sum_{n\ge1}\beta^{n-1}
\Bigl(
\phi_{((01p)^\infty)_{1:n}}(x)
-
\phi_{((10p)^\infty)_{1:n}}(x)
\Bigr).
\]
The conjugate words \(01p\) and \(10p\) generate periodic orbits with the same
cycle values and mean; hence the summand is an asymptotically periodic
zero-mean sequence plus an exponentially decaying transient. Abelian
convergence gives \(f_p\), and division by the resource limit gives
\(m_{\rm reg}=n_p f_p\).
\end{proof}

For \(x\in {\mathcal E}_{\rm K}\), Lemma~\ref{lem:kf_diag_resource} gives
\(g_{\beta,x}(x)=1-\beta\), hence \(m_\beta(x)=f_{\beta,x}(x)/(1-\beta)\). At a
Sturmian point \(x=y_s\), where \(0s\) is a Sturmian \({\mathcal M}\)-word, the required
normalized limit is
\begin{equation}
\label{eq:kf_sturmian_abelian_problem}
\lim_{\beta\uparrow1}
\frac{1}{1-\beta}
\sum_{n\ge1}\beta^{n-1}
\Bigl(
\phi_{(01s)_{1:n}}(y_s)
-
\phi_{(10s)_{1:n}}(y_s)
\Bigr).
\end{equation}
This is not a pure symbol-count functional; it involves nonlinear M\"obius
orbit values. The monotonicity of the discounted MP indices in
\cite[\S3.3.2]{danceSi19} suggests a squeeze proof through regular
Christoffel approximants.

\begin{conjecture}[\textup{(PCLI2)} completion for the Kalman-filter bandit]
\label{con:kf-pcli2}
The regular average MP index \(m_{\rm reg}\) admits a finite-valued,
nondecreasing, continuous extension \(m\colon[0,\infty)\to\mathbb R\), with
\(m(x)\to\infty\) as \(x\to\infty\). Moreover, \(m_\beta\to m\) locally
uniformly on compact subsets of \(\mathcal R_{\rm K}\), and
\(m_\beta(x)\to m(x)\) for every \(x\in {\mathcal E}_{\rm K}\). Equivalently,
\(f_{\beta,x}(x)/(1-\beta)\to m(x)\) on \({\mathcal E}_{\rm K}\). At a Sturmian point
\(y_s\), this value is the Abelian limit in
\eqref{eq:kf_sturmian_abelian_problem} and agrees with Christoffel
approximation limits from both sides. If
\[
        y_{01l^{(n)}}\uparrow y_s,
        \qquad
        y_{10u^{(n)}}\downarrow y_s
\]
are the Christoffel approximants in \cite[Lemma~55]{danceSi19}, then
\[
m(y_s)
=
\lim_{n\to\infty}m_{\rm reg}(y_{01l^{(n)}})
=
\lim_{n\to\infty}m_{\rm reg}(y_{10u^{(n)}}^-),
\]
where the last term is the left limit from the corresponding Christoffel
interval.
\end{conjecture}

\subsubsection{\textup{(PCLI3)}: threshold and marginal PCL identities}
\label{ss:kf-pcli3}

We finally consider the PCL identities. The regular metric propositions above
identify the gain, bias, marginal, and regular diagonal MP objects on
periodic-itinerary regions. On finite unions of such regular regions, the
discounted Radon--Nikodym PCL identity and the discounted marginal
integration-by-parts identity pass to the average limit as in the preceding
examples. The unresolved part is the singular symbolic-itinerary set, where
threshold atoms and exceptional diagonal ratios must be governed by the same
continuous extension \(m\).

\begin{conjecture}[\textup{(PCLI3)} completion for the Kalman-filter bandit]
\label{con:kf-pcli3}
The regular threshold gain functions \(F\) and \(G\), defined on the regular
threshold regions above, admit c\`adl\`ag locally bounded-variation extensions
to \(\mathbb R\), with \(G\) nonincreasing and nonconstant. For each \(x\), the limiting marginal metrics
\(f_x\) and \(g_x\) are finite-valued and c\`adl\`ag in the threshold variable.
With the extension \(m\) of Conjecture~\ref{con:kf-pcli2},
\[
        F(z_2)-F(z_1)
        =
        \int_{(z_1,z_2]}m(z)\,\mathrm dG(z),
        \qquad z_1<z_2,
\]
and
\[
f_x(z)-m(z)g_x(z)
=
\begin{cases}
\displaystyle
\int_{(z,x)}g_x(y)\,\mathrm dm(y), & z<x,\\[2.5ex]
\displaystyle
-\int_{[x,z]}g_x(y)\,\mathrm dm(y), & x\le z .
\end{cases}
\]
\end{conjecture}

\begin{remark}[Status of the Kalman-filter PCL conditions]
\label{re:kf_missing_piece}
The regular periodic-itinerary part is proved: gains are cycle means, the
limiting diagonal marginal resource is positive, and the discounted MP indices
converge locally uniformly. At exceptional states, however,
\(g_{\beta,x}(x)=1-\beta\), so the limiting diagonal ratio is a removable
\(0/0\) singularity. The remaining work is precisely
Conjectures~\ref{con:kf-pcli1}--\ref{con:kf-pcli3}: off-diagonal average
marginal-resource positivity, continuous extension of the regular MP index to
exceptional states, and the singular threshold and marginal PCL identities.
This appears to require Abelian summability combined with the
Christoffel/Sturmian and M\"obius-orbit analysis of \cite{danceSi19}.
\end{remark}

\begin{proposition}[Conditional average-indexability for the Kalman-filter bandit]
\label{pro:kf_avg_conditional}
Suppose that the long-run average \(\lambda\)-price problems satisfy the
optimal-action characterization required in Assumption~\ref{ass:excharcosdp}
for the limiting gain and bias metrics of the Kalman-filter model. Suppose
also that Conjectures~\ref{con:kf-pcli1}--\ref{con:kf-pcli3} hold. Then the
scalar Kalman-filter project is threshold-indexable under the long-run average
criterion. Its Whittle index is \(m_{\rm reg}\) on \(\mathcal R_{\rm K}\) and
the continuous extension \(m\) on \({\mathcal E}_{\rm K}\).
\end{proposition}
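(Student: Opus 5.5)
The plan is to apply the verification part of Theorem~\ref{the:pcliii}\textup{(a)} directly to the average metrics, and not to go through Theorem~\ref{the:avg-pcli-transfer}. The transfer theorem assumes Assumption~\ref{ass:avg-acoe}, which Lemma~\ref{lem:avg-lip-relative} does not supply on the half-line. Here the optimal-action structure is supplied instead by hypothesis. Concretely, I take \(\Stsp=[0,\infty)\), \(\Zsp=\mathbb R\) (so \(\infty\notin\Zsp\)), \(F_x\equiv F\), \(G_x\equiv G\), and \(f_x,g_x\) given by \eqref{eq:avg-f-from-hF}--\eqref{eq:avg-g-from-hG}. With these choices, Proposition~\ref{pro:kf_avg_marginals} shows that they are the bias Q-value differences for threshold continuations. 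The exceptional set is \(\Exsp=\mathcal E_{\rm K}\), and the regular set \(\Rgsp\) is \(\mathcal R_{\rm K}\). The first check is that these two sets partition \([0,\infty)\), which I would take from the Dance--Si classification of states as regular Christoffel intervals, \(y_1\), right endpoints \(y_{10p}\), and Sturmian points.

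The conditions are then checked one at a time. For \textup{(PCLI1)}, Conjecture~\ref{con:kf-pcli1} gives \(g_x(z)>0\) for \(z\neq x\). On the diagonal, Lemma~\ref{lem:kf_diag_resource} gives \(g_x(x)=1\) or \(1/n_p\) on \(\mathcal R_{\rm K}\) and \(g_x(x)=0\) on \(\mathcal E_{\rm K}\). To get \(f_x(x)=0\) on \(\mathcal E_{\rm K}\), I use \(f_{\beta,x}(x)=(1-\beta)m_\beta(x)\), and this tends to \(0\) because \(m_\beta(x)\) converges to a finite value by Conjecture~\ref{con:kf-pcli2}. For \textup{(PCLI2)}, Conjecture~\ref{con:kf-pcli2} supplies a continuous nondecreasing \(m\) with \(m\to\infty\), which is exactly what is needed since \(\infty\notin\Zsp\). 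The identity \(m=m_{\rm reg}=f_x(x)/g_x(x)\) on \(\mathcal R_{\rm K}\) comes from Proposition~\ref{pro:kf_avg_diag} combined with the uniqueness of limits of \(m_\beta\). For \textup{(PCLI3b)}, together with \textup{(PCLI3a)} and the c\`adl\`ag regularity in Assumption~\ref{ass:regfg}, I would take these directly from Conjecture~\ref{con:kf-pcli3}, again with finite thresholds only.

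With these in hand, Theorem~\ref{the:pcliii}\textup{(a)} gives threshold-indexability with Whittle index \(m\). This index equals \(m_{\rm reg}\) on \(\mathcal R_{\rm K}\) and the extension on \(\mathcal E_{\rm K}\). As a by-product, the optimal threshold maps are the generalized inverses of \(m\).

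The main obstacle is matching the hypothesis on average optimality to what Theorem~\ref{the:pcliii} needs. Its proof uses part \textup{(iii)} of Assumption~\ref{ass:excharcosdp}, namely that a threshold policy whose own bias Q-differences have the right signs is optimal, and that \(v_\lambda^*\) can then be taken to be \(v_{\lambda,x}(z)\). I would read the stated hypothesis as granting this for threshold policies with the limiting gain and bias pairs \(F-\lambda G\) and \(h_F-\lambda h_G\). These pairs satisfy the Poisson equations \eqref{eq:avg-threshold-poisson-F}--\eqref{eq:avg-threshold-poisson-G}, which follow from Proposition~\ref{pro:kf_avg_gains} on regular thresholds and from the conjectured extensions elsewhere. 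One point needs care here: Proposition~\ref{pro:kf_avg_gains} proves the bias limits only for regular \(z\). At exceptional thresholds I would rely on Conjecture~\ref{con:kf-pcli3}, which asserts that \(f_x\) and \(g_x\) are finite and c\`adl\`ag, so these thresholds are covered. Even so, a generalized inverse \(\zeta(\lambda)\) may land on an exceptional point, and at such a threshold I must assume the Q-value interpretation survives. I would state this explicitly as part of the assumed optimal-action characterization.
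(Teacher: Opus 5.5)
Your proposal is correct and follows the paper's own proof. Like the paper, you apply Theorem~\ref{the:pcliii}(a) directly rather than the transfer theorem: the hypothesis supplies Assumption~\ref{ass:excharcosdp}, Conjecture~\ref{con:kf-pcli1} together with Lemma~\ref{lem:kf_diag_resource} gives \textup{(PCLI1)} with $\Exsp=\mathcal E_{\rm K}$, Conjecture~\ref{con:kf-pcli2} gives \textup{(PCLI2)}, and Conjecture~\ref{con:kf-pcli3} gives the regularity requirements and \textup{(PCLI3)}.

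You raise two further points: that $\mathcal R_{\rm K}$ and $\mathcal E_{\rm K}$ must partition $[0,\infty)$, and that a generalized inverse may land on an exceptional threshold, where the bias Q-value interpretation has to be part of the assumed optimal-action characterization. The paper leaves both implicit in its hypotheses; they are not gaps in your argument.
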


\begin{proof}
The first hypothesis gives the action-comparison and optimal-action structure
required by Theorem~\ref{the:pcliii}.
Conjecture~\ref{con:kf-pcli1} and Lemma~\ref{lem:kf_diag_resource} give
\textup{(PCLI1)} with exceptional set \({\mathcal E}_{\rm K}\);
Conjecture~\ref{con:kf-pcli2} gives \textup{(PCLI2)}; and
Conjecture~\ref{con:kf-pcli3} gives the regularity requirements and both
identities in \textup{(PCLI3)}. The conclusion follows from
Theorem~\ref{the:pcliii}.
\end{proof}

Thus the Kalman-filter example transfers cleanly on regular periodic-itinerary
regions and isolates the exceptional symbolic-itinerary limits as the remaining
average-cost indexability task.

\section{Conclusions}
\label{s:concl}

This paper has developed a performance-metric PCL framework for proving
threshold-indexability of binary-action restless bandit projects on real
interval state spaces. The main theorem verifies threshold optimality and
Whittle indexability simultaneously: under marginal-resource positivity and the
marginal integration-by-parts identity, threshold-indexability is equivalent to
the MP-index monotonicity/continuity condition. When these conditions hold, the
Whittle index is the MP index and the optimal threshold maps are its
generalized inverses. The formulation also covers exceptional states, where the
diagonal MP ratio is undefined and the index is supplied by continuous
extension or by a vanishing-discount limit.

For the discrete-time long-run average criterion, the paper gives a
vanishing-discount transfer theorem. Discounted threshold metrics are separated
into gain and bias terms, and marginal metrics, MP indices, and PCL identities
are passed to the limit. A key point is that the limiting marginal identity is
written with integrator \(\mathrm dm\), not \(\mathrm dg_x\). This distinction
is essential in average-reward real-state models where the limiting marginal
resource, as a function of the threshold, need not have locally bounded
variation.

The web-crawling and noisy-channel examples give complete long-run average
threshold-indexability results and recover known Whittle indices through a
common metric-transfer route. For the scalar Kalman-filter bandit, the paper
proves the regular periodic-itinerary part of the average transfer and
identifies the exceptional-state completions required for a full theorem. The
remaining indexability problem is reduced to three explicit metric-limit tasks
corresponding to \textup{(PCLI1)}--\textup{(PCLI3)}: off-diagonal average
marginal-resource positivity, continuous extension of the regular MP index to
exceptional symbolic-itinerary states, and the singular threshold and marginal
PCL identities associated with that extension.

The most immediate continuation is to prove these Kalman-filter exceptional
limits and to establish the average-optimality prerequisites for the associated
noncompact, unbounded-cost price problems. Broader directions include
continuous-time projects, bias-optimality criteria, and multidimensional state
spaces such as covariance-matrix sensor-scheduling models. In those settings,
the performance-metric viewpoint remains promising, but new notions of
structured action regions and marginal PCL identities will be needed.

\section*{Funding}
This work was supported in part by Universidad Carlos III de Madrid through
an internal research program grant and a grant for the acquisition of research
tools. 

\section*{Declaration of competing interest}
The author declares that there are no known competing financial interests
or personal relationships that could have appeared to influence the work
reported in this paper.

\section*{Declaration of generative AI and AI-assisted technologies in the manuscript preparation process}

During the preparation of this work, the author used OpenAI's ChatGPT
to assist with checking the manuscript for consistency, coherence,
clarity, presentation issues, and possible LaTeX or typographical
problems. The tool was not used to generate original mathematical
results, proofs, data, figures, or references. After using this tool,
the author reviewed and edited the content as needed and takes full
responsibility for the content of the publication.

\end{document}